%
%
%
%

\documentclass[11pt]{amsart}

\newif\ifdraft\draftfalse

\usepackage{amssymb, enumerate, xspace, graphicx, url}
\usepackage[latin1]{inputenc}
\usepackage[mathscr]{eucal}
\usepackage[all]{xy}
\SelectTips{cm}{}

\ifdraft\usepackage[notcite, notref]{showkeys}\fi


\numberwithin{subsection}{section}

\allowdisplaybreaks[1]


\newenvironment{enumeratea}
{\begin{enumerate}[\upshape (a)]}
{\end{enumerate}}

\newenvironment{enumeratei}
{\begin{enumerate}[\upshape (i)]}
{\end{enumerate}}

\newtheorem*{namedtheorem}{\theoremname}
\newcommand{\theoremname}{testing}

\newcommand\Fields{\operatorname{Fields}}

\newcommand\Sets{\operatorname{Sets}}

\newcommand\chr{\operatorname{char}}
\newcommand\Ind{\operatorname{Ind}}

\def\et{{\rm \acute e t}}
\def\Et{{\rm \acute E t}}
\def\Sch{{\rm Sch}}
\newcommand\fet{{\rm f\acute et}}

\newcommand\Gal{\operatorname{{\rm Gal}}}

\newcommand\Orb{\operatorname{\bf Orb}}
\newcommand\Spec{\operatorname{Spec}}

\newcommand\ram{\mathrm{ram}}  
\newcommand\unr{\mathrm{unr}}
\newcommand\Kram{K_{\ram}}
\newcommand\Kunr{K_{\unr}}

\newtheorem{theorem}{Theorem}[section]
\newtheorem{thm}{Theorem}[section]
\newtheorem{proposition}[theorem]{Proposition}
\newtheorem{proposition-definition}[theorem]{Proposition-Definition}
\newtheorem{corollary}[theorem]{Corollary}
\newtheorem{lemma}[theorem]{Lemma}

\newtheorem{prop}[thm]{Proposition}

\newtheorem{lem}[thm]{Lemma}

\theoremstyle{definition}
\newtheorem{definition}[theorem]{Definition}

\newtheorem{example}[theorem]{Example}

\newtheorem{remark}[theorem]{Remark}
\newtheorem{question}[theorem]{Question}

\theoremstyle{remark}

\numberwithin{equation}{section}



 \newcommand\cB{\mathcal{B}}
 \newcommand\cD{\mathcal{D}}
 
\newcommand\cG{\mathcal{G}} \newcommand\cH{\mathcal{H}}

\newcommand\cM{\mathcal{M}} \newcommand\cN{\mathcal{N}}
\newcommand\cO{\mathcal{O}}

\newcommand\cU{\mathcal{U}} 
\newcommand\cW{\mathcal{W}} \newcommand\cX{\mathcal{X}}
\newcommand\cY{\mathcal{Y}} 

\renewcommand\AA{\mathbb{A}} 
\newcommand\CC{\mathbb{C}} 
 
\newcommand\GG{\mathbb{G}}

 \newcommand\PP{\mathbb{P}}

 \newcommand\ZZ{\mathbb{Z}}

 \newcommand\bH{\mathbf{H}}

\newcommand\bM{\mathbf{M}} 
 \newcommand\bP{\mathbf{P}}

 \newcommand\bX{\mathbf{X}}
\newcommand\bY{\mathbf{Y}}

\newcommand\rma{\mathrm{a}}

\newcommand\rmm{\mathrm{m}}

 \newcommand\rmt{\mathrm{t}}
 
\newcommand\rmw{\mathrm{w}}

\newcommand\fM{\mathfrak{M}}


\newcommand\arr{\ifinner \to\else\longrightarrow\fi}

\newcommand\arrto{\ifinner\mapsto\else\longmapsto\fi}
\newcommand\larr{\longrightarrow}

\newcommand{\xarr}{\xrightarrow}

\renewcommand\H{\operatorname{H}}

\newcommand\eqdef{\overset{\mathrm{\scriptscriptstyle def}} =}

\def\displaytimes_#1{\mathrel{\mathop{\times}\limits_{#1}}}

\def\displayotimes_#1{\mathrel{\mathop{\bigotimes}\limits_{#1}}}

\newcommand\HOM{\operatorname{HOM}}

\DeclareMathOperator{\Aut}{Aut}
\DeclareMathOperator{\spc}{\mathrm{sp}}

\newcommand\Br{\operatorname{Br}}

\newcommand\ind{\operatorname{ind}}

\newcommand\spec{\operatorname{Spec}}

\newcommand\codim{\operatorname{codim}}

\renewcommand\projlim{\varprojlim}


\newdir{ >}{{}*!/-5pt/@{>}}

\newcommand\double{\mathbin{\rightrightarrows}}

\newcommand\doublelong[2]{\mathbin{\xymatrix{{}\ar@<3pt>[r]^{#1}
\ar@<-3pt>[r]_{#2}&}}}

\newcommand{\underaut}
{\mathop{\underline{\mathrm{Aut}}}\nolimits}

\newlength{\ignora}


\newcommand{\ed}{\operatorname{ed}}
\newcommand{\cd}{\operatorname{cd}}

\newcommand{\dm}{Deligne--Mumford\xspace}
\newcommand{\lci}{local complete intersection\xspace}

\newcounter{steps}

\newcommand{\trdeg}{\operatorname{tr\,deg}}
\newcommand{\mmu}{\boldsymbol{\mu}}

\newcommand{\GL}{\mathrm{GL}}

\newcommand{\PGL}{\mathrm{PGL}}
\newcommand{\Hom}{\mathrm{Hom}}

\newcommand{\gm}{\GG_{\rmm}}

\newcommand{\dr}[1]{(\mspace{-3mu}(#1)\mspace{-3mu})}
\newcommand{\ds}[1]{[\mspace{-2mu}[#1]\mspace{-2mu}]}

\renewcommand{\setminus}{\smallsetminus}


\begin{document}

\title[Essential dimension of moduli of curves] {Essential dimension of 
moduli of curves\\and other algebraic stacks}

\author[Brosnan]{Patrick Brosnan$^\dagger$}

\author[Reichstein]{Zinovy Reichstein$^\dagger$}

\author[Vistoli]{Angelo~Vistoli$^\ddagger$ $\quad \quad \quad \quad \quad$
with an appendix by Najmuddin Fakhruddin}

\address[Brosnan, Reichstein]{Department of Mathematics\\
The University of British Columbia\\
1984 Mathematics Road\\
Vancouver, B.C., Canada V6T 1Z2}

\address[Vistoli]{Scuola Normale Superiore\\Piazza dei Cavalieri 7\\
56126 Pisa\\Italy}

\address[Fakhruddin]{School of Mathematics, 
Tata Institue of Fundamental Research, 
Homi Bhabha Road, Mumbai 400005, India}
\email[Brosnan]{brosnan@math.ubc.ca}
\email[Reichstein]{reichst@math.ubc.ca}
\email[Vistoli]{angelo.vistoli@sns.it}
\email[Fakhruddin]{naf@math.tifr.res.in}

\begin{abstract} 
  In this paper we consider questions of the following type. 
  Let $k$ be a base field and $K/k$ be a field extension. Given 
  a geometric object $X$ over a field $K$ (e.g. a smooth curve of genus $g$)
  what is the least transcendence degree of a field of definition of
  $X$ over the base field $k$?  In other words, how many independent
  parameters are
  needed to define $X$? To study these questions we introduce a notion
  of essential dimension for an algebraic stack. Using the resulting theory,
  we give a complete answer to the question above when the geometric
  objects $X$ are smooth, stable or hyperelliptic curves. The appendix, 
  written by Najmuddin Fakhruddin, answers this question 
  in the case of abelian varieties.
\end{abstract}
\subjclass[2000]{Primary 14A20, 14H10, 14K10}
\keywords{Essential dimension, stack, gerbe, moduli of curves, 
moduli of abelian varieties} 
\thanks{$^\dagger$Supported in part by an NSERC discovery grant}
\thanks{$^\ddagger$Supported in part by the PRIN Project ``Geometria
sulle variet\`a algebriche'', financed by MIUR}

\maketitle

\setcounter{tocdepth}{1}
\tableofcontents

\section{Introduction}
\label{s.intro}

This paper was motivated by the following question. 

\begin{question} \label{q.curves} Let $k$ be a field and $g \ge 0$ 
be an integer.  What is the smallest integer $d$ such that for every
field $K/k$, every smooth curve $X$ of genus $g$ defined over K descends 
to a subfield $k \subset K_0 \subset K$ with $\trdeg_k K_0 \le d$?
\end{question}

Here by ``X descends to $K_0$" we mean that the exists a curve $X_0$ over $K_0$ such that $X$ is $K$-isomorphic to $X_0 \times_{\Spec K_0} \Spec K$.

In order to address this and related questions, we will introduce 
and study the notion of essential dimension for algebraic stacks;
see~\S\ref{s.definition}.  The essential 
dimension $\ed \cX$ of a scheme $\cX$ is simply the dimension 
of $\cX$; on the other 
hand, the essential dimension of the classifying stack $\cB_{k}G$
of an algebraic group $G$ is the essential dimension of $G$ 
in the usual sense; see~\cite{reichstein} or~\cite{bf1}.
The notion of essential dimension of a stack is meant to bridge these
two examples.  The minimal integer $d$ in 
Question~\ref{q.curves} is the essential dimension of the
moduli stack of smooth curves $\cM_g$. We show that
$\ed \cX$ is finite for a broad class of algebraic stacks 
of finite type over a field; see Corollary~\ref{cor.finiteness}.
This class includes all \dm stacks and all quotient stacks 
of the form $\cX = [X/G]$, where $G$ a linear algebraic group.

Our main result is the following theorem.  

\begin{theorem}
\label{thm.curves}
Let $\cM_{g,n}$ (respectively, $\overline{\cM}_{g,n}$) 
be the stacks of $n$-pointed smooth (respectively, stable) algebraic 
curves of genus $g$ over a field $k$ of characteristic $0$. Then
\[ \ed\cM_{g,n} = 
   \begin{cases} 
   2         & \text{if }(g,n)=(0,0)\text{ or } (1,1);\\
   0         & \text{if }(g,n)=(0,1)\text{ or } (0,2);\\
   +\infty   & \text{if }(g,n)=(1,0);\\
   5         & \text{if }(g,n)=(2,0);\\
   3g-3 + n  & \text{otherwise}.
\end{cases}
\]

Moreover for $2g-2+n > 0$ we have $\ed\overline{\cM}_{g,n} = \ed \cM_{g,n}$.
\end{theorem}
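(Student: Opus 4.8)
The inequality $\ed\cM_{g,n}\le\ed\overline{\cM}_{g,n}$ is immediate, since $\cM_{g,n}$ is an open substack of $\overline{\cM}_{g,n}$ and essential dimension cannot grow when one passes to an open substack. For the reverse inequality I would stratify $\overline{\cM}_{g,n}$ by topological type. Write $\overline{\cM}_{g,n}=\coprod_{\Gamma}\cM_{\Gamma}$, the disjoint union over the stable graphs $\Gamma$ of genus $g$ with $n$ legs of the corresponding locally closed strata, with open stratum $\cM_{\Gamma_0}=\cM_{g,n}$. Since a morphism from the spectrum of a field into $\overline{\cM}_{g,n}$ factors through exactly one stratum, $\ed\overline{\cM}_{g,n}=\max_{\Gamma}\ed\cM_{\Gamma}$, so it is enough to prove $\ed\cM_{\Gamma}\le\ed\cM_{g,n}$ for every $\Gamma$. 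The case $\Gamma=\Gamma_0$ is trivial. For $\Gamma\ne\Gamma_0$ the Knudsen clutching construction identifies $\cM_{\Gamma}$ with $\bigl[\prod_v\cM_{g_v,n_v}/\Aut(\Gamma)\bigr]$, where $v$ runs over the vertices of $\Gamma$, the numbers $g_v$ and $n_v$ are the genus and the valence (legs plus half-edges) at $v$, and $\Aut(\Gamma)$ is the finite automorphism group of the decorated graph; each pair $(g_v,n_v)$ is stable, so all the numbers $\ed\cM_{g_v,n_v}$ are furnished by the part of the theorem already proved.

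To bound $\ed\cM_{\Gamma}$ for $\Gamma\ne\Gamma_0$ I would use two facts about the smooth \dm stack $\cM_{\Gamma}$, both coming from the general theory developed earlier in the paper. First, from the clutching description $\dim\cM_{\Gamma}=\sum_v(3g_v-3+n_v)=(3g-3+n)-\#E(\Gamma)$, and the generic residual gerbe of $\cM_{\Gamma}$ is banded by an elementary abelian $2$-group $(\ZZ/2)^{r}$: the automorphism group of a general stable curve of type $\Gamma$ is generated by the involution $-1$ on each elliptic tail and by the flips of the loops sitting at vertices of type $(0,3)$ or $(1,2)$ --- one checks that these extend over a general curve of type $\Gamma$, that no other graph automorphism does, and that all remaining components are general and hence rigid. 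Second, this gerbe is neutral: $\cM_{0,m}$ is a fine moduli space, $\cM_{1,1}$ carries a tautological curve over its generic point (for instance $y^{2}+xy=x^{3}-\tfrac{36}{j-1728}x-\tfrac{1}{j-1728}$ over $k(j)$), and a descent argument then produces a tautological curve of type $\Gamma$ over $k(\cM_{\Gamma})$. The general bound for a stack whose generic residual gerbe is neutral and banded by a finite commutative group scheme therefore gives $\ed\cM_{\Gamma}\le\dim\cM_{\Gamma}+\ed_{k(\cM_{\Gamma})}\bigl((\ZZ/2)^{r}\bigr)=(3g-3+n)-\#E(\Gamma)+r$.

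It then suffices to show $r\le\#E(\Gamma)$, for then $\ed\cM_{\Gamma}\le3g-3+n\le\ed\cM_{g,n}$. This is a combinatorial point: an elliptic tail has valence one and so owns a single edge of $\Gamma$, a loop is itself an edge, and these edges are pairwise distinct. The only way two of the listed contributions could compete for one edge is that two elliptic tails are joined by a common edge, which forces $\Gamma$ to be the one-edge graph with two genus-one vertices, i.e. $(g,n)=(2,0)$; there $r=2>1=\#E(\Gamma)$, but $\ed\cM_{2,0}=5$ is large enough to absorb this, and one checks in the same way that every one of the finitely many boundary strata of $\overline{\cM}_{2,0}$ has essential dimension at most $5$.

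The heart of the matter, and the step I expect to be the main obstacle, is the pair of structural inputs in the second paragraph: the general upper bound for the essential dimension of a smooth \dm stack in terms of its dimension and its generic residual gerbe, and the precise determination, for each boundary stratum $\cM_{\Gamma}$, of that gerbe --- both its band $(\ZZ/2)^{r}$ (requiring the fiddly but finite analysis of which automorphisms of the components propagate across the nodes) and its neutrality (requiring the construction and descent of explicit tautological families). That these are genuine issues, not soft bookkeeping, is shown by $\cM_{2,0}$ itself, whose generic residual gerbe is \emph{non}-neutral; this is exactly why $\ed\cM_{2,0}=5$ rather than $3$, and so one really must verify that no such phenomenon occurs along the boundary. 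The remaining ingredients --- the stratification and clutching descriptions of $\overline{\cM}_{g,n}$, and the behaviour of essential dimension under open substacks and under quotients by finite groups --- are standard or are part of the general theory set up above.
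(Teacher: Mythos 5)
Your proposal does not prove the theorem it sets out to prove: none of the displayed values of $\ed\cM_{g,n}$ is established. You invoke ``the part of the theorem already proved'' to supply the numbers $\ed\cM_{g_v,n_v}$, but those values are precisely (most of) the content of the statement, and computing them is where essentially all of the paper's work lies: the genericity theorem (Theorem~\ref{thm:generic}, via Corollary~\ref{cor.generic2}) gives $\ed\cM_{g,n}=3g-3+n$ whenever the generic curve has trivial automorphisms; the low-genus cases $(0,0),(0,1),(0,2),(1,1)$ are handled in Proposition~\ref{prop.sect8}; the value $\ed\cM_{2,0}=5$ requires identifying the generic gerbe of the hyperelliptic stack and applying the gerbe computation Theorem~\ref{t.edGerbe} (Theorem~\ref{t.hyperelliptic}); and $\ed\cM_{1,0}=+\infty$ requires the separate Tate-curve argument of \S\ref{s.Tate}, since $\cM_{1,0}$ is not even a \dm stack. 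None of these ingredients appears in your write-up, so the heart of the theorem is missing rather than deferred.

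For the one assertion you do address, $\ed\overline{\cM}_{g,n}=\ed\cM_{g,n}$, the stratification detour is both unnecessary and, as written, unsound. The paper obtains this in one line from Corollary~\ref{cor.generic1}: $\overline{\cM}_{g,n}$ is a smooth integral tame \dm stack in characteristic $0$ and $\cM_{g,n}$ is a dense open substack, so the two have equal essential dimension --- no analysis of boundary strata is needed. Your bound $\ed\cM_{\Gamma}\leq \dim\cM_{\Gamma}+\ed(\text{band})$ would itself have to come from Theorem~\ref{thm:generic} applied to each stratum, so nothing is gained; and the structural claims it rests on are unverified and in places false. The generic automorphism group of a boundary stratum is not always an elementary abelian $2$-group generated by elliptic-tail involutions and loop flips: for instance the stratum of $\overline{\cM}_{2,0}$ parametrizing two rational curves glued along three nodes has automorphism group of order $12$, containing an $S_3$ permuting the edges, so nontrivial graph automorphisms do propagate, and your counting $r\leq \#E(\Gamma)$ is not even meaningful there; totally degenerate strata in higher genus behave similarly. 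Likewise the neutrality of the generic gerbe of every stratum is asserted via an unspecified ``descent argument'' --- exactly the delicate point, as the non-neutral generic gerbe of $\cM_{2,0}$ itself warns --- and you acknowledge not supplying it. So the proposal establishes neither the displayed values nor, rigorously, the comparison with $\overline{\cM}_{g,n}$.
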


In particular, the values of $\ed\cM_{g, 0} = \ed \cM_g$ give 
a complete answer to Question~\ref{q.curves}. 

Note that $3g-3+n$ is the dimension of the moduli space $\bM_{g,n}$
in the stable range $2g-2+n > 0$ (and the dimension of the stack in all
cases); the dimension of the moduli space represents an obvious lower
bound for the essential dimension of a stack. The first
four cases are precisely the ones where a generic object in
$\cM_{g,n}$ has non-trivial automorphisms, and $(g,n) =
(1,0)$ is the only case where the automorphism group scheme 
of an object of $\cM_{g,n}$ is not affine.

Our proof of Theorem~\ref{thm.curves} for $(g, n) \ne (1, 0)$
relies on two results of independent interest. One is  
the ``Genericity Theorem"~\ref{thm:generic} which says 
that the essential dimension of a smooth integral 
\dm stack satisfying an
appropriate separation hypothesis is the sum of its dimension and the
essential dimension of its generic gerbe. This somewhat surprising
result implies that the essential dimension of a non-empty open substack
equals the essential dimension of the stack. In particular, it proves
Theorem~\ref{thm.curves} in the cases where a general curve in
$\cM_{g,n}$ has no non-trivial automorphisms. It also brings 
into relief the important role played by gerbes in this theory.

The second main ingredient in our proof of Theorem~\ref{thm.curves} 
is the following formula, which we use to compute 
the essential dimension of the generic gerbe.

\begin{theorem}
\label{t.edGerbe}
Let $\cX$ be a gerbe over a field $K$ banded by a group $G$.
Let $[\cX] \in \H^2(K, G)$ be the Brauer class of $\cX$.

\begin{enumeratea}

\item If $G = \GG_m$ and $\ind [\cX]$ is a prime power 
then $\ed \cX = \ind {[\cX]} - 1$.

\item If $G = \mmu_{p^r}$, where $p$ is a prime and $r \ge 1$, 
then $\ed \cX = \ind {[\cX]}$.

\end{enumeratea}
\end{theorem}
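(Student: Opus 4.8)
The plan is to treat the gerbe $\cX$ as a twisted form of $\cB_K G$ and to compute its essential dimension by realizing it as a quotient stack. For part (a), write $\ind[\cX] = n = p^s$. The class $[\cX] \in \H^2(K,\gm)$ corresponds to a central simple $K$-algebra $A$ of degree $n$ (Brauer-equivalent to a division algebra of index $n$), and the $\gm$-gerbe $\cX$ is canonically isomorphic to the quotient stack $[\,X/\GL_n\,]$ where $X$ is the $\GL_1(A)$-torsor of trivializations — concretely, $\cX \cong [\,\GL_1(A)\backslash\!\!\backslash\GL_n\,]$ realized as the stack whose fiber over $T$ classifies rank-$n$ $A\otimes\mathcal O_T$-modules that are locally free of rank $1$, equivalently $\cX = [\,\underline{\isom}(A^{\oplus ?},-)\,]$. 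The cleanest route: the gerbe of liftings/splittings sits in $[\,Y/\PGL_n\,]$ or, better, $\cX \simeq [\,\mathrm{SB}(A)\text{-frame}\,/\gm\,]$. I would use the standard presentation: if $V$ is a representation of $\GL_n$ on which $\gm$ acts by scalars with weight one, the twist ${}_{A}\!V$ gives a vector bundle on $\cX$, and choosing $V = $ the defining representation shows $\ed\cX \le n-1$ via a versal-type argument (the generic point of $\PP(V)$-construction). For the lower bound $\ed\cX \ge n-1$, I would invoke the Genericity Theorem philosophy in reverse: specialize to a field $L/K$ over which $A\otimes L$ is still division of index $n$, and use the canonical dimension / incompressibility of Severi--Brauer varieties — Karpenko's theorem that the Severi--Brauer variety $\mathrm{SB}(A)$ of a division algebra of prime-power index $p^s$ is incompressible, so $\operatorname{cdim}\mathrm{SB}(A) = n-1$ — together with the fact that a point of $\cX$ over a field $E/K$ gives a point of $\mathrm{SB}(A)(E')$ for a finite extension and conversely.

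For part (b), with $G = \mmu_{p^r}$, the approach is parallel but now the relevant object is a cyclic (Kummer) algebra rather than the Severi--Brauer variety alone. The class $[\cX] \in \H^2(K,\mmu_{p^r})$ maps to a Brauer class killed by $p^r$; let $n = \ind[\cX]$, again a power of $p$. The $\mmu_{p^r}$-gerbe $\cX$ is the quotient $[\,W/(\GL_n \times_{\gm,p^r}\gm)\,]$ — more usefully, $\cX$ fits in an exact sequence of gerbes relating it to the $\gm$-gerbe of part (a) via $1 \to \mmu_{p^r}\to\gm\xrightarrow{p^r}\gm\to 1$, giving $\cB\mmu_{p^r}$-torsor structure. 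The upper bound $\ed\cX \le n$: here one uses that a form of $\mmu_{p^r}$ twisted by $\cX$ is split by the function field of a Severi--Brauer-type variety, and the extra ``$+1$'' over part (a) comes from needing to specify a $p^r$-th root datum — concretely a point of $\cX$ is a pair (point of the index-$n$ Severi--Brauer variety, a $\mmu_{p^r}$-torsor compatibility), costing $n-1$ from the first factor plus $1$ from the $\mmu_{p^r}$-part. For the lower bound $\ed\cX \ge n$, I would combine Karpenko's incompressibility ($n-1$) with the observation that the $\mmu_{p^r}$-gerbe is not ``cohomologically trivial'' even after the Severi--Brauer specialization — one needs a valuation/residue argument: pass to $K\dr t$ or a suitable DVR, where the residue of the $\mmu_{p^r}$-class in $\H^1$ is nonzero, contributing the final $+1$ via the standard subadditivity $\ed\cX \ge \ed(\text{residue}) + \ed(\text{special fiber})$ that underlies the Genericity Theorem.

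The single step I expect to be the main obstacle is the lower bound in part (b), i.e. squeezing out the extra ``$+1$'' that distinguishes $\ed\cX = n$ from $\ed\cX = n-1$. The upper bounds are essentially formal (write down an explicit versal family: an algebra of degree $n$ together with its $\mmu_{p^r}$-structure lives on an open subvariety of an affine space of the right dimension, or use the quotient-stack presentation and bound $\ed[X/G] \le \dim X - \dim G + \ed G$ style inequalities proven earlier). The lower bound $n-1$ in part (a) is Karpenko's incompressibility theorem, which I would cite. But in (b) the naive Severi--Brauer argument only sees the index and gives $n-1$; capturing the additional invariant carried by a genuine $\mmu_{p^r}$-class (as opposed to the associated $\gm$-class) requires either a careful ramification/residue computation over a two-dimensional local field or a direct cohomological-invariant argument showing that the functor $E \mapsto \cX(E)/{\cong}$ has a nonzero invariant in $\H^{n}$ or that no $(n-1)$-dimensional versal scheme exists — likely via the same specialization technique used to prove the Genericity Theorem, applied to $\cX$ itself viewed as a smooth \dm stack. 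I would model this step on the computation of $\ed(\mmu_{p^r})$ and of essential dimension of gerbes in the literature of Karpenko--Merkurjev, adapting their degeneration argument to the stacky setting.
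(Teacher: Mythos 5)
Your part (a) and your upper bound in (b) follow essentially the same route as the paper: the isomorphism-classes functor of the $\GG_m$-gerbe $\cY$ is the determination functor of the associated Brauer--Severi variety $P$ (it is empty unless $P$ has a rational point, and a singleton otherwise by Hilbert~90), so $\ed\cY=\cd P$, which equals $\ind[\cY]-1$ in the prime-power case by Karpenko's incompressibility theorem; and for (b) the map $\cX\to\cY$ is representable of relative dimension $\le 1$, so the fiber dimension theorem gives $\ed\cX\le\ed\cY+1$. One small correction to (a): do not pass to finite extensions when relating points of $\cY$ to points of $P$; the equivalence $\cY(E)\neq\emptyset\iff P(E)\neq\emptyset$ holds on the nose, and you need it on the nose (plus Hilbert~90) to get the equality $\ed\cY=\cd P$ rather than a comparison only up to finite extensions.

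The genuine gap is the lower bound $\ed\cX\ge\ind[\cX]$ in part (b), which you correctly identify as the crux but then only gesture at. The inequality you would like to invoke, $\ed\cX\ge\ed(\text{residue})+\ed(\text{special fiber})$, is not an available theorem in this generality, and ``adapting the Karpenko--Merkurjev degeneration argument'' amounts to citing a proof of the very statement being proved. What is actually required is a concrete construction together with a valuation-theoretic case analysis, which the paper carries out as follows: set $L=K(P)$, lift the tautological point $a\colon\Spec L\to\cY$ to $\Spec L\to\cX$, and twist it by the generic Kummer class $(t)\in\H^1(L(t),\mmu_d)$ via the action of $\cB_{L}\mmu_d$ on $\cX$, obtaining $\alpha\in\cX(L(t))$. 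If $\alpha$ descended to $M\subset L(t)$ with $\trdeg_K M\le\ed\cY$, one examines the $t$-adic valuation $\nu$: if $\nu$ is nontrivial on $M$, the valuative criterion of properness for $P$ and the genericity of the splitting field $L=K(P)$ (i.e.\ $\ed a=\cd P=\ed\cY$) force a contradiction after passing to the residue field; if $\nu$ is trivial on $M$, one base-changes to $L$ (using that $M\otimes_K L$ and $L(t)\otimes_K L$ are fields because $P$ is geometrically integral) and compares valuations: the descended class must have $\nu_L$-value $\equiv 1\pmod d$, forcing $\nu$ to be nontrivial on $M$ after all. Without this step (or an equivalent argument) your proposal only yields $\ed\cX\ge\ind[\cX]-1$, i.e.\ it does not distinguish $n$ from $n-1$, which is exactly the content of part (b).
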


Our proof of this theorem can be found in 
the preprint~\cite[Section 7]{brv1}.
A similar argument was used by N. Karpenko and A. Merkurjev 
in the proof of~\cite[Theorem 3.1]{km2}, which generalizes 
Theorem~\ref{t.edGerbe}(b). For the sake of completeness,
we include an alternative proof of Theorem~\ref{t.edGerbe}
in \S\ref{sect.gerbe}.

Theorem~\ref{t.edGerbe} has a number of applications
beyond Theorem~\ref{thm.curves}. Some of these have already 
appeared in print.  In particular, we used Theorem~\ref{t.edGerbe}
to study the essential dimension of spinor groups in~\cite{brv3}, 
N. Karpenko and A. Merkurjev~\cite{km2} used it 
to study the essential dimension of finite $p$-groups, 
and A. Dhillon and N. Lemire~\cite{dhillon-lemire}
used it, in combination with the Genericity 
Theorem~\ref{thm:generic}, to give an upper bound for the essential 
dimension of the moduli stack of $\operatorname{SL}_n$-bundles 
over a projective curve. In this paper
Theorem~\ref{t.edGerbe} (in combination with Theorems~\ref{thm:generic}) 
is also used to study the essential dimension of 
the stacks of hyperelliptic curves (Theorem~\ref{t.hyperelliptic}) 
and, in the appendix written by Najmuddin Fakhruddin, of principally 
polarized abelian varieties.

In the case where $(g, n) = (1, 0)$ Theorem~\ref{thm.curves}
requires a separate argument, which is carried out 
in \S\ref{s.Tate}. In this case Theorem~\ref{thm.curves}
is a consequence of the fact that the group schemes of $l^{n}$-torsion
points on a Tate curve has essential dimension $l^n$, where $l$ 
is a prime.

\subsection*{Acknowledgments}
We would like to thank the Banff International Research Station in
Banff, Alberta (BIRS) for providing the inspiring meeting place where
this work was started.  We are grateful to  J.~Alper, K.~Behrend, 
C.-L. Chai, D.~Edidin, 
A. Merkurjev, B.~Noohi, G.~Pappas, 
M. Reid and B.~Totaro for helpful conversations.  

\section{The essential dimension of a stack}
\label{s.definition}

Let $k$ be a field. We will write $\Fields_k$ for the category of
field extensions $K/k$.   
Let $F\colon\Fields_k \arr\Sets$ be
a covariant functor.     

\begin{definition} 
  \label{def.merkurjev} Let $a\in F(L)$, where $L$ is an object of
  $\Fields_k$.  We say that $a$ \emph{descends} to an intermediate field
  $k \subset K\subset L$ or that $K$ is a \emph{field
  of definition} for $a$ if $a$ is in the image of the induced map
  $F(K) \arr F(L)$.

  The \emph{essential dimension} $\ed a$ of $a \in F(L)$ is the
  minimum of the transcendence degrees $\trdeg_{k}K$ taken over all
  intermediate fields $k \subseteq K \subseteq L$ such 
  that $a$ descends to $K$.

  The essential dimension $\ed F$ of the functor $F$ is the supremum
  of $\ed a$ taken over all $a\in F(L)$ with $L$ in $\Fields_k$.
  We will write $\ed F=-\infty$ if $F$ is the empty functor.

  These notions are relative to the base field $k$. To emphasize this,
  we will sometimes write $\ed_k a$ or $\ed_k F$ instead of $\ed a$
  or $\ed F$, respectively.
\end{definition}         


The following definition singles out a class of functors that is 
sufficiently broad to include most interesting examples, 
yet ``geometric" enough to allow one to get a handle on their 
essential dimension. 

\begin{definition} \label{def.ed-stack} Suppose $\cX$ is an algebraic
stack over $k$.  The \emph{essential dimension} $\ed \cX$ of $\cX$
is defined to be the essential dimension
of the functor $F_{\cX}\colon\Fields_k \arr\Sets$ which
sends a field $L/k$ to the set of isomorphism classes of objects in
the groupoid $\cX(L)$.\footnote{In the literature the functor
$F_{\cX}$ is sometimes denoted by $\widehat{\cX}$ or $\overline{\cX}$.}

As in Definition~\ref{def.merkurjev}, we will
write $\ed_k \cX$ when we need
to be specific about the dependence on the base field $k$.
Similarly for $\ed_k \xi$, where
$\xi$ is an object of $F_{\cX}$.
\end{definition}

\begin{example} \label{ex.edG}
Let $G$ be an algebraic group defined over $k$ and
$\cX = \cB_{k}G$ be the classifying stack of $G$. 
Then $F_{\cX}$ is the Galois cohomology functor
sending $K$ to the set $\H^1(K, G)$ of isomorphism classes 
of $G$-torsors over $\Spec(K)$, in the fppf
topology.  The essential 
dimension of this functor is a numerical invariant of $G$, 
which, roughly speaking, measures the complexity of $G$-torsors 
over fields.  This number is usually denoted by $\ed_k G$ or (if
$k$ is fixed throughout) simply by $\ed G$; following this
convention, we will often write $\ed G$ in place of $\ed \cB_{k}G$.
Essential dimension was originally introduced and 
has since been extensively studied
in this context; see e.g.,~\cite{bur, reichstein, ry, kordonskii0,
ledet, jly, bf1, lemire, cs, garibaldi}. The more general
Definition~\ref{def.merkurjev} is due to A.~Merkurjev; see
\cite[Proposition 1.17]{bf1}.
\end{example}          

\begin{example} \label{ex.ed-variety}
Let $\cX = X$ be a scheme of finite type over a field $k$,
and let $F_X \colon\Fields_k  \arr\Sets$ denote the functor
given by $K\arrto X(K)$.  Then an easy argument due to Merkurjev
shows that $\ed F_X=\dim X$; see~\cite[Proposition 1.17]{bf1}.

In fact, this equality remains true for any algebraic space $X$.
Indeed, an algebraic space $X$ has a stratification by schemes $X_i$.
Any $K$-point $\eta\colon\Spec K \arr X$ must land in one 
of the $X_i$.  Thus $\ed X=\max\ed X_i=\dim X$.
\qed
\end{example}

\begin{example} \label{ex.ed-curves}
Let $\cX = \cM_{g, n}$ be the stack of smooth algebraic curves 
of genus $g$. Then the functor $F_{\cX}$ sends $K$
to the set of isomorphism classes of $n$-pointed smooth algebraic
curves of genus $g$ over $K$. Question~\ref{q.curves} asks about the 
essential dimension of this functor in the case where $n = 0$.
\end{example}

\begin{example} \label{ex.forms}
Suppose a linear algebraic group $G$ is acting on an algebraic space
$X$ over a field $k$. We shall write $[X/G]$ for the quotient stack
$[X/G]$. Recall that $K$-points of $[X/G]$ are by definition
diagrams of the form
\begin{equation} \label{e.functor}
 \xymatrix{
T \ar@{->}[r]^{\psi} \ar@{->}[d]^{\pi} &  X \cr
\Spec(K) &  }
\end{equation}
where $\pi$ is a $G$-torsor and $\psi$ is a $G$-equivariant map.
The functor $F_{[X/G]}$ associates with a field $K/k$ the set
of isomorphism classes of such diagrams.

In the case where $G$ is a special group (recall that this means 
that every $G$-torsor over $\Spec(K)$ is split, for every field $K/k$)
the essential dimension of $F_{[X/G]}$ has been previously studied in
connection with the so-called ``functor of orbits"
$\Orb_{X, G}$ given by the formula
\[ \text{$\Orb_{X, G}(K) \eqdef $ set of $G(K)$-orbits in $X(K)$.} \]
Indeed, if $G$ is special, the functors $F_{[X/G]}$ and $\Orb_{X, G}$
are isomorphic; an isomorphism between them is given by
sending an object~\eqref{e.functor} of $F_{[X/G]}$
to the $G(K)$-orbit of the point $\psi s \colon \Spec(K)  \arr X$, where
$s \colon \Spec(K) \arr T$ is a section of $\pi \colon T \to \Spec(K)$.

Of particular interest are the natural $\GL_n$-actions 
on $\AA^N$ = affine space 
of homogeneous polynomials of degree $d$ in $n$ variables and
on $\PP^{N-1}$ = projective space of degree $d$ hypersurfaces 
in $\PP^{n-1}$, where $N = {n + d - 1 \choose d}$ is the number 
of degree $d$ monomials in $n$ variables. For general $n$ and $d$
the essential dimension of the functor of orbits in these cases 
is not known. Partial results can be found~\cite{bf2} 
and~\cite[Sections 14-15]{ber}. Additional results in
this setting will be featured in a forthcoming paper.
\end{example}

%

\begin{remark} \label{rem.finite}
If the functor $F$ in Definition~\ref{def.merkurjev}
is limit-preserving, a condition satisfied in all 
cases of interest to us, then every element $a \in F(L)$
descends to a field $K \subset L$ that is finitely generated over $k$.
Thus in this case $\ed a$ is finite.
In particular, if $\cX$ is an algebraic stack over $k$,
$\ed \xi$ is finite for every object $\xi \in \cX(K)$ 
and every field extension $K/k$; the limit-preserving property 
in this case is proved in~\cite[Proposition 4.18]{LMB},

In~\S\ref{sect.fiber-dimension} we will show that, 
in fact, $\ed \cX < \infty$ for a broad class of algebraic stacks $\cX$;
cf.~Corollary~\ref{cor.finiteness}. On the other hand,
there are interesting examples where 
$\ed \cX = \infty$; see~Theorem~\ref{thm.curves} 
or~\cite{bs}.
\end{remark}

The following observation is a variant of~\cite[Proposition 1.5]{bf1}.

\begin{proposition}
\label{p.extensions}
Let $\cX$ be an algebraic stack over $k$, and let $K$ 
be a field extension of $k$. Then $\ed_{K}\cX_{K} \leq \ed_{k}\cX$.
\end{proposition}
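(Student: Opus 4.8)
The plan is to show that any element of the functor $F_{\cX_K}$ descends, over $K$, to a field of transcendence degree at most $\ed_k \cX$. Fix a field extension $L/K$ and an object $\xi \in \cX_K(L) = \cX(L)$ (the last identification because $\cX_K(L)$ is the groupoid of $L$-points of $\cX$ lying over $\Spec K \to \Spec k$, which is the same as $\cX(L)$ together with the structure map to $\Spec K$). Viewing $\xi$ instead as an element of $F_\cX(L)$ over the base field $k$, the definition of $\ed_k \cX$ gives an intermediate field $k \subseteq K_0 \subseteq L$ with $\trdeg_k K_0 \le \ed_k \cX$ such that $\xi$ descends to some $\xi_0 \in \cX(K_0)$.

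The key step is to produce from $K_0$ an intermediate field $K \subseteq K_0' \subseteq L$ over which $\xi$ still descends and with $\trdeg_K K_0' \le \trdeg_k K_0$. Set $K_0' = K \cdot K_0$, the compositum taken inside $L$ (this makes sense since both $K$ and $K_0$ sit inside $L$). Then $K \subseteq K_0' \subseteq L$, and the base change of $\xi_0$ along $\Spec K_0' \to \Spec K_0$ is an object of $\cX(K_0')$ whose further base change to $L$ is isomorphic to $\xi$; hence $\xi$, as an element of $F_{\cX_K}(L)$, descends to $K_0'$. It remains to bound $\trdeg_K K_0'$. Since $K_0'$ is generated over $K$ by $K_0$, and $K_0$ is generated over $k$ (hence over $k$, a fortiori over $K \cap K_0$) by a set of at most $\trdeg_k K_0 = d$ elements whose images in $K_0$ contain a transcendence basis, the compositum $K_0' = K(K_0)$ is generated over $K$ by those same elements; therefore $\trdeg_K K_0' \le \trdeg_k K_0 \le \ed_k \cX$. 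Taking the supremum over all $L/K$ and all $\xi$ yields $\ed_K \cX_K \le \ed_k \cX$.

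The only real subtlety is the transcendence degree estimate for the compositum: one must be careful that adjoining the elements of (a generating set of) $K_0$ to the possibly larger field $K$ cannot increase transcendence degree, which is the standard fact that $\trdeg_F F(S) \le |S|$ for any set $S$ in an extension of $F$, applied with $F = K$ and $S$ a transcendence-basis-completing generating set of $K_0/k$ of size $\le d$. Everything else is a formal manipulation of base change of objects in a stack and the definitions; no finiteness or geometric input about $\cX$ is needed, so the argument works for an arbitrary algebraic stack $\cX$ over $k$.
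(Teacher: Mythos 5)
Your proposal is correct and takes essentially the same route as the paper's proof: identify $\cX_K(L)$ with $\cX(L)$, choose a field of definition $K_0$ over $k$ with $\trdeg_k K_0 \le \ed_k\cX$, and pass to the compositum $K\cdot K_0$ inside $L$, which defines $\xi$ over $K$ and satisfies $\trdeg_K (K\cdot K_0) \leq \trdeg_k K_0$. One small caveat: the cleanest justification of that last inequality is that $K\cdot K_0$ is algebraic over $K(T)$ for a transcendence basis $T$ of $K_0/k$ (so its transcendence degree over $K$ is at most $|T|$), rather than via a generating set of $K_0/k$ of size $\trdeg_k K_0$, which need not exist.
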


Here, as in what follows, we denote by $\cX_{K}$ the stack $\spec K \times_{\spec k}\cX$.

\begin{proof}
If $L/K$ is a field extension,
then the natural morphism $\cX_K(L) \arr \cX(L)$ is an equivalence.
Suppose that $M/k$ is a field of definition for an object $\xi$ in
$\cX (L)$. Let $N$ be a composite of $M$ and $K$ over $k$. Then $N$ is 
a field of definition for $\xi$, $\trdeg_K N\leq \trdeg_k M$, and
the proposition follows.
\end{proof}

\section{A fiber dimension theorem}
\label{sect.fiber-dimension}
 
We now recall Definitions (3.9) and (3.10) from~\cite{LMB}.
A morphism $f\colon\cX \arr\cY$ of algebraic stacks (over $k$) is said to be
{\em representable} if, for every $k$-morphism $T \to \cY$, where $T$ is 
an affine $k$-scheme, the fiber product $\cX\times_{\cY} T$ is 
representable by a an algebraic space over $T$. A representable morphism
$f\colon\cX \arr\cY$ is said to be \emph{locally of finite type 
and of fiber dimension $\le d$} if the projection
$\cX\times_{\cY} T \to T$ is also locally of finite type 
over $T$ and every fiber has dimension $\leq d$. 

\begin{example} \label{ex.rel-dim}
Let $G$ be an algebraic group defined over $k$, and let 
$X \to Y$ be a $G$-equivariant morphism of $k$-algebraic spaces,
locally of finite type and of relative dimension $\le d$.  Then 
the induced map of quotient stacks 
$[X/G] \to [Y/G]$ is representable, locally of finite type
and of relative dimension $\le d$.
\end{example} 

The following result may be viewed as a partial 
generalization of the fiber dimension theorem 
(see~\cite[Exercise II.3.22 or Proposition III.9.5]{hartshorne}) 
to the setting where schemes are replaced by stacks and 
dimension by essential dimension.

\begin{theorem}
\label{thm.fiber-dimension}
Let $d$ be an integer, $f \colon \cX \arr
\cY$ be a representable $k$-morphism of algebraic stacks
which is locally of finite type and of fiber dimension at most $d$. 
Let $L/k$ be a field, $\xi \in \cX(L)$. Then

\begin{enumeratea}

\item $\ed_k \xi \leq \ed_k f(\xi) + d$, and

\item $\ed_k \cX \leq \ed_k \cY + d$.

\end{enumeratea}
 In particular, if 
$\ed_k \cY$ is finite, then so is  $\ed_k \cX$.
\end{theorem}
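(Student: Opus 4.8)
Part (b) and the final ``in particular'' assertion are immediate from part (a): taking the supremum over all field extensions $L/k$ and all $\xi\in\cX(L)$, and observing that $f(\xi)$ then ranges over objects of $\cY$, one gets $\ed_k\cX=\sup_\xi\ed_k\xi\le\sup_\xi\bigl(\ed_k f(\xi)+d\bigr)\le\ed_k\cY+d$, so finiteness of $\ed_k\cY$ forces finiteness of $\ed_k\cX$. Thus the plan is to prove part (a).

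I may assume $\ed_k f(\xi)<\infty$, as otherwise there is nothing to prove. Write $\eta:=f(\xi)\in\cY(L)$ and choose an intermediate field $k\subseteq K\subseteq L$ that is a field of definition for $\eta$ with $\trdeg_k K=\ed_k\eta$; by Remark~\ref{rem.finite} we may take $K$ finitely generated over $k$. Fix an object $\eta_0\in\cY(K)$ together with an isomorphism $(\eta_0)_L\larrowsim\eta$ in $\cY(L)$. The goal will be to exhibit a field of definition $K'$ for $\xi$ with $K\subseteq K'\subseteq L$ and $\trdeg_K K'\le d$; additivity of transcendence degree in the tower $k\subseteq K\subseteq K'$ then gives $\ed_k\xi\le\trdeg_k K'=\trdeg_k K+\trdeg_K K'\le\ed_k f(\xi)+d$, as desired.

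To produce $K'$, I would form the $2$-fiber product $Z:=\cX\times_{\cY}\Spec K$ taken along $\eta_0\colon\Spec K\to\cY$. Since $\Spec K$ is an affine $k$-scheme and $f$ is representable, $Z$ is an algebraic space over $K$; applying the ``locally of finite type, fiber dimension $\le d$'' hypothesis with $T=\Spec K$ shows that $Z\to\Spec K$ is locally of finite type and that $Z$, being (isomorphic to) the fiber over the unique point of $\Spec K$, has $\dim Z\le d$. The pair consisting of $\xi\in\cX(L)$ and the isomorphism $(\eta_0)_L\larrowsim\eta=f(\xi)$ is precisely an $L$-point $z\colon\Spec L\to Z$ lying over $\Spec K$ (in particular $Z\neq\varnothing$). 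By Example~\ref{ex.ed-variety}, $\ed_K Z=\dim Z\le d$, so $z$ descends to some intermediate field $K\subseteq K'\subseteq L$ with $\trdeg_K K'\le d$, say via $z_0\in Z(K')$ with $(z_0)_L\cong z$. Composing $z_0$ with the projection $Z\to\cX$ gives an object $\xi_0\in\cX(K')$, and unwinding the fiber-product identifications shows $(\xi_0)_L\cong\xi$ in $\cX(L)$; hence $K'$ is a field of definition for $\xi$, which completes the argument.

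The only point requiring care — and the step I expect to be the main, if modest, obstacle — is the $2$-categorical bookkeeping in the last paragraph: one must invoke the precise description of $T$-points of a $2$-fiber product of stacks to see that an object of $Z(K')$ amounts to an object $\xi_0\in\cX(K')$ equipped with an isomorphism $f(\xi_0)\cong(\eta_0)_{K'}$, and that these isomorphisms can be chosen compatibly so that base change of $\xi_0$ along $\Spec L\to\Spec K'$ recovers $\xi$ up to isomorphism. Everything else reduces either to the two hypotheses on $f$ or to the already-established equality (Example~\ref{ex.ed-variety}) of essential dimension and dimension for algebraic spaces.
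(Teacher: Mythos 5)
Your argument is correct and is essentially the paper's own proof: both form the fiber product $\cX\times_{\cY}\Spec K$ over a minimal field of definition $K$ of $f(\xi)$, use representability and the fiber-dimension hypothesis to see it is an algebraic space of dimension at most $d$ over $K$, and descend the induced $L$-point to a field of transcendence degree at most $d$ over $K$ (the paper does this directly via the residue field of the image point, which is exactly the content of Example~\ref{ex.ed-variety} that you invoke). Part (b) is deduced from (a) by taking suprema in both treatments.
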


\begin{proof} (a) By the definition of $\ed_k f(\xi)$ we can 
find an intermediate field $k \subset K \subset L$ and a morphism
  $\eta \colon\Spec K \arr \cY$ such that 
  $\trdeg_k K \leq \ed f(\xi)$ and the following diagram commutes.
\begin{equation*}
\xymatrix{
\Spec L    \ar[r]^-{\xi}\ar[d]  & \cX\ar[d]^{f}\\
\Spec K   \ar[r]^-{\eta}        & \cY      \\
}
\end{equation*}
Let $\cX_K \eqdef \cX\times_{\cY}\Spec K$.  By the hypothesis, $\cX_K$ is an
algebraic space, locally of finite type over $K$ and of relative
dimension at most $d$.  By the commutativity of the diagram above, the
morphism $\xi \colon\Spec L \arr \cX$ factors through $\cX_K$:  
\begin{equation*}
\xymatrix{
\Spec L   \ar@/^/[rrd]^-{\xi} \ar[rd]^-{\xi_0} \ar@/_/[rdd] &  & \\  
 & \cX_K \ar[r] \ar[d] & \cX\ar[d]^{f}\\
 & \Spec K   \ar[r]^-{\eta}        & \cY      \\ }
\end{equation*}
Moreover, $\xi$ factors through $K(p)$, where 
$p$ denotes the image of $\xi_0$ in $\cX_K$.  
Since $\cX_K$ has dimension at most
$d$ over $K$, we have $\trdeg_K K(p)\leq d$.  Therefore, 
\[ \trdeg_k K(p) = \trdeg_k K + \trdeg_K K(p) \leq \ed f(\xi) + d \]
and part~(a) follows.

\smallskip
Part~(b) follows from (a) by taking the maximum 
on both sides over all $L/k$ and all $\xi \in \cX(L)$. 
\end{proof}

\begin{corollary} \label{cor.lower-bound1}
Consider an action of an algebraic 
group $G$ on an algebraic space $X$, defined over 
a field $k$. Assume $X$ 
is locally of finite type over $k$. Then
\[ \ed_k G \ge \ed_k [X/G] - \dim X \, . \]
\end{corollary}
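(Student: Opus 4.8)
The plan is to deduce this as an immediate consequence of the fiber dimension theorem (Theorem~\ref{thm.fiber-dimension}), applied to the structure morphism of the quotient stack $[X/G]$ to the classifying stack $\cB_{k}G$.

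First I would observe that the structure morphism $X \to \Spec k$ is $G$-equivariant, where $G$ acts trivially on $\Spec k$, and that it is locally of finite type (by hypothesis on $X$) and of relative dimension $\le \dim X$. Here one should note that the relative dimension bound is genuinely a bound on fiber dimension: for any $k$-scheme $T$ and any point $t \in T$, the fiber of $X \times_k T \to T$ over $t$ is $X_{\kappa(t)}$, whose dimension is at most $\dim X$ (dimension of a finite-type scheme over a field is unchanged by field extension, and an algebraic space is stratified by such schemes, exactly as in Example~\ref{ex.ed-variety}). Applying Example~\ref{ex.rel-dim} with $Y = \Spec k$, the induced morphism of quotient stacks
\[
f\colon [X/G] \longrightarrow [\Spec k \,/\, G] = \cB_{k}G
\]
is representable, locally of finite type, and of fiber dimension at most $d := \dim X$.

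Now Theorem~\ref{thm.fiber-dimension}(b) gives $\ed_k [X/G] \le \ed_k \cB_{k}G + \dim X$. Since $\ed_k \cB_{k}G = \ed_k G$ (Example~\ref{ex.edG}), rearranging yields $\ed_k G \ge \ed_k [X/G] - \dim X$, as claimed.

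There is no real obstacle here beyond verifying the hypotheses of Theorem~\ref{thm.fiber-dimension}; the only point requiring a moment's care is the fiber-dimension claim for $X \times_k T \to T$, which I would justify by the stratification argument above rather than by any subtle dimension theory. (If one prefers, one can avoid even this by first replacing $d = \dim X$ with $\dim X_{\overline{k}}$ or simply by noting that $\dim X$ already denotes the supremum of the dimensions of the geometric fibers.)
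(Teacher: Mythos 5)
Your proposal is correct and matches the paper's own proof: both apply Example~\ref{ex.rel-dim} to the $G$-equivariant map $X \to \Spec k$ to get a representable morphism $[X/G] \to \cB_{k}G$ of fiber dimension at most $\dim X$, and then invoke Theorem~\ref{thm.fiber-dimension}(b) together with $\ed_k \cB_{k}G = \ed_k G$. The extra care you take in justifying the fiber-dimension bound is fine but not a deviation from the paper's argument.
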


\begin{proof} The natural $G$-equivariant map 
$X \to \spec k$ gives rise to
a map $[X/G] \to \cB_{k}G$ of quotient stacks.
This latter map is locally of finite type and of relative 
dimension $\leq \dim X$; see Example~\ref{ex.rel-dim}.
Applying Theorem~\ref{thm.fiber-dimension}(b)
to this map, we obtain the desired inequality.
\end{proof}

\begin{corollary}\label{cor.finiteness} {\rm (Finiteness of
essential dimension)}
  Let $\cX$ be an algebraic stack of finite type over $k$.
  Suppose that for any algebraically closed extension $\Omega$
  of $k$ and any
  object $\xi$ of $\cX(\Omega)$ the group scheme
  $\underaut_{\Omega}(\xi) \arr \spec\Omega$ is affine. Then
  $\ed_{k}\cX < \infty$.
\end{corollary}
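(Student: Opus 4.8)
The plan is to reduce the statement to Theorem~\ref{thm.fiber-dimension} by exhibiting $\cX$ as the target of a representable morphism from a quotient stack of the form $[X/\GL_N]$ with $X$ of finite type over $k$. First I would pass to a presentation: since $\cX$ is an algebraic stack of finite type over $k$, there is a smooth surjective morphism $U \arr \cX$ with $U$ an affine $k$-scheme of finite type. The morphism $U \arr \cX$ is representable, smooth, and of some finite fiber dimension $d$ (its fibers are the smooth $k$-schemes $U \times_{\cX} T$, which have bounded dimension because everything is of finite type). It would then be tempting to apply Theorem~\ref{thm.fiber-dimension}(b) directly, but that goes the wrong way: it bounds $\ed U$ by $\ed\cX + d$, whereas we want to bound $\ed\cX$. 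The affineness hypothesis on the automorphism group schemes is exactly what is needed to turn the presentation into a \emph{quotient} presentation $\cX \simeq [X/G]$ for a linear algebraic group $G$, and then to bound $\ed\cX$ from above using a map \emph{out of} $\cX$.

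So the heart of the argument is: under the stated hypothesis, $\cX \simeq [X/\GL_N]$ for some $N$ and some algebraic space $X$ of finite type over $k$. The idea is standard (it is the passage from "the diagonal is affine" to "the stack is a quotient of an algebraic space by $\GL_N$"): the affineness of $\underaut_\Omega(\xi)$ over all algebraically closed $\Omega$ forces the diagonal $\cX \arr \cX \times_k \cX$ to be affine (one checks this fiberwise, the automorphism group schemes being the fibers of the diagonal over the "identity" locus, and uses that $\cX$ is of finite type so affineness can be checked on geometric points after a Noetherian reduction argument). Once the diagonal is affine, a theorem of the Kresch/Totaro/Edidin--Hassett--Kresch--Vistoli circle (for Deligne--Mumford stacks) — or more simply, the direct argument that a stack of finite type with affine diagonal admits a vector bundle whose associated frame bundle is an algebraic space — produces the desired presentation $\cX = [X/\GL_N]$ with $X$ of finite type over $k$. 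I would cite this rather than reprove it.

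Granting the presentation, the conclusion is immediate: the structure map $X \arr \spec k$ is $\GL_N$-equivariant and of finite relative dimension $\dim X < \infty$, so by Example~\ref{ex.rel-dim} the induced map $[X/\GL_N] \arr \cB_k \GL_N$ is representable, locally of finite type, of fiber dimension $\le \dim X$. Theorem~\ref{thm.fiber-dimension}(b) then gives
\[
\ed_k \cX = \ed_k [X/\GL_N] \le \ed_k \cB_k\GL_N + \dim X = \ed_k \GL_N + \dim X = \dim X < \infty,
\]
since $\GL_N$ is a special group and hence $\ed_k\GL_N = 0$.

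The main obstacle I anticipate is the presentation step — specifically, justifying that the pointwise affineness hypothesis (stated only over algebraically closed fields, and only for automorphism groups of objects) upgrades to affineness of the diagonal morphism of $\cX$, and from there to a global quotient presentation by a linear group with $X$ of \emph{finite type}. The automorphism-groups-to-diagonal implication requires a little care because the diagonal $\cX \arr \cX\times_k\cX$ is representable but one must see its affineness from the fibers over geometric points of the "diagonal-of-the-diagonal" (i.e. the $\underisom$ sheaves), invoking that affineness of a finite-type morphism to a Noetherian base can be detected on fibers together with properness-type or cohomological criteria; and the quotient presentation for non-Deligne--Mumford stacks is genuinely a theorem requiring hypotheses (normality, or a stratification argument using Example~\ref{ex.ed-variety}-style dévissage) that I would spell out or cite precisely. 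Everything downstream of the presentation is formal.
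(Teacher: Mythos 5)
There is a genuine gap, and it is exactly at the step you flagged as the ``heart of the argument'': the passage from the hypothesis on automorphism groups to a \emph{global} presentation $\cX \simeq [X/\GL_N]$. First, pointwise affineness of $\underaut_\Omega(\xi)$ over algebraically closed fields does \emph{not} imply that the diagonal of $\cX$ is affine: the hypothesis of the corollary is satisfied by every algebraic space of finite type, including badly non-separated ones (e.g.\ the affine plane with doubled origin, whose diagonal pulled back to a chart is $\AA^2 \setminus \{0\} \to \AA^4$, not affine), so no fiberwise/Noetherian argument can produce affineness of the diagonal from affineness of the stabilizers. Second, even granting affine diagonal, the claim ``finite type with affine diagonal $\Rightarrow$ $[X/\GL_N]$ with $X$ an algebraic space of finite type'' is not a theorem you can cite: it amounts to the resolution property, which is not known to follow from affine diagonal (Totaro's theorem goes in the other direction, from the resolution property to a quotient presentation, under normality hypotheses), and Edidin--Hassett--Kresch--Vistoli exhibit finite-type stacks with finite stabilizers that are not quotient stacks at all. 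So the presentation step, on which everything downstream rests, cannot be repaired in the form you propose.

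The paper avoids this by using a strictly weaker and available statement: Kresch's stratification theorem \cite[Proposition 3.5.9]{kresch}, which says that under exactly the corollary's hypothesis (affine stabilizers at geometric points) the finite-type stack $\cX$ is covered by finitely many locally closed substacks, each of the form $[X_i/G_i]$ with $G_i$ an affine algebraic group. Since any object of $\cX$ over a field factors through one stratum, $\ed_k\cX = \max_i \ed_k [X_i/G_i]$, and one never needs a single global quotient. Your final step is then the same as the paper's: apply Theorem~\ref{thm.fiber-dimension}(b) (via Corollary~\ref{cor.lower-bound1}) to $[X_i/G_i] \to \cB_k G_i$ to get $\ed_k[X_i/G_i] \le \ed_k G_i + \dim X_i$, and use finiteness of $\ed_k G$ for affine $G$ (the paper does not need $G_i$ to be special, though taking $G_i = \GL_{N_i}$ and using $\ed_k\GL_{N_i}=0$ as you do is also fine). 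If you replace your global presentation by this stratification, the rest of your argument goes through and coincides with the paper's proof.
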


Note that Corollary~\ref{cor.finiteness} fails without
the assumption that all the $\underaut_{\Omega}(\xi)$ are affine.
For example, by Theorem~\ref{thm.curves}, $\ed\cM_{1,0}=+\infty$.

\begin{proof}
We may assume without loss of generality that
$\cX = [X/G]$ is a quotient stack for some affine
algebraic group $G$ acting on an algebraic space $X$. Indeed,
by a Theorem of Kresch~\cite[Proposition 3.5.9]{kresch} $\cX$
is covered by quotient stacks $[X_i/G_i]$ of this form
and hence, $\ed\cX = \max_i \, \ed {[X_i/G_i]}$.

If $\cX = [X/G]$ then
by Corollary~\ref{cor.lower-bound1}, 
\[ \ed {[X/G]} \le \ed_k G  + \dim(X)  \, . \]
The desired conclusion now follows from the well-known
fact that $\ed_k G < \infty$ for any affine algebraic
group $G$; see~\cite[Theorem 3.4]{reichstein}
or~\cite[Proposition 4.11]{bf1}.
\end{proof}

\section{The essential dimension of a gerbe over a field}
\label{sect.gerbe}

The goal of this section is to prove Theorem~\ref{t.edGerbe}
stated in the Introduction.  We proceed by briefly 
recalling some background material on gerbes 
from \cite[p.~144]{milne} and \cite[IV.3.1.1]{G}, 
and on canonical dimension from~\cite{km} and~\cite{ber}.

{\bf Gerbes.} Let $\cX$ be a gerbe defined over a field $K$ 
\emph{banded} by an abelian $K$-group scheme $G$.
In particular, $X$ is a stack over $K$ which becomes isomorphic 
to $\cB_K G$ over the algebraic closure of $K$.

There is a notion of equivalence of gerbes banded by $G$; the
set of equivalence classes is in a natural bijective correspondence with
the group $\H^{2}(K, G)$.  The identity element of 
$\H^{2}(K, G)$ corresponds to the class of the neutral gerbe $\cB_{K}G$.
Recall that the group $\H^2(K,\GG_m)$ is canonically isomorphic 
to the Brauer group $\Br K$ of Brauer equivalence classes 
of central simple algebras over $K$. Here, as usual, $\GG_m$ denotes the
multiplicative group scheme over $K$. 

{\bf Canonical dimension.}
Let $X$ be a smooth projective variety defined over a field $K$.
We say that $L/K$ is a {\em splitting field} for $X$ if
$X(L) \ne \emptyset$. A splitting field $L/K$ is called \emph{generic}
if for every splitting field $L_0/K$ there exists a $K$-place $L \to L_0$.
The {\em canonical dimension} $\cd X $ of $X$ is defined as the
minimal value of $\trdeg_K(L)$, where $L/K$ ranges over
all generic splitting fields. Note that the function field
$L = K(X)$ is a generic splitting field of $X$; see~\cite[Lemma 4.1]{km}.
In particular, generic splitting fields exist and $\cd X$ is finite.
If $X$ is a smooth complete projective variety over $K$ then $\cd X$ has
the following simple geometric interpretation: $\cd X$ is
the minimal value of $\dim(Y)$, as $Y$ ranges over the closed 
$K$-subvarieties of $X$, which admit a rational map $X \dasharrow Y$ 
defined over $K$; see~\cite[Corollary 4.6]{km}.

The {\em determination functor} $D_X\colon \Fields_K\to\Sets$ is defined 
as follows.  For any field extension $L/K$, $D_X(L)$ is the empty set, 
if $X(L) = \emptyset$, and a set consisting of one element
if  $X(L) \neq \emptyset$. The natural map $D(L_1) \to D(L_2)$ 
is then uniquely determined for any $K \subset L_1 \subset L_2$.
It is shown in~\cite{km} that if $X$ is a complete regular 
$K$-variety then 
\begin{equation} \label{e.determination}
\cd X = \ed D_X \, .
\end{equation}

Of particular interest to us will be the case where 
$X$ is a Brauer--Severi variety over $K$.  Let $m$ be 
the index of $X$.  If $m = p^a$ is a prime power then 
\begin{equation} \label{e.cd-primary}
\cd X = p^a - 1 \, ; 
\end{equation}
see \cite[Example 3.10]{km} or~\cite[Theorem 11.4]{ber}. 

If $m = p_{1}^{a_{1}} \dots p_{r}^{a_{r}}$ is the prime 
decomposition of $m$ then 
the class of $X$ in $\Br L$ is the sum of
classes $\alpha_{1}$, \dots,~$\alpha_{r}$ whose indices
are $p_{1}^{a_{1}}$, \dots,~$p_{r}^{a_{r}}$.
Denote by $X_{1}$, \dots, $X_{r}$ the Brauer--Severi varieties
associated with $\alpha_{1}$, \dots, $\alpha_{r}$.
It is easy to see that
$K(X_1 \times \dots \times X_r)$ is a generic splitting field for $X$.
Hence,
\[ \cd X \le \dim(X_1 \times \dots \times X_r) =
p_1^{a_1} + \dots + p_r^{a_r} - r \, .  \]
J.-L. Colliot-Thélène, N. Karpenko and 
A. Merkurjev~\cite{ctkm} conjectured that equality holds, i.e.,
\begin{equation}
\label{e.conjecture}
\cd X = p_1^{a_1} + \dots + p_r^{a_r} - r \, .
\end{equation}
As we mentioned above, this in known
to be true if $m$ is a prime power (i.e., $r = 1$).
Colliot-Thélène, Karpenko and Merkurjev also
proved~\eqref{e.conjecture} for $m = 6$; see \cite[Theorem 1.3]{ctkm}.
Their conjecture remains open for all other $m$.

\begin{theorem}
\label{t.cdP}
Let $d$ be an integer with $d >1$. Let $K$ be a field 
and $x \in \H^2(K, \mmu_d)$. Denote the image of $x$ in $\H^2(K, \GG_m)$ by $y
$,
the $\mmu_d$-gerbe associated with $x$ by $\cX \to \Spec(K)$,
the $\GG_m$-gerbe associated with $y$ by $\cY \to \Spec(K)$, 
and the Brauer--Severi variety associated with $y$ by $P$. Then
\begin{enumeratea}

\item $\ed \cY = \cd P$ and

\item $\ed \cX = \cd P + 1$.

\end{enumeratea}
In particular, if the index of $x$ is a prime power $p^r$ then
$\ed \cY  = p^r - 1$ and $\ed \cX  = p^r$.
\end{theorem}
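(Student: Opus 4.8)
The plan is to relate each gerbe directly to the determination functor $D_P$ of the Brauer--Severi variety $P$, exploiting the fact that both $\cX$ and $\cY$ acquire a point precisely over splitting fields of $P$. First I would analyze the $\GG_m$-gerbe $\cY$. For a field extension $L/K$, the groupoid $\cY(L)$ is nonempty if and only if $y$ dies in $\H^2(L,\GG_m) = \Br L$, i.e.\ if and only if $L$ splits the central simple algebra attached to $y$, equivalently $P(L)\neq\emptyset$. Thus the set-valued functor $F_{\cY}$ has the same support as $D_P$; to compare essential dimensions I would control the automorphisms. Over any $L$ splitting $y$, an object of $\cY(L)$ is a $\GG_m$-torsor, so $\Aut$ of any object is $L^*$ (or more precisely the isomorphism classes in $\cY(L)$ form a torsor under $\H^1(L,\GG_m) = \operatorname{Pic}(L)= 0$). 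Hence $F_{\cY}(L)$ is a single point whenever $P(L)\neq\emptyset$ and empty otherwise --- i.e.\ $F_{\cY} \cong D_P$ as functors on $\Fields_K$. Combined with~\eqref{e.determination}, which applies since a Brauer--Severi variety is smooth, projective and regular, this gives $\ed\cY = \ed D_P = \cd P$, proving (a).

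For part (b) I would bound $\ed\cX$ in two directions. For the upper bound I would use the natural morphism $\cX \to \cY$ induced by $\mmu_d \hookrightarrow \GG_m$ (the map of gerbes corresponding to $x\mapsto y$). This morphism is representable --- its fibers over a point of $\cY$ are torsors under $\GG_m/\mmu_d \cong \GG_m$, hence one-dimensional --- so it is locally of finite type of fiber dimension $\le 1$, and Theorem~\ref{thm.fiber-dimension}(b) yields $\ed\cX \le \ed\cY + 1 = \cd P + 1$. The reverse inequality $\ed\cX \ge \cd P + 1$ is the substantive point. Here I would argue that an object of $\cX$ over its generic point carries, in addition to the "determination" information forcing the field to split $P$, one genuinely non-descending parameter coming from the $\GG_m/\mmu_d$-direction --- essentially a "gerbe" analogue of the extra $+1$ that appears in passing from $\cd$ to the canonical dimension of the associated $\mmu_d$-gerbe. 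Concretely I would take a generic object $\xi \in \cX(L)$ over a generic splitting field $L = K(P)$, suppose it descends to $K_0 \subset L$ with $\trdeg_K K_0 = \ed\xi$, and produce a contradiction with $\ed\xi \le \cd P$ by exhibiting a valuation/specialization argument showing that the $\mmu_d$-class cannot be "rigidified" over a field of transcendence degree only $\cd P$; this is where I expect to import the technique of~\cite[Theorem 3.1]{km2} (and the argument in~\cite[Section 7]{brv1}), which is cited in the Introduction as the model for Theorem~\ref{t.edGerbe}(b). Alternatively, and perhaps more cleanly, I would deduce (b) from (a) together with Theorem~\ref{t.edGerbe}(b) applied to $G = \mmu_d$ when $d$ is a prime power, and reduce the general case to the $p$-primary case by a restriction-corestriction / prime-decomposition argument at the level of the Brauer class.

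Finally, the "in particular" clause is immediate: if $\ind x = p^r$ is a prime power then the associated Brauer--Severi variety $P$ has $\cd P = p^r - 1$ by~\eqref{e.cd-primary}, so (a) gives $\ed\cY = p^r - 1$ and (b) gives $\ed\cX = p^r$.

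The main obstacle I anticipate is the lower bound $\ed\cX \ge \cd P + 1$ in part (b): the upper bound and part (a) are essentially formal once the functor $F_{\cY}\cong D_P$ is identified, but extracting the extra $+1$ requires a genuine non-triviality argument about the $\mmu_d$-gerbe that does not follow from fiber-dimension considerations alone, and will need either the canonical-dimension-style incompressibility input of Karpenko--Merkurjev or a careful direct specialization argument.
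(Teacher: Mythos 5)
Your part (a) and the upper bound $\ed\cX \leq \ed\cY + 1$ in part (b) coincide with the paper's argument: the identification $F_{\cY} \cong D_P$ together with \eqref{e.determination}, and Theorem~\ref{thm.fiber-dimension}(b) applied to $\cX \arr \cY$. The gap is in the lower bound $\ed\cX \geq \cd P + 1$, which you rightly flag as the substantive point but then set up incorrectly. Your concrete plan is to take a generic object $\xi \in \cX(L)$ over $L = K(P)$ and derive a contradiction from $\ed\xi \leq \cd P$. This cannot work: every object of $\cX(L)$ descends to a finitely generated subfield of $L$, so $\ed\xi \leq \trdeg_K L = \dim P$, and already in the basic case $d = p$ prime with $\ind x = p$ one has $\dim P = p-1 = \cd P$, so \emph{every} object over $L$ satisfies $\ed\xi \leq \cd P$ and no contradiction is available over $L$. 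The missing idea is that the witness must live over a strictly larger field. The paper lifts the canonical point $a\colon \Spec L \arr \cY$ non-canonically to $\cX$ (possible since $\cX \arr \cY$ is a $\gm$-torsor), passes to $L(t)$, and twists the lift by the class $(t) \in \H^1(L(t),\mmu_d)$ via the $\cB_K\mmu_d$-action on $\cX$; the resulting object $\alpha \in \cX(L(t))$ is the one shown to satisfy $\ed\alpha \geq \ed\cY + 1$. The proof that $\alpha$ does not descend to any $M$ with $\trdeg_K M \leq \ed\cY$ is a two-case analysis of the $t$-adic valuation $\nu$: if $\nu|_M$ is nontrivial, properness of $P$ lets one specialize to the residue field of $M$ and contradict $\ed a = \cd P$ (genericity of $L$); if $\nu|_M$ is trivial, base change to $L$ identifies $\cX_L \cong \cB_L\mmu_d$, and comparing valuations of the $\mmu_d$-classes (the class of $\alpha$ becomes $(t)$, so the descended class $m$ has $\nu_L(m) \equiv 1 \pmod d$) forces $\nu$ to be nontrivial on $M$ after all. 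Your phrase ``a valuation/specialization argument showing the $\mmu_d$-class cannot be rigidified'' gestures at this, but the construction that actually produces the extra $+1$ --- adjoining the transcendental $t$ and twisting by $(t)$ --- is absent, and without it the contradiction you aim for is false as stated.

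Your fallback route --- deducing (b) from Theorem~\ref{t.edGerbe}(b) --- is circular in this paper: Theorem~\ref{t.cdP} \emph{is} the paper's self-contained proof of Theorem~\ref{t.edGerbe} (that is the stated purpose of \S\ref{sect.gerbe}), so the latter cannot be invoked here without outsourcing the entire content to \cite{km2} or \cite{brv1}. Moreover, Theorem~\ref{t.edGerbe}(b) concerns $\mmu_{p^r}$ and prime-power index, whereas Theorem~\ref{t.cdP}(b) asserts $\ed\cX = \cd P + 1$ for arbitrary $d$ and arbitrary index, where $\cd P$ is not known to decompose over the primes (that is precisely the open conjecture \eqref{e.conjecture}); a restriction--corestriction reduction to the $p$-primary case would therefore not recover the general statement.
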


\begin{proof}
The last assertion follows from (a) and (b) by~\eqref{e.cd-primary}.

\smallskip
(a)  The functor $F_{\cY}\colon\Fields_K\to\Sets$ sends a field 
  $L/K$ to the empty set, if $P(L) = \emptyset$, and to a set 
  consisting of one point, if $P(L) \neq \emptyset$. In other words,
  $F_Y$ is the determination functor $D_P$ introduced above.
  The essential dimension of this functor is $\cd P $; 
 see~\eqref{e.determination}.

\smallskip (b) First note that the natural map $\cX\to\cY$ is of finite
type and representable of relative dimension $\le 1$. By
Theorem~\ref{thm.fiber-dimension}(b) we conclude that $\ed \cX \leq
\ed \cY +1$. By part (a) it remains to prove the opposite inequality,
$\ed \cX \geq \ed \cY  + 1$. We will do this by constructing an
object $\alpha$ of $\cX$ whose essential dimension is $\ge \ed \cY  + 1$.

We will view $\cX$ as a torsor for $\cB_{K}\mmu_d$ in the following
sense. There exist maps 
\begin{align*}
\cX\times\cB_{K}\mmu_d &\larr \cX \\
  \cX\times\cX     &\larr \cB_{K}\mmu_d
\end{align*}
satisfying various compatibilities, where the first map is the ``action"
of $\cB_{K}\mmu_d$ on $\cX$ and the second map is the ``difference'' of two
objects of $\cX$. For the definition and a discussion
of the properties of these maps, 
see~\cite[Chapter IV, Sections 2.3, 2.4 and 3.3]{G}.  (Note that, in
the notation of Giraud's book, $\cX\wedge\cB_{K}\mmu_d\cong \cX$ and the
action operation above arises from the map $\cX\times\cB_{K}\mmu_d\to
\cX\wedge\cB_{K}\mmu_d$ given in Chapter IV, Proposition 2.4.1.   The
difference operation, which we will not use here, arises similarly
from the fact that, in Giraud's notation, $\HOM(\cX,\cX)\cong\cB_{K}\mmu_d$.)

Let $L = K(P)$ be the function field of $P$. Since $L$ splits $P$, we 
have a natural map $a\colon\Spec L \to \cY$. Moreover since $L$ 
is a generic splitting field for $P$, 
\begin{equation} 
\label{e.cd1}
\ed a = \cd P  = \ed \cY,\\
\end{equation}
where we view $a$ as an object in $\cY$.
Non-canonically lift $a\colon\Spec L\to\cY$ to a map $\Spec L\to\cX$ (this can be 
done, because $\cX\to\cY$ is a $\gm$-torsor).  Let $\Spec L(t)\to\cB_{L}\mmu_d$ 
denote the map classified by 
$(t)\in \H^1(L(t), \mmu_d)=L(t)^{\times}/L(t)^{\times d}$.   
Composing these two maps, we obtain an object
$$
\alpha\colon\Spec L(t)\to \cX\times\cB_{L}\mmu_d\to\cX.
$$
in $\cX(L(t))$. Our goal is to prove that $\ed \alpha \ge \ed \cY + 1$.
In other words, given a diagram of the form 
\begin{equation} 
\label{e.oldclaim}
\xymatrix{
\Spec L(t)\ar[r]^{\quad \alpha} \ar[d] &\cX\\
\Spec M\ar[ru]^{\beta}              & \\
} 
\end{equation}
where $K \subset M \subset L$ is an intermediate field, we
want to prove the inequality $\trdeg_K(M) \ge \ed \cY + 1$. Assume the 
contrary:
there is a diagram as above with $\trdeg_K(M) \le   \ed \cY$.
Let $\nu \colon L(t)^* \to \ZZ$ be the usual discrete valuation 
corresponding to $t$ and consider two cases.

\smallskip
{\bf Case 1.} Suppose
the restriction $\nu_{| M}$ of $\nu$ to $M$ is non-trivial.
Let $M_0$ denote the residue field of $\nu$ and $M_{\geq 0}$ denote 
the valuation ring.  Since $\Spec M\to \cX\to\cY$, 
there exists an $M$-point of $P$.   Then by the
valuative criterion of properness for $P$, there exists an 
$M_{\geq 0}$-point and thus an $M_{0}$-point of $P$.  
Passing to residue fields, we obtain the diagram
\[ \xymatrix{
\Spec L\ar[r]^-{a}\ar[d] &\cY   \\
\Spec M_0\ar[ru]              & \\
}  
\]
which shows that 
$\ed a \le \trdeg_K M_0 = \trdeg_K M-1\leq\ed \cY -1$,
contradicting~\eqref{e.cd1}. 

\smallskip
{\bf Case 2.} Now suppose the restriction of $\nu$ to $M$ is trivial.
The map $\Spec L\to \cX$ sets up an
isomorphism $\cX_L\cong\cB_L\mmu_d$.  The map $\Spec L(t)\to\cX$ factors
through $\cX_L$ and thus induces a class in 
$\cB_{L} \mmu_d(L(t)) = \H^1(L(t),\mmu_d)$.  This class 
is $(t)$. Tensoring the diagram~\eqref{e.oldclaim}
with $L$ over $K$, we obtain
$$
\xymatrix{
\Spec L(t)\otimes L\ar[r]^{\alpha}\ar[d] & \cX_L\cong\cB_L \mmu_d\\
\Spec M\otimes L\ar[ru]^{\beta}             & \\
} 
$$ 
Recall that $L = K(P)$ is the function field of $P$. 
Since $P$ is absolutely irreducible, the tensor products
$L(t)\otimes L$ and $M \otimes L$ are fields.
The map $\Spec M\otimes L\to \cB_L\mmu_d$ is classified by some 
$m\in (M\otimes L)^{\times}/(M\otimes L)^{\times d}=\H^1(M\otimes L,\mmu_d)$.   
The image of $m$ in $L(t) \otimes L$ is equal to $t$ modulo
$d$-th powers. We will now derive a contradiction by comparing
the valuations of $m$ and $t$.

To apply the valuation to $m$, we lift $\nu$ from $L(t)$ 
to $L(t) \otimes L$. That is, we define $\nu_L$ as the valuation on
$L(t) \otimes L = (L \otimes L)(t)$ 
corresponding to $t$. Since
$\nu_L(t) = \nu(t) = 1$, we conclude that
$\nu_L (m)\equiv 1\pmod d$. This shows that
$\nu_L$ is not trivial on $M \otimes L$ and thus
$\nu$ is not trivial on $M$, contradicting our assumption. 
This contradiction completes the proof of part (b).
\end{proof}

\begin{corollary}
\label{cor.lower-bound2}
Let $ 1\arr Z \arr G \arr Q\arr 1$
denote an extension of group schemes over a field $k$ with $Z$ 
central and isomorphic to (a) $\gm$ or (b) $\mmu_{p^r}$ for 
some prime $p$ and some $r \ge 1$.
Let $\ind(G,Z)$ as the maximal value of
$\ind\bigl(\partial_K(t)\bigr)$ as
$K$ ranges over all field extensions of $k$ and $t$ ranges over all
torsors in $\H^1(K,Q)$. If $\ind(G, Z)$ is a prime power
(which is automatic in case (b)) then
\[
\ed_{k} G \geq \ind(G,Z) - \dim G.
\]
\end{corollary}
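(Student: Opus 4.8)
The plan is to reduce the inequality for $\ed_k G$ to the gerbe computation in Theorem~\ref{t.cdP} via the fiber-dimension machinery of Theorem~\ref{thm.fiber-dimension}. First I would unwind the definition of $\ind(G,Z)$: choose a field extension $K/k$ and a torsor $t \in \H^1(K,Q)$ at which $\ind(\partial_K(t))$ attains the value $\ind(G,Z)$, where $\partial_K \colon \H^1(K,Q) \to \H^2(K,Z)$ is the connecting map in the cohomology sequence induced by $1 \to Z \to G \to Q \to 1$. Let $x = \partial_K(t) \in \H^2(K,Z)$ and let $\cX \to \Spec K$ be the $Z$-gerbe associated with $x$; in case (a) this is the $\gm$-gerbe and in case (b) the $\mmu_{p^r}$-gerbe of Theorem~\ref{t.cdP}. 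Since $\ind(G,Z)$ is a prime power $p^a$, the last assertion of Theorem~\ref{t.cdP} (together with the fact that for $\gm$ one has $\ed\cY = \cd P = p^a-1$ and for $\mmu_{p^r}$ one has $\ed\cX = p^a$) shows that $\ed \cX \ge \ind(G,Z) - \dim Z$; note $\dim Z = 1$ in case (a) and $\dim Z = 0$ in case (b), which matches exactly the "$-1$'' versus "$+1$'' bookkeeping in Theorem~\ref{t.cdP}, so in both cases $\ed\cX = \ind(G,Z) - \dim Z$.

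Next I would produce a representable morphism of finite fiber dimension from $\cB_k G$ to something whose essential dimension is controlled by $\ed\cX$. The natural map to use is the quotient map $\cB_k G \to \cB_k Q$ induced by $G \twoheadrightarrow Q$; its fibers are torsors under $\cB_k Z$, so it is representable, locally of finite type, with fiber dimension $\le \dim Z$. Hence by Theorem~\ref{thm.fiber-dimension}(a), for any object $\eta$ of $\cB_k G$ lying over a given $Q$-torsor, $\ed_k \eta$ differs from the essential dimension of that $Q$-torsor by at most $\dim Z$. The point is to go the other way: the gerbe $\cX$ sits inside $\cB_K Z$-torsor theory as the gerbe classifying liftings of the fixed class $t \in \H^1(K,Q)$ to $\H^1(\,\cdot\,,G)$. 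Concretely, the fiber of $\cB_k G \to \cB_k Q$ over the point $\Spec K \to \cB_k Q$ classified by $t$ is precisely the gerbe $\cX$ (the obstruction to lifting $t$ to a $G$-torsor is exactly $x = \partial_K(t)$, and the gerbe of such liftings is the $Z$-gerbe banded by that class). So there is a representable morphism $\cX \to \cB_k G$, and for any object $\xi \in \cX(L)$ we have $\ed_k \xi \ge (\text{its essential dimension as an object of }\cX) \ge \dots$ — more precisely, descending $\xi$ as a $G$-torsor over $k$ also descends the underlying class in the gerbe, so $\ed_k(\xi \text{ in } \cB_k G) \ge \ed_k(\xi \text{ in } \cX) - (\text{contribution of } \trdeg_k K)$. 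This forces me to be careful: $\cX$ is a gerbe over $K$, not over $k$, so I must track the transcendence degree of $K/k$.

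The cleanest route is therefore: apply Theorem~\ref{thm.fiber-dimension}(a) to the representable morphism $p \colon [X_t/G] \hookrightarrow \cB_k G$ where $X_t \to \Spec K$ realizes the torsor $t$ — equivalently, work directly with the composite $\cX \to \cB_K G \to \cB_k G$, and use that $\cB_K G \to \cB_k G$ is the base change map of Proposition~\ref{p.extensions}. We get $\ed_k \xi \ge \ed_K(\xi) + 0$ is false in general, but what is true is that there exists $\xi$ with $\ed_k \xi \ge \ed_K \cX$ by choosing a generic object of $\cX$ and noting that any field of definition $M \subset L$ over $k$ for $\xi$-as-$G$-torsor, after base change to $K$, becomes a field of definition over $K$ for $\xi$-as-object-of-$\cX$; here $\trdeg_K(MK) \le \trdeg_k M$, so $\ed_K \cX \le \ed_k \xi$. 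Combined with $\ed_K \cX = \ind(G,Z) - \dim Z$ from the previous paragraph, and observing $\dim Z \le \dim G$ with the deficit $\dim G - \dim Z = \dim Q \ge 0$ only weakening the bound, we obtain $\ed_k G \ge \ed_k \xi \ge \ind(G,Z) - \dim Z \ge \ind(G,Z) - \dim G$.

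The main obstacle, and the step deserving the most care, is the identification of the fiber of $\cB_k G \to \cB_k Q$ over $t$ with the gerbe $\cX$ banded by $\partial_K(t)$, together with the verification that a field of definition for the resulting $G$-torsor over $k$ genuinely yields, after base change to $K$, a field of definition for the corresponding object of $\cX$ over $K$ — this is where the interplay between "descending over $k$'' and "descending over $K$'' must be handled, and it is exactly the content that makes Proposition~\ref{p.extensions} and the limit-preserving remarks relevant. Everything else is a formal assembly of Theorem~\ref{t.cdP}, Theorem~\ref{thm.fiber-dimension}, and the definition of $\ind(G,Z)$.
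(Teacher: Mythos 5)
Your overall architecture (realize $\partial_K(t)$ as a $Z$-gerbe $\cX$ over $K$, compute $\ed_K\cX$ from Theorem~\ref{t.cdP}, then transfer the bound to $\ed_k G$) is the same as the paper's, and your identification of $\cX$ with the fiber of $\cB_k G \arr \cB_k Q$ over the point classified by $t$ is the same stack as the paper's $[X/G]$. But the transfer step contains a genuine error. You claim that if $M$, $k \subseteq M \subseteq L$, is a field of definition of $\xi$ \emph{as a $G$-torsor}, then the compositum $MK$ is a field of definition of $\xi$ \emph{as an object of $\cX$}. An object of $\cX(L)$ is a $G$-torsor $E$ together with an isomorphism of $Q$-torsors $E/Z \cong X_L$ (equivalently, a $G$-equivariant map $E \arr X$); descending $E$ alone to $MK$ does not descend this extra datum, since the $Q$-torsor $E_0/Z$ over $MK$ need not be isomorphic to $X_{MK}$, and indeed $\cX(MK)$ may be empty. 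The intermediate conclusion you draw, $\ed_k G \geq \ind(G,Z) - \dim Z$, is false in general: take $G = \GL_4$, $Z = \gm$, $Q = \PGL_4$. Then $\ind(G,Z) = 4$ is a prime power, but $\ed\GL_4 = 0$ because $\GL_4$ is special, whereas your bound would give $\ed\GL_4 \geq 3$. Here every $\GL_4$-torsor over a field descends to $k$, yet the lifting gerbe of a generic division algebra of degree $4$ has essential dimension $3$ over its base field --- exactly the information your descent claim discards. (A side issue: $\cB_k G \arr \cB_k Q$ is not representable, its fibers being $Z$-gerbes rather than algebraic spaces, so Theorem~\ref{thm.fiber-dimension} does not apply to that map as you suggest; but this is not the main problem.)

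The loss of $\dim Q$ is therefore essential, not an optional weakening, and this is precisely how the paper argues: working over $K$ throughout, it presents the gerbe as $[X/G]$, where $X \arr \Spec K$ is the $Q$-torsor of $t$ and $G$ acts through $G \arr Q$, and applies Theorem~\ref{thm.fiber-dimension} --- in the packaged form of Corollary~\ref{cor.lower-bound1} over the base field $K$ --- to the forgetful morphism $[X/G] \arr \cB_K G_K$, which is representable of fiber dimension at most $\dim X = \dim Q$. Together with Theorem~\ref{t.edGerbe} this gives $\ed_K G_K \geq \ed_K\cX - \dim Q = \ind(G,Z) - \dim Z - \dim Q = \ind(G,Z) - \dim G$, and Proposition~\ref{p.extensions} then yields $\ed_k G \geq \ed_K G_K$. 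To repair your write-up you should replace the compositum-of-fields argument by this application of the fiber-dimension theorem to the morphism $\cX \arr \cB_K G_K$ that forgets the map to $X$.
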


\begin{proof} Choose $t \in H^1(K, Z)$ so that 
$\ind\bigl(\partial_K(t)\bigr)$ attains its maximal value,
$\ind(G, Z)$. Let $X \to \Spec(K)$ be the $Q$-torsor 
representing $t$.
Then $G$ acts on $X$ via the projection $G \to Q$,
and $[X/G]$ is the $Z$-gerbe over $\Spec(K)$ corresponding 
to the class $\partial_K(t)\in \H^2(K,Z)$.  
By Theorem~\ref{t.edGerbe}. 
\[ \ed [X/G] = \begin{cases} 
\text{$\ind\bigl(\partial_K(t)\bigr) - 1$ in case (a),} \\
\text{$\ind\bigl(\partial_K(t)\bigr)$ in case (b).} \end{cases} \]
Since $\dim(X) = \dim(Q)$, applying 
Corollary~\ref{cor.lower-bound1} to 
the $G$-action on $X$,  
we obtain
\[ \ed_{K} G_K \geq  \begin{cases} 
\text{$(\ind(G,Z) - 1) - 
\dim Q = \ind(G, Z) - \dim(G)$ in case (a),}  \\ 
\text{$\ind(G,Z) - \dim Q = \ind(G, Z) - \dim(G)$ in case (b).}  
\end{cases} \]
Since $\ed_k G \ge \ed_K G_K$ (see~\cite[Proposition 1.5]{bf1} or
our Proposition~\ref{p.extensions}), the corollary follows.
\end{proof}

\section{Gerbes over complete discrete valuation rings}

In this section we prove two results on the structure of étale
gerbes over complete discrete valuation rings that will be used in the
proof of Theorem~\ref{thm:generic}.

\subsection{Big and small étale sites} \label{s.BigToSmall}
Let $S$ be a scheme.  We let $\Sch/S$
denote the category of all schemes $T$ equipped with a morphism to
$S$.  As in~\cite{SGA4.2}, we equip $\Sch/S$ with the étale
topology.  Let $\et/S$ denote the full subcategory of $\Sch/S$
consisting of all schemes étale over $S$ (also with the
étale topology).  The site $\Sch/S$ is the \emph{big étale
  site} and the category $\et/S$ is the \emph{small étale
  site}.  We let $S_{\Et}$ denote the category of sheaves on $\Sch/S$
and $S_{\et}$ the category of sheaves on $\et/S$.  Since the obvious
inclusion functor from the small to the big étale site is
continuous, it induces a continuous morphism of sites $u\colon\et/S\arr
\Sch/S$ and thus a morphism $u\colon S_{\Et}\arr S_{\et}$.  Moreover, the
adjunction morphism $F\arr u_*u^*F$ is an isomorphism for $F$ a sheaf
in $S_{\et}$~\cite[VII.4.1]{SGA4.2}.  We can therefore regard
$S_{\et}$ as a full subcategory of $S_{\Et}$.

\begin{definition}
Let $S$ be a scheme. An \emph{étale gerbe} over $S$ is a separated locally finitely presented \dm stack over $S$ that is a gerbe in the étale topology.
\end{definition}

Let $\cX\arr S$ be an étale gerbe over a scheme $S$.  Then, by
definition, there is an étale atlas, i.e., a morphism $U_0\arr \cX$,
where $U_0\arr S$ is surjective, étale and finitely presented over
$S$.  This atlas gives rise to a groupoid $\cG\eqdef
[U_1\eqdef U_0\times_{\cX}U_0\double U_0]$ in which each term is étale
over $S$.  Since $\cX$ is the stackification of $\cG$ which is a
groupoid on the small étale site $S_{\et}$, it follows that
$\cX=u^*\cX'$ for $\cX'$ a gerbe on $S_{\et}$.  In other words, we have
the following proposition.

\begin{proposition}\label{p.BigToSmall}  
  Let $\cX\arr S$ be an étale gerbe over a scheme $S$.  Then there is
  a gerbe $\cX'$ on $S_{\et}$ such that $\cX=u^*\cX'$.
\end{proposition}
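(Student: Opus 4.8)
The plan is to unwind the definitions and track the gerbe through the comparison of the big and small étale sites, using only the formal properties of the morphism of sites $u\colon S_{\Et}\arr S_{\et}$ recalled in \S\ref{s.BigToSmall}. First I would fix an étale atlas $U_0\arr\cX$ over $S$; since $\cX$ is an étale gerbe this atlas may be taken to be surjective, étale and of finite presentation over $S$, so that $U_0$, and hence also $U_1\eqdef U_0\times_{\cX}U_0$, lie in $\et/S$. This produces the groupoid $\cG=[U_1\double U_0]$ all of whose terms are objects of the small site $S_{\et}$. The point is that $\cX$ is the stackification of the prestack associated with $\cG$, and stackification can be computed either on $\Sch/S$ or on $\et/S$.

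The key step is then to compare the two stackifications. Let $\cX'$ denote the stack on $S_{\et}$ obtained by stackifying the prestack $[U_1\double U_0]$ on the small site. I claim $\cX=u^*\cX'$, where $u^*$ is the pullback of stacks along $u\colon\Sch/S\arr\et/S$ (equivalently, restriction of a sheaf/stack along the inclusion of the small site into the big site, viewed in the direction that makes $u_*u^*\simeq\id$ as recalled from \cite[VII.4.1]{SGA4.2}). Since $u$ is a continuous morphism of sites for which covering families pull back to covering families, $u^*$ commutes with stackification; applying $u^*$ to the prestack $[U_1\double U_0]$ on $S_{\et}$ gives the corresponding prestack on $\Sch/S$, whose stackification is $\cX$ by construction of the atlas. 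Hence $\cX=u^*(\text{stackification on }S_{\et})=u^*\cX'$. Finally I would note that $\cX'$ is again a gerbe: it is locally non-empty and locally connected in the étale topology because $\cX$ is, and these conditions are detected on the covering $U_0\arr S$, which already lives in the small site; alternatively, $u^*$ is fully faithful on the subcategory $S_{\et}\subset S_{\Et}$, so the gerbe axioms for $\cX=u^*\cX'$ descend to $\cX'$.

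I expect the main obstacle to be purely bookkeeping: making precise the statement that $u^*$ commutes with stackification of prestacks, i.e., that the $2$-categorical pullback of the stack associated with a groupoid object is the stack associated with the pulled-back groupoid object. This is standard (stackification is a left adjoint and $u^*$ preserves colimits and covers), but one must be careful that the relevant stackification functors on the big and small sites are compatible under $u$, and that "gerbe in the étale topology'' is a condition that can be checked after restricting to the small site. Once that compatibility is granted, the proposition is immediate from the observation that the atlas $U_0\arr\cX$ and the resulting groupoid $\cG$ are already defined over $\et/S$.
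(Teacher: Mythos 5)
Your argument is essentially identical to the paper's: both take the surjective \'etale atlas $U_0\arr\cX$, form the groupoid $[U_1\double U_0]$ whose terms are \'etale over $S$, and conclude $\cX=u^*\cX'$ by observing that stackification of this small-site groupoid is compatible with pullback along $u$. The extra care you take about $u^*$ commuting with stackification and about $\cX'$ being a gerbe is exactly the bookkeeping the paper leaves implicit, so there is nothing to change.
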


If $S$ is a henselian trait 
(i.e., the spectrum of a henselian discrete valuation ring) 
we can do better:

\begin{proposition}\label{p.Hensel}
  Let $S$ be a henselian trait and
  $f\colon T\arr S$ be a surjective étale morphism.  Then there is an open 
  component $T'$ of $T$ such that $f_{|T'}\colon T'\arr S$ is a finite
  étale morphism. 
\end{proposition}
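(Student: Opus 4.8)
The plan is to reduce immediately to the affine situation and then extract the desired finite component by a standard structure argument for étale algebras over henselian local rings. Write $S = \Spec R$ with $R$ a henselian discrete valuation ring, let $\frm$ be its maximal ideal and $\kappa = R/\frm$ the residue field, and let $s \in S$ be the closed point. First I would replace $T$ by an affine open neighborhood of a chosen point $t \in T$ lying over $s$: since $f$ is étale it is in particular locally of finite presentation and open, so it suffices to produce, Zariski-locally on $T$ around such a $t$, an open subscheme $T'$ containing $t$ that is finite over $S$. (Surjectivity of $f$ guarantees at least one such $t$ exists; its image being the whole of $S$ is automatic once $T' \to S$ is finite, flat — étale — and has nonempty, hence all, fibers, because $S$ is local.) So assume $T = \Spec A$ with $A$ a finite-type étale $R$-algebra, and $t$ corresponds to a prime $\frq$ of $A$ lying over $\frm$.

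The key step is the local structure of $A$ at $\frq$. Because $R$ is henselian local and $A$ is étale and of finite type over $R$, the localization behaves well: the étale $R$-algebra $A$ decomposes, after possibly shrinking $\Spec A$ to a standard-open neighborhood of $\frq$, as a product in which one factor is \emph{finite} étale over $R$ and contains the point $\frq$. Concretely, I would argue as follows. The fiber $A \otimes_R \kappa$ is a finite étale $\kappa$-algebra, hence a finite product of finite separable field extensions of $\kappa$; let $\kappa'$ be the factor corresponding to $\frq$, a finite separable extension of $\kappa$, and let $\bar e \in A\otimes_R\kappa$ be the idempotent projecting onto $\kappa'$. Since $R$ is henselian and $A$ is étale (hence the pair $(A,\frm A)$ is henselian along the closed fiber in the appropriate sense), the idempotent $\bar e$ lifts to an idempotent $e \in A$; replacing $A$ by $A_e = eA$ we may assume the closed fiber of $\Spec A$ is connected, i.e. equals $\Spec \kappa'$ with $\kappa'/\kappa$ finite separable. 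Now $A$ is étale over $R$ with connected closed fiber $\Spec\kappa'$; by the structure theory of étale algebras over a henselian local ring (e.g. Milne, \emph{Étale Cohomology}, I.4, or EGA~IV 18.5--18.8), such an $A$ is automatically \emph{finite} over $R$ — indeed it is the unique finite étale $R$-algebra lifting $\kappa'$, namely $R[x]/(g(x))$ for $g$ a lift of the minimal polynomial of a primitive element of $\kappa'$, so $A$ is free of rank $[\kappa':\kappa]$ over $R$. Then $T' \eqdef \Spec A_e$ (more precisely the open component of $T$ containing $t$ that we have carved out) is finite étale over $S$, as required.

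The main obstacle — and the step deserving the most care — is the passage from "étale over $R$ with connected closed fiber" to "finite over $R$." One must be slightly careful that $\Spec A$ could a priori have components meeting the generic fiber but not the closed fiber, which is exactly why one first localizes around $t$ and then uses henselianity to split off the idempotent: after that splitting, the closed fiber is a single point, and the henselian hypothesis forces the whole of $\Spec A_e$ to be supported over that point in the sense that it is finite. I would cite the relevant statement rather than reprove it: over a henselian local ring, a quasi-finite separated scheme decomposes as a finite part plus a part with empty closed fiber (Zariski's main theorem in Grothendieck's form, EGA~IV 18.5.11), and an étale morphism is quasi-finite, so the finite part is open, closed, contains all points over $s$, and is finite étale over $S$; taking $T'$ to be this finite part finishes the proof. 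A short remark that surjectivity of $f$ is used only to ensure the closed fiber of $T$ is nonempty (so that the finite part $T'$ is itself surjective onto $S$, $S$ being local) would complete the write-up.
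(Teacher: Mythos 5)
Your concluding, citation-based argument is correct and is essentially the paper's own proof: the paper simply picks $t\in T$ over the closed point $s$ (this is where surjectivity is used), observes that $f$ is quasi-finite at $t$ because it is \'etale (EGA~IV 17.6.1), and invokes EGA~IV 18.5.11 to conclude that $T'=\Spec \cO_{T,t}$ is open in $T$ and finite \'etale over $S$. Your affine reduction followed by the same appeal to 18.5.11 (legitimate there, since an affine chart is separated and of finite presentation over $S$, hence quasi-finite) arrives at the same place.

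The intermediate ``structure theory'' paragraph, however, is not correct as written, and you should drop it (it does no work once you cite 18.5.11). Two claims there are problematic. First, for a non-finite \'etale $R$-algebra $A$ the pair $(A,\frm A)$ is in general not a henselian pair, so the lifting of the idempotent $\bar e$ is not ``free''; it is in fact a consequence of the decomposition theorem you cite later, not an input to it. Second, and more seriously, the assertion that an \'etale $R$-algebra whose closed fibre is a single point $\Spec\kappa'$ is automatically finite over $R$ is false: take $A=R\times R[1/\pi]$ with $\pi$ a uniformizer. This is an \'etale $R$-algebra of finite type, its closed fibre is $\Spec\kappa$ (one point, so your idempotent step may return $e=1$ and leave $A$ unchanged), yet $A$ is not finite over $R$. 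The missing ingredient is precisely the discarding of the part with empty closed fibre, i.e.\ the decomposition of EGA~IV 18.5.11; connectedness of the closed fibre cannot substitute for it. So the clean write-up is your last paragraph (or the paper's even shorter version, applied directly at the point $t$), with the idempotent detour removed.
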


\begin{proof}
 Let $s$ denote the closed point of $S$.  Since $f$ is surjective,
 there exists a $t\in T$ such that $f(t)=s$.  Since $f$ is étale,
 $f$ is quasi-finite at $t$ by~\cite[17.6.1]{EGA4.4}.  Now, it follows
from~\cite[18.5.11]{EGA4.4} that $T'\eqdef\Spec \cO_{T,t}$ is an open
component of $T$ which is finite and étale.
\end{proof}

Now for a scheme $S$, let $\fet/S$ denote the category of finite
étale covers $T\arr S$.  We can consider $\fet/S$ as a site in the
obvious way.  Then the inclusion morphism induces a continuous
morphism of sites $v\colon \et/S\arr \fet/S$.  If $S$ is a henselian trait
with closed point $s$, then the inclusion morphism $i\colon s\arr S$ induces
an equivalence of categories $i^*\colon \fet/S\arr \fet/s$.  Since the
site $\fet/s$ is equivalent to $s_{\et}$, this induces the \emph{specialization
morphism} $\spc\colon S_{\et}\arr s_{\et}$, which is inverse to the
inclusion morphism $i\colon s_{\et}\arr S$; cf.~\cite[p.~89]{SGA72}.  
Let $\tau=\spc\circ u\colon S_{\Et}\arr s_{\et}$.

\begin{corollary}\label{c.ReduceToBG}
  Let $\cX\arr S$ be an étale gerbe over a henselian trait $S$ with
  closed point $s$.  Then there is a gerbe $\cX''$ over  $s_{\et}$ 
  such that $\cX=\tau^*\cX''$.  
\end{corollary}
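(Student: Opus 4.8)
The plan is to combine Proposition~\ref{p.BigToSmall} with Proposition~\ref{p.Hensel} in order to present $\cX$ by a groupoid all of whose terms are \emph{finite} étale over $S$, so that the presentation — and hence the gerbe — descends to $s_\et$ through the equivalence $\fet/S\simeq\fet/s$. By Proposition~\ref{p.BigToSmall} we may write $\cX=u^*\cX'$, where $\cX'$ is the gerbe on $S_\et$ obtained as the stackification of the groupoid $\cG=[U_1\eqdef U_0\times_{\cX}U_0\double U_0]$ attached to an étale atlas $U_0\arr\cX$ with $U_0\arr S$ surjective, étale and finitely presented. Since $\tau=\spc\circ u$, it suffices to produce a gerbe $\cX''$ over $s_\et$ with $\spc^*\cX''\cong\cX'$; then $\cX=u^*\cX'=u^*\spc^*\cX''=\tau^*\cX''$.

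The first step is to replace $U_0$ by a finite étale atlas. By Proposition~\ref{p.Hensel} there is an open component $U_0'\subset U_0$ with $U_0'\arr S$ finite étale; since $S$ is connected and $U_0'$ is nonempty (it meets the fibre over $s$), the morphism $U_0'\arr S$ is moreover surjective, so $U_0'\in\fet/S$. The composite $U_0'\arr U_0\arr\cX'$ is representable and étale, being a composite of an open immersion and a representable étale morphism, and it is still surjective: on geometric points, a point of $\cX'$ over $\bar v\arr S$ can be lifted along the surjection $U_0'\arr S$ to a point $\bar v\arr U_0'$, and over $\bar v$ the gerbe $\cX'$ has a single isomorphism class of objects, which lies in the image of $(U_0')_{\bar v}\arr\cX'_{\bar v}$. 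Hence $U_0'\arr\cX'$ is again an atlas.

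The second step is to check that $U_1'\eqdef U_0'\times_{\cX'}U_0'$ also lies in $\fet/S$. Via either projection it is étale over $U_0'$, hence étale over $S$; and it is the pullback of the diagonal $\cX'\arr\cX'\times_S\cX'$ along $U_0'\times_SU_0'\arr\cX'\times_S\cX'$. Since $\cX'$ is a separated \dm stack over $S$, this diagonal is unramified and proper, hence quasi-finite and proper, hence finite; therefore $U_1'\arr U_0'\times_SU_0'$ is finite, and composing with the finite étale morphism $U_0'\times_SU_0'\arr S$ shows that $U_1'\arr S$ is finite étale. Thus $\cG'\eqdef[U_1'\double U_0']$ is a groupoid in $\fet/S$ whose stackification on $S_\et$ is again $\cX'$.

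Finally, transport $\cG'$ through the chain of equivalences $\fet/S\simeq\fet/s\simeq s_\et$ to obtain a groupoid $\cG''$ in $s_\et$, and let $\cX''$ be its stackification, a gerbe over $s_\et$. By the very construction of the specialization morphism, $\spc^*$ carries objects of $\fet/s$ back to the corresponding objects of $\fet/S\subset S_\et$, so $\spc^*\cG''\cong\cG'$ as groupoids on $S_\et$; since $\spc^*$ commutes with stackification, $\spc^*\cX''$ is the stackification of $\cG'$, which is $\cX'$. This yields $\cX'\cong\spc^*\cX''$ and hence $\cX\cong\tau^*\cX''$, as desired. The one place that genuinely needs care is the previous paragraph's claim that $U_1'\arr S$ is finite — i.e. extracting finiteness of the diagonal of $\cX'$ from the separatedness of the étale gerbe, via the implication ``unramified $+$ proper $\Rightarrow$ finite'' — together with the verification that $\spc^*$ sends the stackification of $\cG''$ to the stackification of $\cG'$ rather than merely matching their underlying presheaves of groupoids; both points are routine, but should be spelled out.
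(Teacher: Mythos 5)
Your proof is correct and follows essentially the same route as the paper: shrink the atlas to a finite \'etale one via Proposition~\ref{p.Hensel}, use separatedness of the gerbe to get $U_1'$ finite, and descend the resulting groupoid through the equivalence $\fet/S\simeq\fet/s$. You merely spell out the routine verifications (surjectivity of the restricted atlas, finiteness of the diagonal, compatibility of $\spc^*$ with stackification) that the paper's proof leaves implicit.
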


\begin{proof}
  Since $\cX\arr S$ is an étale gerbe, there is an étale atlas
  $X_0\arr S$ of $\cX$.  By Proposition~\ref{p.Hensel} we may assume 
that $X_0$ is finite over $S$. Then 
$X_{1} \eqdef X_{0}\times_{\cX} X_{0}$ is also finite, 
because $\cX$ is separated, by hypothesis.   Now the equivalence 
of categories $i^*\colon \fet/S\arr \fet/s$ produces an gerbe
$\cX''$ over $s_{\et}$ such that $\cX=\tau^*\cX''$.  
\end{proof}

\subsection{Group extensions and gerbes}

Let $k$ be a field with separable closure $\overline{k}$ and absolute Galois group $G=\Gal(\overline k/k)$.
Let 
\begin{equation}
  \label{e.extension}
  1\arr F\stackrel{i}{\larr} E\stackrel{p}{\larr} G\arr 1
\end{equation}
be an extension of profinite groups with $F$ finite and all maps
continuous.  From this data, we can construct a gerbe $\cX_E$
over $(\Spec k)_{\et}$.   To determine the gerbe it is enough to give
its category of sections over $\Spec L$ where $L/k$ is a finite
separable extension.  Let $K=\{g\in G\, |\, g(\alpha)=\alpha, \alpha\in
L\}$.  Then the objects of the category $\cX_E(L)$ are the solutions
of the embedding problem given by~\eqref{e.extension}.  That is, an
object of $\cX_E(L)$ is a continuous homomorphism $\sigma\colon  K\arr E$
such that $p\circ \sigma (k)=k$ for $k\in K$.   If $s_i\colon K\arr E, i=1,2$ are
two objects in $\cX_E(L)$ then a morphism from $s_1$ to $s_2$ is an
element $f\in F$ such that $fs_1f^{-1}=s_2$; cf.~\cite[p.~581]{DebesDouai}.

By the results of Giraud~\cite[Chapter VIII]{G}, it is easy to
see that any gerbe $\cX\arr \Spec k$ with finite inertia arises from a
sequence~\eqref{e.extension} as above.    We explain how to get the
extension:  Given
$\cX$, we can find a separable Galois extension $L/k$ and an object
$\xi\in \cX(L)$.  This gives an extension of groups $\Aut_{\cX}(\xi)\arr
\Aut_{\Spec k}(\Spec L)=\Gal(L/k)$.  Pulling back this extension via
the map $G=\Gal(k)\arr \Gal(L/k)$ gives the desired 
sequence~\eqref{e.extension}.

Now, suppose that $E$ is as
in~\eqref{e.extension}.  Let $L/k$ be a field extension, which is
separable but not necessarily finite.  Let $\overline L$ denote a fixed
separable closure of $L$ and let $\overline k$ denote the separable closure
of $k$ in $\overline L$.  Then there is an obvious map $r\colon \Gal(\overline L/L)\arr
\Gal(\overline k/k)$.   Let $u\colon (\Spec k)_{\Et}\arr (\Spec k)_{\et}$ denote
the functor of~\S\ref{s.BigToSmall}.   Then $u^*\cX(L)$ has the same
description as in the case where $L$ is a finite extension of $k$.  In
other words, we have the following proposition.

\begin{proposition}\label{p.GerbeCategory}
   Let $L/k$ be a separable extension and let $\cX_E$ be the gerbe defined
  above.  Then the objects of the category $u^*\cX_E(L)$ are the
  morphisms $s\colon \Gal(L)\arr E$ making the following diagram
  commute.
$$
\xymatrix{
            &            &              &  \Gal(L)\ar[ld]_s\ar[d]^r & \\
 1\ar[r] & F\ar[r] & E \ar[r]  & \Gal(k) \ar[r] & 1\\
}
$$
Moreover, if $s_i, i=1,2$ are two objects in $u^*\cX_E(L)$, then the
morphisms from $s_1$ to $s_2$ are the elements $f\in F$ such 
that $fs_1f^{-1}=s_2$. 
\qed
\end{proposition}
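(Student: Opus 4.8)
The plan is to compute $(u^*\cX_E)(\Spec L)$ by first descending the problem to the small \'etale site of $\Spec L$, and then identifying the gerbe one finds there with the gerbe attached to a pulled-back group extension, whose sections are given by splittings essentially by definition.

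First I would set up the reduction to the small site. Write $g\colon\Spec L\arr\Spec k$ for the structure morphism, $u_L\colon(\Spec L)_{\Et}\arr(\Spec L)_{\et}$ for the big-to-small comparison of \S\ref{s.BigToSmall}, and $g_{\Et}$, $g_{\et}$ for the induced morphisms of the big and small \'etale sites. Using that $u^*$ commutes with pullback along $g$ (the standard compatibility of the big/small comparison with base change; cf.~\cite{SGA4.2}) one gets a canonical equivalence $g_{\Et}^*u^*\cX_E\simeq u_L^*g_{\et}^*\cX_E$. Evaluating this stack on the terminal object $\Spec L$ of $(\Spec L)_{\Et}$ and using that restricting a stack from the small to the big \'etale site does not change its value on the base scheme (the stack analogue of \cite[VII.4.1]{SGA4.2}; see also Proposition~\ref{p.BigToSmall}) gives
\[
 (u^*\cX_E)(\Spec L)\;\simeq\;(g_{\et}^*\cX_E)(\Spec L),
\]
so it suffices to describe the gerbe $g_{\et}^*\cX_E$ on $(\Spec L)_{\et}$ and its global sections.

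Next I would identify $g_{\et}^*\cX_E$. The small \'etale topos of $\Spec L$ (resp.\ of $\Spec k$) is the classifying topos of $\Gal(\overline{L}/L)=\Gal(L)$ (resp.\ of $\Gal(\overline{k}/k)=\Gal(k)$), and under these identifications $g_{\et}$ is the morphism of classifying topoi induced by $r\colon\Gal(L)\arr\Gal(k)$. Via Giraud's correspondence between gerbes with finite inertia and group extensions (\cite[Chapter VIII]{G}, as recalled just before \eqref{e.extension}), pullback of gerbes along $g_{\et}$ corresponds to pullback of extensions along $r$; hence $g_{\et}^*\cX_E$ is the gerbe $\cX_{r^*E}$ attached to
\[
 1\arr F\arr r^*E\arr\Gal(L)\arr 1,\qquad r^*E\eqdef E\times_{\Gal(k)}\Gal(L),
\]
the fibre product being taken along $p\colon E\arr\Gal(k)$ of \eqref{e.extension} and along $r$. (If one prefers to argue directly: $g_{\et}^*\cX_E$ and $\cX_{r^*E}$ are both banded by $F$ with the $\Gal(L)$-action obtained from $r$ and the band of $\cX_E$, and both become canonically trivial over $\Spec\overline{L}$ with descent datum along $\Gal(L)$ given by $r^*E$, so they are equivalent.) Finally I would read off the global sections of $\cX_{r^*E}$ straight from the construction of the gerbe attached to an extension, exactly as in the finite case recalled above: an object of $\cX_{r^*E}(\Spec L)$ is a continuous section $\sigma\colon\Gal(L)\arr r^*E$ of $r^*E\arr\Gal(L)$, and a morphism $\sigma_1\arr\sigma_2$ is an $f\in F$ with $f\sigma_1 f^{-1}=\sigma_2$. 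Projecting $r^*E=E\times_{\Gal(k)}\Gal(L)$ onto its first factor $E$ identifies these sections with the continuous homomorphisms $s\colon\Gal(L)\arr E$ satisfying $p\circ s=r$, i.e.\ with the maps making the displayed triangle commute, compatibly with the action of $F$ by conjugation. This is the asserted description.

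The main obstacle is the first step. Because $L/k$ need not be algebraic, $\Spec L$ is not a cofiltered limit of schemes \'etale over $\Spec k$, so one cannot simply pass to a limit over finite subextensions and invoke the already-established finite case; instead one must use the compatibility of the big/small comparison $u$ with base change along $\Spec L\arr\Spec k$. Once that is in hand, the identification of the pulled-back gerbe with $\cX_{r^*E}$ and the reading-off of its sections are formal consequences of Giraud's dictionary and of the very definition of $\cX_E$.
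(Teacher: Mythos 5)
Your argument is correct. Note, though, that the paper offers no proof at all: the statement carries a \qed and is presented as immediate from the construction of $\cX_E$ and of $u^*$, so there is no "paper route" to match. Your formalization---base-change compatibility $g_{\Et}^{*}u_k^{*}\simeq u_L^{*}g_{\et}^{*}$ of the big/small comparison, identification of $(\Spec L)_{\et}$ with the classifying topos of $\Gal(L)$, Giraud's dictionary turning $g_{\et}^{*}\cX_E$ into the gerbe of the pulled-back extension $1\arr F\arr E\times_{\Gal(k)}\Gal(L)\arr\Gal(L)\arr 1$, and then reading off global sections as continuous splittings modulo $F$-conjugation---is a legitimate and complete way to fill this in; your point that one cannot simply take a limit over finite subextensions (since $L/k$ need not be algebraic) correctly identifies why the base-change step is the crux. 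What the authors presumably had in mind, and what your structural argument packages, is the more hands-on unwinding of $u^*$: an object of $(u^{*}\cX_E)(\Spec L)$ is, after stackification, descent data on a finite separable cover $\Spec M\arr\Spec L$ for objects pulled back from $\cX_E(k')$ along finite separable subfields $k'\subseteq M$, and such data glue precisely to a continuous homomorphism $s\colon\Gal(L)\arr E$ with $p\circ s=r$, with morphisms given by $F$-conjugation. That direct computation avoids invoking the band/extension dictionary for the pullback, at the cost of bookkeeping; your version buys a cleaner statement (pullback of gerbes corresponds to pullback of extensions) at the cost of citing more of Giraud's machinery. Either way the content is the same, and your proof is sound.
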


\subsection{Splitting the inertia sequence}\label{sub.splitting}

We begin by recalling some results and notation from Serre's chapter
in~\cite{gms}.

Let $A$ be a discrete valuation ring.  Write $S=S_A$ for $\Spec A$,
$s=s_A$ for the closed point in $S$ and $\eta=\eta_A$ for the generic
point.  When $A$ is the only discrete valuation ring under
consideration, we suppress the subscripts.  If $A$ is henselian, then
the choice of a separable closure $k(\overline{\eta})$ of $k(\eta)$
induces a separable closure of $k(s)$ and a map
$\Gal(k(\overline{\eta})) \arr\Gal(k(\overline{s}))$ between 
the absolute Galois
groups.  The kernel of this map is called the \emph{inertia}, written
as $I=I_A$.  If $\chr k(s)=p>0$, then we set $I^{\rmw}=I^{\rmw}_A$ 
equal to the unique $p$-Sylow subgroup of $I$; 
otherwise we set $I^{\rmw}=\{1\}$.  The
group $I^{\rmw}$ is called the \emph{wild inertia}.  The group
$I_{\rmt}=I_{A,t}\eqdef I/I^{\rmw}$ is called the \emph{tame inertia} and the
group $\Gal(k(\eta))_{\rmt}\eqdef \Gal(k(\eta))/I^{\rmw}$ is called the \emph{tame
Galois group}.   We therefore have the following exact sequences:
\begin{align}
  &1\larr I\larr \Gal(k(\eta))\larr \Gal(k(s))\larr 1 \quad \text{and}
\label{e.InertiaExact}\\
  &1\larr I_{\rmt}\larr \Gal(k(\eta))_{\rmt}\larr \Gal(k(s))\larr 1
\label{e.TameExact} \, . 
\end{align}
The sequence~\eqref{e.InertiaExact}
 is called the \emph{inertia exact sequence} and~\eqref{e.TameExact} 
the~\emph{tame inertia exact sequence}.

For each prime $l$, set $\displaystyle\ZZ_l(1)=\projlim
\mmu_{l^n}$ so that 
$$\prod_{l\neq p} \ZZ_l(1)=\projlim_{p \, \nmid \, n}\mmu_n.$$  
Then there is a canonical isomorphism $c\colon I_{\rmt}\arr
\prod_{l\neq p}\ZZ_l(1)$~\cite[p.~17]{gms}.  To explain this isomorphism, let 
$g\in I_{\rmt}$ and let $\pi^{1/n}$ be an $n$-th root of a uniformizing parameter
$\pi\in A$ with $n$ not divisible by $p$.  Then the image of $c(g)$ in $\mmu_n$
is $g(\pi^{1/n})/\pi^{1/n}$.  

\begin{proposition}\label{p.Split}
  Let $A$ be a henselian discrete valuation ring.  Then the
  sequence~\eqref{e.InertiaExact} is split. 
\end{proposition}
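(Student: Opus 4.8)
The plan is to reformulate the splitting of~\eqref{e.InertiaExact} as the existence of a subfield of $k(\overline\eta)$ complementary to the inertia, to split off the tame part explicitly, and then to produce the remaining wild piece by a Zorn's Lemma argument. Throughout write $K=k(\eta)$, let $\overline K=k(\overline\eta)$ be the chosen separable closure, let $\Kunr\subseteq\overline K$ be the maximal unramified extension (so that $\Gal(\Kunr/K)\cong\Gal(k(s))$, since $A$ is henselian), and let $K^{\mathrm t}\subseteq\overline K$ be the maximal tamely ramified extension; thus $I=\Gal(\overline K/\Kunr)$, $I^{\rmw}=\Gal(\overline K/K^{\mathrm t})$, and $I_{\rmt}=\Gal(K^{\mathrm t}/\Kunr)$. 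Since $I$ is normal in $\Gal(\overline K/K)$, the product of a closed subgroup with $I$ is again a closed subgroup, so passing to fixed fields identifies splittings of~\eqref{e.InertiaExact} with intermediate fields $K\subseteq\Phi\subseteq\overline K$ satisfying $\Phi\cap\Kunr=K$ and $\Phi\cdot\Kunr=\overline K$. To handle the tame part, fix a uniformizer $\pi\in A$ — also a uniformizer of $\Kunr$ — and a compatible system of roots $\pi^{1/n}$ for all $n$ prime to $p\eqdef\chr k(s)$; when $\chr k(s)=0$ there is no constraint on $n$, $I^{\rmw}=1$, and $K^{\mathrm t}=\overline K$. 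Then $K^{\mathrm t}=\Kunr(\pi^{1/n}:p\nmid n)$, and every $\bar\sigma\in\Gal(\Kunr/K)=\Gal(k(s))$ extends uniquely to an automorphism of $K^{\mathrm t}$ fixing every $\pi^{1/n}$, the extension being well defined because $X^n-\pi$ is the minimal polynomial of $\pi^{1/n}$ over $\Kunr$ (as $v(\pi)=1$ generates the value group of $\Kunr$). By uniqueness $\bar\sigma\mapsto\widetilde{\bar\sigma}$ is a homomorphism, giving a splitting of~\eqref{e.TameExact}; in particular this already proves the proposition when $\chr k(s)=0$.

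Assume now $p=\chr k(s)>0$, and let $K^\flat$ be the fixed field of the image $H\cong\Gal(k(s))$ of the tame section just built. Then $K^\flat\cap\Kunr=K$, $K^\flat\cdot\Kunr=K^{\mathrm t}$, and the residue field of $K^\flat$ is $k(s)$ (a nontrivial separable residue extension would produce a nontrivial unramified subextension of $K^\flat/K$), so $(K^\flat)^{\mathrm{unr}}=K^{\mathrm t}$; moreover $\Gal(\overline K/K^\flat)$ lies in an exact sequence $1\to I^{\rmw}\to\Gal(\overline K/K^\flat)\to\Gal(k(s))\to 1$ with $I^{\rmw}$ pro-$p$. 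By the same fixed-field dictionary, now applied to $K^\flat$, it remains to find an intermediate field $K^\flat\subseteq\Phi\subseteq\overline K$ with $\Phi\cap K^{\mathrm t}=K^\flat$ and $\Phi\cdot K^{\mathrm t}=\overline K$. The set of intermediate fields $\Phi$ with $\Phi\cap K^{\mathrm t}=K^\flat$ is non-empty and stable under directed unions, so Zorn's Lemma yields a maximal element $\Phi_0$; the crux is to show $\Phi_0\cdot K^{\mathrm t}=\overline K$.

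Suppose not. Because $\Phi_0/K^\flat$ has no nontrivial unramified subextension, the residue field of $\Phi_0 K^{\mathrm t}$ is purely inseparable over $k(\overline s)$, hence separably closed. Since $\Gal(\overline K/\Phi_0 K^{\mathrm t})$ is a nontrivial closed subgroup of $I^{\rmw}$, it is a nontrivial pro-$p$ group, so there is an intermediate $P$ with $P/\Phi_0 K^{\mathrm t}$ Galois of degree $p$; being of degree $p=\chr k(s)$ over a field with separably closed residue field, it is an Artin--Schreier extension $y^p-y=a$, with $a\in\Phi_0 K^{\mathrm t}=\bigcup_L\Phi_0 L$ (the union over finite tamely ramified $L/K^\flat$ inside $K^{\mathrm t}$), and it is wildly ramified. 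The remaining — and main — step is to descend the Artin--Schreier datum $a$ from $\Phi_0 L$ to $\Phi_0$, yielding a finite $\Phi_1/\Phi_0$ with $\Phi_1\supsetneq\Phi_0$ and $\Phi_1\cap K^{\mathrm t}=K^\flat$, which contradicts the maximality of $\Phi_0$. Disjointness from $K^{\mathrm t}$ survives the descent precisely because neither growth of the value group nor purely inseparable growth of the residue field can take place inside $K^{\mathrm t}$, whose value group is $\ZZ[1/n:p\nmid n]$ and whose residue extensions are separable; carrying this out requires some care because $k(s)$ need not be perfect, and follows the analysis of wild ramification in Serre's chapter of~\cite{gms}. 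Granting it, $\Phi_0$ furnishes the desired section, so~\eqref{e.InertiaExact} splits.
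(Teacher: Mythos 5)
Your tame step is correct and is essentially the paper's own: both split~\eqref{e.TameExact} by choosing a compatible system of roots $\pi^{1/n}$ ($p\nmid n$) of a uniformizer and identifying $\Gal(k(s))$ with the closed subgroup of $\Gal(k(\eta))_{\rmt}$ fixing all of them. The gap is in the wild step. After reducing to the extension $1\to I^{\rmw}\to\Gal(k(\overline\eta)/K^\flat)\to\Gal(k(s))\to 1$ with pro-$p$ kernel, the paper follows Serre: since $\chr k(s)=p$, the $p$-cohomological dimension of $\Gal(k(s))$ is $\leq 1$, and this is exactly what permits lifting a homomorphism through the quotient by the pro-$p$ group $I^{\rmw}$. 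Your Zorn argument on fields $\Phi$ with $\Phi\cap K^{\mathrm t}=K^\flat$ is only the formal bookkeeping of that lifting theorem (it is the field-theoretic translation of choosing a minimal closed subgroup of $\Gal(k(\overline\eta)/K^\flat)$ mapping onto $\Gal(k(s))$); the mathematical content sits in the step you yourself label ``the remaining --- and main --- step'' and then dispose of with ``Granting it''. Producing a proper extension $\Phi_1\supsetneq\Phi_0$ with $\Phi_1\cap K^{\mathrm t}=K^\flat$ out of the degree-$p$ Galois extension $P/\Phi_0K^{\mathrm t}$ is an embedding problem with kernel $\ZZ/p$, and its solvability is governed by the vanishing of an $H^2$ with $p$-torsion coefficients over (open subgroups of) $\Gal(k(s))$ --- that is, by $\cd_p\Gal(k(s))\leq 1$, the very input you set out to avoid. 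Since no argument for this descent is given, the proof is incomplete at precisely its crucial point.

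There is also a secondary error: you assert that $P/\Phi_0K^{\mathrm t}$ is an Artin--Schreier extension $y^p-y=a$. That only makes sense if $\Phi_0K^{\mathrm t}$ has characteristic $p$; the proposition allows $A$ of mixed characteristic, in which case $k(\eta)$, hence $\Phi_0K^{\mathrm t}$, has characteristic $0$ (and contains $\mmu_p$, because $k(\eta)(\mmu_p)/k(\eta)$ has degree dividing $p-1$ and is therefore tame), so $P$ is a Kummer extension $y^p=a$. Artin--Schreier theory does enter the correct argument, but on the residue-field side: $H^2(L,\ZZ/p)=0$ for every field $L$ of characteristic $p$, which is how one shows $\cd_p\Gal(k(s))\leq 1$. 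So either quote the cohomological-dimension fact, as the paper does, or, if you want a self-contained treatment, prove directly that every extension of $\Gal(k(s))$ by a pro-$p$ group splits by reducing to finite elementary abelian $p$-kernels and using the Artin--Schreier vanishing of $H^2$ for $k(s)$ and its finite separable extensions; your Zorn scaffolding can then be kept, but it cannot replace that cohomological input.
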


The proposition extends Lemma 7.6 in~\cite{gms}, where
$A$ is assumed to be complete. 

\begin{proof}
  Because we need the ideas from the proof, we will repeat Serre's
  argument.  Set $K=k(\eta)$ and $\overline K=k(\overline\eta)$.  Set
  $K_{\rmt}={\overline K}^{I_{\rmt}}$: the maximal tamely ramified 
  extension of $K$.
  Let $\pi$ be a uniformizing parameter in $A$.  Then, for each
  non-negative integer $n$ not divisible by $p$, choose an $n$-th root
  $\pi_n$ of $\pi$ in $\overline K$ such that $\pi_{nm}^m=\pi_n$.  Set
  $\Kram\eqdef K[\pi_n]_{(p \, \nmid \, n)}$.  Then $\Kram$ is totally and
  tamely ramified over $K$.  Moreover any $K_{\rmt}=\Kram\Kunr$.  It
  follows that $\Gal(k(s))$ map be identified with the subgroup of
  elements $g\in \Gal(K)_{\rmt}$ fixing each of the $\pi_n$; cf.~\cite{WeilII}.
This splits the sequence~\eqref{e.TameExact}.

Now, in~\cite{gms}, Serre extends this splitting non-canonically to a
splitting of~\eqref{e.InertiaExact} as follows. Since $k(s)$ has
characteristic $p$, the p-cohomological dimension of $\Gal(k(s))$ is
$\leq 1$; see~\cite{cg}.  Consequently, any
homomorphism $\Gal(k(s))\arr \Gal(K)_{\rmt}$ can be lifted to $\Gal(K)$.
\end{proof}

While the splitting of~\eqref{e.TameExact} is not canonical, we need to
know that it is possible to split two such sequences, associated with
henselian discrete valuation rings $A \subseteq B$, in a compatible way.

\begin{proposition}\label{p.SplitCompat}
Let $A\subseteq B$ be an extension of henselian discrete valuation rings, such that a uniformizing parameter for $A$ is also a uniformizing parameter for $B$.  Then there exist maps $\sigma_B\colon \Gal(k(s_B))\arr \Gal(k(\eta_B))_{\rmt}$ 
(resp.  $\sigma_A\colon \Gal(k(s_A))\arr \Gal(k(\eta_A))_{\rmt}$) 
splitting the tame inertia exact sequence~\eqref{e.TameExact} 
for $B$ (resp. $A$) and such that the diagram
\[
\xymatrix{
\Gal(k(s_B))\ar^{\sigma_B}[r]\ar[d] & \Gal(k(\eta_B))_{\rmt}\ar[d]\\
\Gal(k(s_A))\ar^{\sigma_A}[r]       & \Gal(k(\eta_A))_{\rmt},\\
}
\]
with vertical morphisms given by restriction, commutes.
\end{proposition}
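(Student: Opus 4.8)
The strategy is to reduce the compatibility statement for the tame inertia sequences to a compatible splitting of the specialization-style sequences, using exactly Serre's construction from the proof of Proposition~\ref{p.Split} but carried out simultaneously for $A$ and $B$. The key observation is that the splitting of~\eqref{e.TameExact} produced in the proof of Proposition~\ref{p.Split} is \emph{not arbitrary}: it depends only on a choice of compatible system of roots $(\pi_n)_{p \, \nmid \, n}$ of a uniformizing parameter $\pi$, the splitting $\sigma$ being characterized as the inclusion of those $g \in \Gal(k(\eta))_{\rmt}$ fixing every $\pi_n$. So the plan is to fix one uniformizing parameter $\pi \in A$, which by hypothesis is also a uniformizing parameter for $B$, and then choose the roots $\pi_n$ inside a separable closure of $k(\eta_A)$ first, and transport them to a separable closure of $k(\eta_B)$ via a compatible embedding.

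First I would set up the arithmetic. Since $A \subseteq B$ and $\pi$ is a uniformizer of both, we may choose a separable closure $\overline{k(\eta_A)}$ and a separable closure $\overline{k(\eta_B)}$ together with a $k(\eta_A)$-embedding $\overline{k(\eta_A)} \hookrightarrow \overline{k(\eta_B)}$ compatible with $k(\eta_A) \subseteq k(\eta_B)$; this induces the restriction maps on Galois and tame Galois groups, and it induces $\Gal(k(s_B)) \arr \Gal(k(s_A))$ on residue fields as well (after choosing compatible separable closures of the residue fields, which the choice of $\overline{k(\eta)}$ determines as in~\cite{gms}). Now choose, once and for all, a compatible system $(\pi_n)_{p \, \nmid \, n}$ with $\pi_n \in \overline{k(\eta_A)}$, $\pi_n^n = \pi$, $\pi_{nm}^m = \pi_n$; its image under the fixed embedding is a compatible system of $n$-th roots of $\pi$ inside $\overline{k(\eta_B)}$. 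Define $\sigma_A$ (resp.\ $\sigma_B$) to be Serre's splitting of~\eqref{e.TameExact} for $A$ (resp.\ $B$) attached to this system of roots, i.e.\ identifying $\Gal(k(s_A))$ (resp.\ $\Gal(k(s_B))$) with the stabilizer of all the $\pi_n$ inside the respective tame Galois group.

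Next I would check that the square commutes. This is essentially formal once the roots are chosen compatibly: if $g \in \Gal(k(\eta_B))_{\rmt}$ lies in the image of $\sigma_B$, i.e.\ $g$ fixes every $\pi_n \in \overline{k(\eta_B)}$, then its restriction to $\overline{k(\eta_A)}$ — which makes sense precisely because the embedding is chosen compatibly — fixes every $\pi_n \in \overline{k(\eta_A)}$, hence lies in the image of $\sigma_A$; and the induced element of $\Gal(k(s_A))$ is the restriction of the element of $\Gal(k(s_B))$ because the identification $K_{\rmt} = K_{\ram}K_{\unr}$ in the proof of Proposition~\ref{p.Split} is compatible with the inclusion $k(\eta_A)_{\rmt} \subseteq k(\eta_B)_{\rmt}$ of tame closures (the $K_{\ram}$ part being generated over the base by the common system $(\pi_n)$, and the $K_{\unr}$ part mapping to $\Gal$ of residue fields). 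Chasing the definitions, $\sigma_A \circ (\text{restriction on } \Gal(k(s)))$ and $(\text{restriction on tame Galois groups}) \circ \sigma_B$ agree on $\Gal(k(s_B))$.

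The only real subtlety — and the step I expect to be the main obstacle to write cleanly — is the bookkeeping of separable closures: one must arrange a single coherent choice of separable closures of $k(\eta_A)$, $k(\eta_B)$, $k(s_A)$, $k(s_B)$ so that \emph{all four} vertical maps in sight (the two restriction maps named in the statement, plus the two residue-field restriction maps implicitly used to make sense of ``$\Gal(k(s_B)) \to \Gal(k(s_A))$'') are simultaneously induced by honest field embeddings, and so that the tame quotients behave functorially. Because $A \subseteq B$ share a uniformizer, the valuation on $B$ restricts to that on $A$ and $k(s_A) \hookrightarrow k(s_B)$, so this is possible; but it requires care to state and is where a referee would look hardest. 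Once that is pinned down, everything else is Serre's argument applied verbatim to the common system of roots $(\pi_n)$, and the commutativity of the diagram is immediate from the fixed-point characterization of the splittings.
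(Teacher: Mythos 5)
Your proposal is correct and follows essentially the same route as the paper: fix the common uniformizer $\pi$, choose one compatible system of roots $(\pi_n)_{p\,\nmid\, n}$ (the paper picks them directly in $k(\overline\eta_B)$; you pick them in $\overline{k(\eta_A)}$ and transport them along a compatible embedding, which is the same thing), and define both $\sigma_A$ and $\sigma_B$ as the stabilizers of this single system, so that commutativity is immediate from the fixed-point characterization in the proof of Proposition~\ref{p.Split}. The paper's proof is exactly this, stated more tersely, with the bookkeeping of compatible separable closures left implicit just as you flag it.
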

\begin{proof}
  Let $\pi\in A$ be a uniformizing parameter for $A$, and hence for $B$.  For each
  $n$ not divisible by $p=\chr(k(s_A))$, choose an $n$-th root $\pi_n$
  of $\pi$ in $k({\overline\eta}_B)$.  Now, set $\sigma_{B}(k(s_B))=
  \{g\in \Gal(k(\eta_B))_{\rmt}\colon  g(\pi_n)=\pi_n\, \text{for all $n$}\}$ and
  similarly for $A$.  By the proof of Proposition~\ref{p.Split}, this
  defines splitting of the tame inertia sequences.  Moreover, these splittings
lift to splittings of the inertia exact sequence.   
\end{proof}

\begin{remark}\label{p.LiftSplit}
  By the proof of Proposition~\ref{p.Split}, the splittings $\sigma_B$ and $\sigma_A$ in Proposition~\ref{p.SplitCompat} can be lifted to maps $\tilde\sigma_B\colon \Gal(k(s_B))\arr \Gal(k(\eta_B))$  (resp. $\tilde\sigma_A\colon \Gal(k(s_A))\arr \Gal(k(\eta_a))$).  However, since these liftings are non-canonical it is not clear that $\tilde\sigma_B$ and $\tilde\sigma_A$ can be chosen compatibly. 
\end{remark}

\subsection{Tame gerbes and splittings}

The following result is certainly well known; for the sake 
of completeness we supply a short proof.

\begin{proposition}
  Let $\cX\arr S$ be an étale gerbe over a henselian trait, with closed point
  $s$.  Denote by $i\colon s \arr S$ the inclusion of the closed point
  and by $\spc\colon S\arr s$ the specialization map.  Then 
the restriction map 
\[ i^*\colon \cX(S)\arr \cX(s) \]
induces an equivalence of
categories with quasi-inverse given by 
\[ \spc^*\colon \cX(s)\arr \cX(S) \, . \] 
\end{proposition}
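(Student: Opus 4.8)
The plan is to reduce everything to the combinatorial description of étale gerbes over a henselian trait provided by Corollary~\ref{c.ReduceToBG} together with the splitting results of \S\ref{sub.splitting}. First I would invoke Corollary~\ref{c.ReduceToBG} to write $\cX=\tau^*\cX''$ for a gerbe $\cX''$ over $s_{\et}$, where $\tau=\spc\circ u$. Since $\spc\colon S_{\et}\arr s_{\et}$ and its quasi-inverse $i\colon s_{\et}\arr S_{\et}$ are mutually inverse equivalences of topoi (this is the statement recalled just before Corollary~\ref{c.ReduceToBG}, from~\cite[p.~89]{SGA72}), pulling back gerbes and their sections along them produces mutually quasi-inverse equivalences. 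Concretely, $\cX(S)=(\tau^*\cX'')(S)\simeq \cX''(\spc^*S)$ and $\cX(s)=(\spc^*\cX'')(s)\simeq \cX''(s)$, and under these identifications $i^*$ and $\spc^*$ correspond to the identity functor on $\cX''(s)$. I would spell this out at the level of the defining groupoid $[U_1\double U_0]$: since $U_0,U_1$ are finite étale over $S$, restriction to $s$ is an equivalence $\fet/S\arr\fet/s$, so the stackification is unchanged, and any object (resp.\ morphism) of $\cX(s)$ lifts uniquely (up to unique isomorphism) to one over $S$.

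The one subtlety — and the main obstacle — is that an étale gerbe over $S$ need not have an object over $S$ itself (it need only become trivial étale-locally), so I cannot argue naively by picking a global object and transporting structure. This is exactly where the henselian hypothesis does the work: Proposition~\ref{p.Hensel} lets me replace an arbitrary étale atlas $X_0\arr S$ by a \emph{finite} étale atlas, and then the inertia $X_1=X_0\times_\cX X_0$ is finite over $S$ too because $\cX$ is separated. So the whole groupoid presentation lives in $\fet/S$, and the equivalence $i^*\colon\fet/S\arr\fet/s$ applied termwise gives the equivalence of the associated stacks. To make the statement about \emph{sections over $S$ and over $s$} (rather than about the stacks globally), I would note that a section of $\cX$ over $S$ is the same as a section of the corresponding gerbe $\cX''$ over $s$, pulled back; since $\spc^*$ is fully faithful with essential image everything coming from $s$, and since $i^*\spc^*=\id$, the composite $i^*\circ\spc^*$ is canonically isomorphic to the identity on $\cX(s)$ and $\spc^*\circ i^*$ is canonically isomorphic to the identity on $\cX(S)$.

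The key steps, in order, are: (1) apply Proposition~\ref{p.Hensel} to get a finite étale atlas, hence a groupoid $[U_1\double U_0]$ with all terms in $\fet/S$; (2) apply the equivalence $i^*\colon\fet/S\arr\fet/s$ termwise and pass to stackifications, obtaining $\cX\simeq\spc^*(i^*\cX)$ compatibly with restriction, i.e.\ $\cX=\tau^*\cX''$ with $\cX''=i^*\cX$ (this is Corollary~\ref{c.ReduceToBG}); (3) use that $\spc$ and $i$ are inverse equivalences of sites to conclude that $\spc^*$ and $i^*$ induce inverse equivalences on categories of sections, checking on objects and morphisms using Proposition~\ref{p.GerbeCategory}-style bookkeeping if one wants the Galois-theoretic picture; (4) verify the two composites are canonically isomorphic to the respective identity functors. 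I expect step~(2)/(3) — carefully justifying that "étale-local triviality over $S$" descends to the residue field without loss of information, i.e.\ that no nontrivial gerbe data is "hidden" in the generic direction — to be the only place requiring care, and it is precisely the finiteness coming from Proposition~\ref{p.Hensel} and the separatedness of $\cX$ that makes it go through. Everything else is formal manipulation of equivalences of topoi and stackification.
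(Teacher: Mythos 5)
There is a genuine gap in the step you call ``formal manipulation of equivalences of topoi.'' The morphisms $i\colon s_{\et}\arr S_{\et}$ and $\spc\colon S_{\et}\arr s_{\et}$ are \emph{not} mutually inverse equivalences: one has $\spc\circ i\simeq\id_{s_{\et}}$, but $i\circ\spc$ is not isomorphic to the identity of $S_{\et}$ (the small \'etale topos of a henselian trait also sees the generic point; a sheaf on $S_{\et}$ amounts to sheaves on $s_{\et}$ and $\eta_{\et}$ together with a gluing map). Consequently, from $\cX=\tau^*\cX''$ and $i^*\spc^*\simeq\id$ you only get essential surjectivity of $i^*\colon\cX(S)\arr\cX(s)$; you do \emph{not} get for free that $i^*$ is fully faithful on sections over $S$, and that is precisely the content of the proposition. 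Your steps (3)--(4), as written, rest on the false ``inverse equivalences'' claim, so the decisive point is asserted rather than proved. The paper's proof supplies exactly this missing piece, and does so in one line: given $\xi_1,\xi_2\in\cX(S)$, the sheaf $\Hom(\xi_1,\xi_2)$ is \'etale (and, by separatedness, finite) over $S$, and since $S$ is henselian its sections over $S$ restrict bijectively to its sections over $s$; hence $i^*$ is fully faithful, and essential surjectivity comes from the pullback description $\cX=\tau^*\cX''$ of Corollary~\ref{c.ReduceToBG}, just as in your step (2).

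Your alternative route through the groupoid presentation can be salvaged, but it needs more than you wrote: after producing a finite \'etale groupoid $[U_1\double U_0]$ via Proposition~\ref{p.Hensel} and separatedness, you must still show that every object of $\cX(S)$, and every morphism between two such objects, is trivialized (resp.\ computed) after a cover lying in $\fet/S$ --- this again uses Proposition~\ref{p.Hensel} to refine the \'etale cover $U_0\times_{\cX,\xi}S\arr S$ attached to a section $\xi$ --- and then carry out the $2$-descent bookkeeping under the genuine equivalence $\fet/S\simeq\fet/s$, checking that it identifies the relevant fiber products and Hom-sets. That argument works, but it is substantially longer than the paper's, and the phrase ``the stackification is unchanged'' hides exactly the refinement and full-faithfulness issues above. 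If you repair your write-up, either insert the Hom-sheaf argument for full faithfulness (the paper's route) or make the finite-\'etale-descent comparison explicit; do not appeal to $\spc$ and $i$ being inverse equivalences of topoi.
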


\begin{proof}
Since the composite $s\arr S\stackrel{\spc}{\arr} s$ is an auto-equivalence and $\cX$ is obtained by pullback from $\cX_s$, it suffices to show that the 
functor $i^*\colon \cX(S)\arr \cX(s)$ is faithful.  For this, suppose $\xi_i\colon S\arr \cX, i=1,2$ are two objects of $\cX(S)$. Then the sheaf $\Hom(\xi_1,\xi_2)$ is étale over $S$.  Since $S$ is henselian, it follows 
that the sections of $\Hom(\xi_1,\xi_2)$ over $S$ are isomorphic 
(via restriction) to the sections over $s$.  
Thus $i^*\colon \cX(S)\arr \cX(s)$ is fully faithful.
\end{proof}

A Deligne-Mumford stack $\cX\arr S$ is \emph{tame} if, for every
geometric point $\xi\colon \Spec\Omega\arr \cX$, the order of the automorphism
group $\Aut_{\Spec \Omega}(\xi)$ is prime to the characteristic
of $\Omega$.    For tame gerbes over a henselian discrete valuation
ring, we have the following analogue of the splitting in
Proposition~\ref{p.SplitCompat}.

\begin{theorem}\label{t.Commute}
Let $h\colon \spec B \arr \spec A$ be the morphism of henselian traits
induced by an inclusion $A \hookrightarrow B$ of henselian discrete 
valuation rings (here we assume that a uniformizing parameter for $A$ 
is sent to a uniformizing parameter for $B$).
Let $\cX$ be a tame étale gerbe over $\spec A$.  Write
  $j_B\colon \{\eta_B\}\arr \spec B$ (resp. $j_A\colon \{\eta_A\}\arr \spec A$) 
for the inclusion of the generic points.  Then there exist functors
\[ \text{$\tau_A \colon \cX(k(\eta_A))\arr \cX(A)$ and 
$\tau_B \colon \cX(k(\eta_B))\arr \cX(B)$} \]
such that the diagram
 \[ \xymatrix{
 \cX (A)\ar^-{j_A^*}[r]\ar^{h^*}[d] &
 \cX (k(\eta_A))\ar^{\tau_A}[r]\ar^{h^*}[d] & 
    \cX(A) \ar^{h^*}[d]\\
 \cX (B)\ar^-{j_B^*}[r] & 
    \cX(k(\eta_B))\ar^{\tau_B}[r] & 
     \cX(B)\,, }
 \]
commutes (up to natural isomorphism) and the horizontal composites
are isomorphic to the identity.
\end{theorem}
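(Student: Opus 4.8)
�The plan is to reduce the statement about the tame gerbe $\cX$ to the purely group-theoretic splitting result of Proposition~\ref{p.SplitCompat}, using the dictionary between gerbes with finite inertia over a field and embedding problems (Proposition~\ref{p.GerbeCategory}) together with Corollary~\ref{c.ReduceToBG}, which lets me replace $\cX$ over $\spec A$ by a gerbe over $(\spec k(s_A))_{\et}$. First I would use Corollary~\ref{c.ReduceToBG} to write $\cX = \tau_A^* \cX''_A$ for a gerbe $\cX''_A$ over $s_A$, and similarly $h^*\cX = \tau_B^*\cX''_B$ over $s_B$; the specialization equivalence (the Proposition just proved above) identifies $\cX(A)$ with $\cX''_A(k(s_A))$ and $\cX(B)$ with $\cX''_B(k(s_B))$, compatibly with $h^*$. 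So the functors $\tau_A,\tau_B$ I must construct are really functors $\cX(k(\eta_A)) \to \cX''_A(k(s_A))$, i.e. ways of turning a section over the generic point into a section over the closed point, and the required commuting square becomes a statement about residue-field sections.

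Next I would translate everything into embedding problems. Since $\cX$ is a gerbe over $\spec A$ with finite inertia, after restricting to $k(\eta_A)$ it corresponds to an extension $1 \to F \to E_A \to \Gal(k(\eta_A)) \to 1$ with $F$ finite; because $\cX$ is \emph{tame}, the order of $F$ is prime to $p = \chr k(s_A)$, so the wild inertia $I^{\rmw}$ acts trivially after passing to $F$ and the relevant extension actually factors through the tame Galois group $\Gal(k(\eta_A))_{\rmt}$. An object of $\cX(k(\eta_A))$ is then (by Proposition~\ref{p.GerbeCategory}) a section $s\colon \Gal(k(\eta_A)) \to E_A$ over $\Gal(k(\eta_A))$; composing with a splitting $\sigma_A\colon \Gal(k(s_A)) \to \Gal(k(\eta_A))_{\rmt}$ of the tame inertia sequence~\eqref{e.TameExact} produces a section $\Gal(k(s_A)) \to E_A$, i.e. (again by Proposition~\ref{p.GerbeCategory}, or rather its residue-field analogue) an object of $\cX''_A(k(s_A))$, hence via $\spc^*$ an object of $\cX(A)$. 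This defines $\tau_A$ on objects; on morphisms it is the identity on $F$, so functoriality is immediate. The composite $\tau_A \circ j_A^*$ takes a section over $\spec A$, restricts it to the generic point, then restricts along $\sigma_A$ back to $\Gal(k(s_A))$ — which recovers the original section over the closed point — so the horizontal composite is isomorphic to the identity, as required.

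The commuting square is then exactly where Proposition~\ref{p.SplitCompat} (and Remark~\ref{p.LiftSplit}) enters: I choose the splittings $\sigma_A$ and $\sigma_B$ \emph{compatibly}, i.e. so that restriction $\Gal(k(\eta_B))_{\rmt} \to \Gal(k(\eta_A))_{\rmt}$ carries $\sigma_B$ to $\sigma_A$; this is legitimate precisely because a uniformizing parameter for $A$ is sent to one for $B$, which is the hypothesis of the theorem. Given compatible splittings, for a section $s$ over $\Gal(k(\eta_A))$ the two ways around the square — first restrict $s$ along $\sigma_A$ and then base change to $B$, versus first base change $s$ to a section over $\Gal(k(\eta_B))$ and then restrict along $\sigma_B$ — agree, because base change along $h$ corresponds on Galois groups to the restriction maps $\Gal(k(\eta_B)) \to \Gal(k(\eta_A))$ and $\Gal(k(s_B)) \to \Gal(k(s_A))$, and these commute with the $\sigma$'s by construction. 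Translating back through the specialization equivalences gives the commutativity of the displayed square up to natural isomorphism.

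The main obstacle, and the step that needs genuine care rather than formal bookkeeping, is the tameness reduction: I must check that the extension $E_A$ cut out by a tame gerbe really does descend to the tame quotient $\Gal(k(\eta_A))_{\rmt}$, so that precomposing with a splitting of the \emph{tame} inertia sequence (which is all Proposition~\ref{p.SplitCompat} provides canonically-enough) suffices, and that I never need to invoke the non-canonical lifts $\tilde\sigma_A,\tilde\sigma_B$ of Remark~\ref{p.LiftSplit}, which cannot be chosen compatibly. Concretely this means verifying that for a tame gerbe the automorphism group $F = \Aut_{\cX}(\xi)$ has order prime to $p$, so any continuous homomorphism from $\Gal(k(\eta_A))$ into the relevant extension kills the pro-$p$ wild inertia $I^{\rmw}$, and hence the whole embedding-problem datum lives over $\Gal(k(\eta_A))_{\rmt}$. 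Once that is in hand, the rest is a diagram chase through Propositions~\ref{p.GerbeCategory}, \ref{p.SplitCompat} and Corollary~\ref{c.ReduceToBG}.
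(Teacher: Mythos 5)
Your proposal is correct and follows essentially the same route as the paper's proof: reduce to a gerbe pulled back from the closed point (Corollary~\ref{c.ReduceToBG}), use the embedding-problem description of sections (Proposition~\ref{p.GerbeCategory}), note that tameness forces any section to kill the pro-$p$ wild inertia so it factors through the tame Galois group, and define $\tau_A,\tau_B$ by precomposing with the compatible splittings of Proposition~\ref{p.SplitCompat}, which gives both the commutativity of the square and (since the splittings are sections) the identity on the horizontal composites. The only differences are cosmetic bookkeeping (you phrase the base change via a gerbe over $s_B$ rather than keeping the single extension with quotient $\Gal(k(s_A))$) and that you spell out the verification the paper leaves to the reader.
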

\begin{proof}
  Since $\cX$ is an étale gerbe, there is an extension $E$ as
  in~\eqref{e.extension} with $G=\Gal(k(s_A))$ such that $\cX$ is the
  pull-back of $\cX_E$ to the big étale site over $S_A$.  Since
  $\cX$ is tame, the band, i.e., the group $F$ in~\eqref{e.extension},
  has order prime to $\chr k(s_A)$.

  Now, pick splittings $\sigma_B$ and $\sigma_A$ compatibly, as in
  Proposition~\ref{p.SplitCompat}.  

  We define a functor $\tau_B \colon \cX(k(\eta_B))\arr \cX(B)$ as follows.
  Using Proposition~\ref{p.GerbeCategory} we can identify
  $\cX(k(\eta_B))$ with category of sections $s\colon \Gal(k(\eta_B))\arr
  E$.  Given such a section $s$, the tameness of $E$ implies that
  $s(I^{\rmw})=1$.  Therefore, $s$ induces a map $\Gal(k(\eta_B))_{\rmt}\arr
  \Gal(k(s_B))$, which we will also denote by the symbol $s$.  Let
  $\tau_B(s)$ denote the section
  $s\circ\sigma_B\colon \Gal(k(s_B))\arr E$.  This defines
  $\tau_B$ on the objects in $\cX(k(\eta_B))$.  If we define
  $\tau_A$ in the same way, it is clear that the diagram above
  commutes on objects.  We define $\tau_B$ (resp. $\sigma_A$) on
  morphisms, by setting $\tau_B(f)=f$ (and similarly for $A$).  We
  leave the rest of the verification to the reader.
\end{proof}

\subsection{Genericity}

\begin{theorem}\label{t.GenDVR}
Let $R$ be a discrete valuation ring, $S = \Spec(R)$ and
\[ \cX\arr S \]
a tame étale gerbe. Then
$\ed_{k(s)}\cX_s \leq \ed_{k(\eta)}\cX_{\eta}$,
where $s$ is the closed point of $S$ and $\eta$ is 
the generic point.
\end{theorem}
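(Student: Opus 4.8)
The plan is to reduce the statement to the case of a \emph{complete} discrete valuation ring, and then to use the specialization machinery developed in the previous subsections to transport objects (and fields of definition) between the generic and closed fibers. First I would observe that one may harmlessly replace $R$ by its henselization (or completion) $\widehat R$: by Proposition~\ref{p.extensions} and the fact that $k(\eta) \hookrightarrow k(\widehat\eta)$ and $k(s) = k(\widehat s)$, together with the observation that an object over $k(s)$ together with a field of definition can be detected after the faithfully flat base change, it suffices to bound $\ed_{k(s)} \cX_s$ using descent data over $\widehat R$. More precisely, the key point is that since $k(s)$ is unchanged, $\ed_{k(s)}\cX_s$ is intrinsic, while the genericity inequality we want to prove only gets easier to establish over a henselian base because the structure theory (Corollary~\ref{c.ReduceToBG}, Theorem~\ref{t.Commute}) becomes available.

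Next, assuming $R$ henselian, I would take an arbitrary object $\xi \in \cX_s(L)$ with $L/k(s)$ a field extension, and show it descends to a subfield of transcendence degree at most $\ed_{k(\eta)}\cX_\eta$. The idea is: use the specialization equivalence $\spc^*\colon \cX(s) \xrightarrow{\sim} \cX(S)$ (the proposition on tame gerbes and splittings, stated just before Theorem~\ref{t.Commute}) to lift $\xi$ to an object $\widetilde\xi$ of $\cX$ over the henselian trait $S_{L_{\geq 0}}$ obtained by base-changing $S$ along $k(s) \to L$ — more carefully, one wants the trait $\Spec B$ where $B$ is a henselian DVR with residue field $L$ dominating $R$, with a common uniformizer. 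Restricting $\widetilde\xi$ to the generic point $\eta_B$ gives an object of $\cX_\eta(k(\eta_B))$. By definition of $\ed_{k(\eta)}\cX_\eta$, this generic-fiber object descends to an intermediate field $k(\eta) \subseteq M \subseteq k(\eta_B)$ with $\trdeg_{k(\eta)} M \leq \ed_{k(\eta)}\cX_\eta$.

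The crucial step is then to push this field of definition back down to the closed fiber. Here is where Theorem~\ref{t.Commute} (and the compatible splittings of Proposition~\ref{p.SplitCompat}) does the work: the descent datum over $M$ should itself be realized over a sub-DVR $A \subseteq B$ whose fraction field is $M$ (after possibly enlarging $M$ by a bounded amount, or taking $A$ to be the valuation ring induced on $M$), and the functor $\tau_A\colon \cX(k(\eta_A)) \to \cX(A)$ together with the commuting square relating $A$ and $B$ shows that the object over $k(\eta_B)$, which is $h^*$ of the object over $k(\eta_A)=M$, has a model over $A$; restricting that model to the closed point $s_A$ gives an object of $\cX_s$ over the residue field $k(s_A)$, and one checks via the commuting square (applied with $\cX$ on the nose, using that the horizontal composites are the identity) that this residue-field object recovers $\xi$ up to isomorphism. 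Since $\trdeg_{k(s)} k(s_A) \leq \trdeg_{k(\eta)} M \leq \ed_{k(\eta)}\cX_\eta$ — the transcendence degree being preserved when passing between fraction field and residue field of a DVR finitely generated in the relevant sense — we conclude $\ed_{k(s)} \xi \leq \ed_{k(\eta)}\cX_\eta$, and taking the supremum over all $\xi$ finishes the proof.

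The main obstacle will be the bookkeeping in the crucial step: arranging that the intermediate field $M$ produced by the definition of $\ed_{k(\eta)}\cX_\eta$ is actually the fraction field of a henselian DVR $A$ sitting inside $B$ and sharing a uniformizer with $R$ (or with a henselian DVR lying over $R$), so that Theorem~\ref{t.Commute} applies; one typically handles this by replacing $M$ by the residue field of a place of $k(\eta_B)$ extending the valuation, at the cost of controlling transcendence degrees carefully, and by invoking the valuative criterion of properness or the local structure of smooth morphisms to extend objects across the generic point. One must also verify the compatibility "up to natural isomorphism" in Theorem~\ref{t.Commute} is enough to conclude equality of isomorphism classes in $F_\cX$, which it is, since $F_\cX$ only records isomorphism classes. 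The tameness hypothesis is exactly what makes the splitting functors $\tau_A, \tau_B$ available, so no further hypotheses are needed.
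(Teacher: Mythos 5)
Your proposal is correct and follows essentially the same route as the paper's proof: reduce to a complete base, use the specialization equivalence to spread $\xi$ out over a complete trait $\Spec B$ with residue field $L$, descend the restriction to the generic fibre to a subfield $M \supseteq k(\eta)$ of small transcendence degree, pass to the complete sub-DVR $A \subseteq B$ induced on $M$ (which shares the uniformizer of $R$ since that uniformizer lies in $k(\eta) \subseteq M$), and use the compatible functors $\tau_A, \tau_B$ of Theorem~\ref{t.Commute} to obtain a model over $A$ whose closed fibre exhibits a field of definition of $\xi$ of transcendence degree at most $\trdeg_{k(\eta)} M$. The points you leave implicit are handled in the paper by Cohen's structure theorem (writing $R = k(s)\ds{t}$ in the equicharacteristic case, or replacing $R$ by a Cohen ring in mixed characteristic, so that $B$ can be taken to be $L\ds{t}$ or $W(L)$), and the paper runs the last step as a contradiction against a minimal field $L$ rather than directly; your aside about the valuative criterion of properness is not needed, as Theorem~\ref{t.Commute} alone does the work.
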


\begin{proof}
  We may assume without loss of generality that $R$ is complete.  
  Indeed, otherwise replace
  $R$ with its completion at $s$.  The field $k(s)$ does not change,
  but $k(\eta)$ is replaced by a field extension. By
  Proposition~\ref{p.extensions}, the essential dimension of
  $\cX_{k(\eta)}$ does not increase.

  If $R$ is equicharacteristic, then by Cohen's structure theorem, $R=k\ds{t}$ with
  $k=k(s)$. If not, denote by $W\bigl(k(s)\bigr)$ the unique complete discrete valuation ring with residue field $k(s)$ and uniformizing parameter $p$. This is called a Cohen ring of $k(s)$ in \cite[19.8]{ega4-1}. 
If $k(s)$ is perfect then $W\bigl(k(s)\bigr)$ is the ring 
of Witt vectors of $k(s)$, but this is not true in general, and 
$W\bigl(k(s)\bigr)$ is only determined up to a non-canonical isomorphism. 
By \cite[Théorème~19.8.6]{ega4-1}, there 
is a homomorphism $W\bigl(k(s)\bigr) \arr R$ inducing 
the identity on $k(s)$. Since $\cX$ is pulled back 
from $k(s)$ via the specialization map, we can 
replace $R$ by $W\bigl(k(s)\bigr)$.

 Now suppose $b\colon \Spec L\arr \cX_s$ is a morphism from the spectrum of
 a field with $\ed_{k(s)}b=\trdeg_{k(s)}L=\ed\cX_s$.  (Such a
 morphism exists because $\ed\cX_s$ is finite.)   Set $B := L\ds{t}$ 
 if $R$ is equicharacteristic and $B := W(L)$
 otherwise.   In either case, $B$ is a complete discrete valuation
 ring with residue field $L$. In the first case we have a canonical 
embedding $R = k\ds{t} \subseteq L\ds{t} = B$; in the second case, 
again by \cite[Théorème~19.8.6]{ega4-1} (due to Cohen), 
we have a lifting $R = W\bigl(k(s)\bigr) \arr W(L) = B$ 
of the embedding $k(s) \subseteq L$, which is easily seen 
to be injective. Therefore there is a unique morphism
  $\beta = b \circ \spc \colon S_B\arr \cX$ whose specialization 
to the closed point of $B$ coincides with $\xi$. 

Suppose there is a subfield $M$ of $k(\eta_B)$ containing $k(\eta_R)$
such that the following conditions hold:
\begin{enumerate}
\item the restriction $j_B^*\beta$ of $\beta$ to $k(\eta_B)$ 
factors through $M$,
\item $\trdeg_{k(\eta_R)} M<\ed_{k(s)} b$.
\end{enumerate}
Complete $M$ with respect to the discrete valuation induced from
$k(\eta_B)$ and call the resulting complete discrete valuation ring
$A$.  It follows that there is a class $\alpha$ in $\cX(k(\eta_A))$ whose
restriction to $k(\eta_B)$ coincides with  $j_B^*\beta$.   But then, by 
Theorem~\ref{t.Commute},  we have $\beta=h_*\sigma_A(\alpha)$.  
This implies that $b\colon \Spec L\arr \cX_s$ factors through the special
fiber of $A$.  Since the transcendence degree of $k(s_A)$ over $k(s)$
is less than $\ed_{k(s)} b$, this is a contradiction.
\end{proof}

\begin{corollary}
Let $R$ be an equicharacteristic complete discrete
local ring and $\cX\arr \Spec(R)$ be a tame étale gerbe.
Then \[ \ed_{k(s)}\cX_s=\ed_{k(\eta)}  \cX_{\eta} \, , \]
where $s$ denotes the closed point of $\Spec(R)$ and $\eta$ 
denotes the generic point.
\end{corollary}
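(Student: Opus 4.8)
The corollary follows by combining Theorem~\ref{t.GenDVR} with its reverse inequality, which is an instance of the more general Genericity/specialization philosophy: for a tame \'etale gerbe over a complete DVR $R$ that is \emph{equicharacteristic}, the generic and special fibres have the same essential dimension. First I would establish the inequality $\ed_{k(\eta)}\cX_\eta \le \ed_{k(s)}\cX_s$, which goes in the opposite direction to Theorem~\ref{t.GenDVR}. The key point is that, since $R$ is equicharacteristic complete, Cohen's structure theorem gives $R = k\ds{t}$ with $k = k(s)$, and $\cX$ is pulled back from $\cX_s$ via the specialization map $\spc\colon S\arr s$ (an \'etale gerbe over a henselian trait is determined by its special fibre, by Corollary~\ref{c.ReduceToBG}). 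So any object $\xi\in\cX(k(\eta))$ lies, after the equivalence $\cX(S)\simeq\cX(s)$, in the essential image of $\spc^*$ composed with restriction; more concretely, an object of $\cX_\eta$ over a field $L/k(\eta)$ is the pullback of an object of $\cX_s$ over a suitable subfield.

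\textbf{Reverse inequality in detail.} Let $L/k(\eta)$ be a field and $\xi\in\cX_\eta(L)$ with $\ed\xi = \ed\cX_\eta$ (finite, by Corollary~\ref{cor.finiteness} applied to the tame gerbe, whose inertia is finite hence affine). Write $L$ as the fraction field of a suitable valuation ring extending $R$; more simply, observe that $k(\eta) = k\dr{t}$ and choose a valuation on $L$ extending the $t$-adic valuation on $k\dr{t}$, with residue field $\ell$. Passing to the completion $\widehat{L}$, we get a complete DVR with residue field $\ell$; since $\cX$ is pulled back from $s$, the object $\xi$ over $\widehat{L}$ is the specialization of its image, giving an object $\bar\xi$ of $\cX_s(\ell)$ that pulls back to $\xi$. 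Then $\ed_{k(s)}\bar\xi \le \trdeg_{k(s)}\ell \le \trdeg_{k(\eta)}L = \ed\xi$, and since $\xi$ is the pullback of $\bar\xi$ we also have $\ed\xi \ge \ed\bar\xi$ is not quite what we want---rather we need that $\xi$ descends to a field over which $\bar\xi$ already lives. The cleanest route is: $\bar\xi \in \cX_s(\ell)$ descends to some $k(s)\subseteq M_0\subseteq \ell$ with $\trdeg_{k(s)}M_0 = \ed\bar\xi \le \ed\cX_s$; pulling the descended object back along $\spc$ and then to a finitely generated subfield of $L$ containing the image of $M_0\dr{t}$ (or $M_0$) shows $\xi$ descends to a field of transcendence degree at most $\ed\cX_s$ over $k(\eta)$, so $\ed\cX_\eta \le \ed\cX_s$.

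\textbf{Main obstacle.} The delicate point is the interplay between the valuation-theoretic ``spreading out'' of the field $L/k(\eta)$ and the pullback structure $\cX = \spc^*\cX_s$: one must arrange a complete DVR $A$ with $R\subseteq A$, fraction field a subfield of $L$ over which $\xi$ is defined, and residue field of transcendence degree controlled by $\ed\cX_s$---then invoke Theorem~\ref{t.Commute} (the commuting-squares result for tame gerbes) to transport the descended object on the special fibre back to the generic fibre compatibly. This is exactly the mechanism already used inside the proof of Theorem~\ref{t.GenDVR}, run in the reverse direction, and the equicharacteristic hypothesis is what lets us avoid the Witt-vector complications and use the honest inclusion $R = k\ds{t}\subseteq L\ds{t}$. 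Once both inequalities are in hand, the corollary is immediate: Theorem~\ref{t.GenDVR} gives $\ed_{k(s)}\cX_s \le \ed_{k(\eta)}\cX_\eta$ and the argument above gives the reverse, so the two are equal.
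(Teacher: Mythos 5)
Your overall structure is right --- Theorem~\ref{t.GenDVR} gives $\ed_{k(s)}\cX_s \le \ed_{k(\eta)}\cX_\eta$, and the equicharacteristic hypothesis plus Cohen's theorem ($R = k\ds{t}$, so that $\cX_{k(\eta)}$ is the pullback of $\cX_{k(s)}$ along $k(s)\hookrightarrow k\dr{t} = k(\eta)$) is what yields the reverse inequality --- but your detailed argument for the reverse inequality is both unnecessary and broken where it matters. Once you know $\cX_{k(\eta)} \cong (\cX_{k(s)})_{k(\eta)}$, the inequality $\ed_{k(\eta)}\cX_{k(\eta)} \le \ed_{k(s)}\cX_{k(s)}$ is exactly Proposition~\ref{p.extensions} (essential dimension does not increase under base field extension): an object $\xi\in\cX_\eta(L)$ is an object of $\cX_s$ over $L$ viewed as an extension of $k(s)$; descend it to $M_0$ with $\trdeg_{k(s)}M_0\le \ed\cX_s$, and then, as an object of $\cX_\eta$, it descends to the compositum $M_0\cdot k(\eta)\subseteq L$, whose transcendence degree over $k(\eta)$ is at most $\trdeg_{k(s)}M_0$. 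That is the whole proof, and it is the paper's proof. No valuations, completions, or Theorem~\ref{t.Commute} are needed.

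The specialization detour you substitute for this is where the genuine gap lies. First, an object of a gerbe over a field $L$ equipped with a valuation does not automatically ``specialize'' to an object $\bar\xi$ over the residue field $\ell$ that pulls back to $\xi$; extending objects across the valuation ring is precisely the hard content of the tame splitting machinery (Theorem~\ref{t.Commute}), and even that theorem only asserts that the composite $\cX(A)\to\cX(k(\eta_A))\to\cX(A)$ is the identity, not the composite in the order you need. Second, and more seriously, passing to the completion $\widehat L$ ruins the bound: a field of definition of $\xi_{\widehat L}$ inside $\widehat L$ controls $\ed(\xi_{\widehat L})$, which is $\le \ed\xi$ --- the wrong direction --- so descending the completed object does not bound $\ed\xi$ unless you produce the descent isomorphism over $L$ itself, which your construction does not do. You half-acknowledge this dead end mid-argument (``is not quite what we want''), but the ``cleanest route'' you then propose still runs through the unjustified $\bar\xi$. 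Replace that entire paragraph by the one-line appeal to Proposition~\ref{p.extensions} and the proof is correct and complete.
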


\begin{proof}
Set $k=k(s)$.  Since $R$ is equicharacteristic, we have $R=k\ds{t}$ and
$\cX_{k(\eta)}$ is the pullback to $k(\eta)$ of $\cX_{k(s)}$ via the
inclusion of $k$ in $k\dr{t}$.   Therefore $\ed_{k(s)}\cX_{k(s)}\geq
\ed_{k(\eta)}\cX_{k(\eta)}$.
The opposite inequality is given by Theorem~\ref{t.GenDVR}.
\end{proof}

\begin{theorem} \label{thm:genericity-gerbe} Suppose that $\cX$ is an
  étale gerbe over a smooth scheme $\bX$ locally of finite type over a
  perfect field $k$. Let $K$ be an extension of $k$,
  $\xi \in \cX(\spec K)$. Then
   \[
   \ed \xi
   \leq \ed_{k(\bX)}\cX_{k(\bX)} + \dim \bX - \codim_{\cX} \xi.
   \]
\end{theorem}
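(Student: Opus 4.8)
The goal is to bound $\ed\xi$ for a point $\xi \in \cX(\Spec K)$, where $\cX$ is an \'etale gerbe over a smooth $k$-scheme $\bX$. The natural strategy is to move $\xi$ to a generic situation via a specialization/spreading-out argument, so that the ``DVR genericity'' result Theorem~\ref{t.GenDVR} (and its corollary) can be applied repeatedly, one codimension at a time. First I would reduce $\xi$ to a model: since $\cX$ is locally of finite type over the smooth scheme $\bX$, and $K/k$ is arbitrary, the morphism $\xi\colon \Spec K \arr \cX$ factors through $\cX_Z$ for some integral $k$-subscheme $Z$ of $\bX$ with function field $K_0 \subseteq K$; after possibly shrinking $\bX$ we may take $\xi$ to come from the generic point of an integral locally closed subscheme, and $\ed\xi \le \trdeg_k K_0 + (\text{something})$, where the codimension term $\codim_\cX \xi$ enters because the automorphism groups (the band of the gerbe) cut down the transcendence degree needed. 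The key numerical bookkeeping is: $\dim \bX = \dim Z + \codim_{\bX} Z$, and one wants $\ed\xi \le \ed_{k(\bX)}\cX_{k(\bX)} + \dim \bX - \codim_\cX\xi$.

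The heart of the argument is a descent along a chain of codimension-one specializations. Choosing a chain of subvarieties $\bX \supseteq \bX_1 \supseteq \cdots$ realizing the generic point of $Z$ as a limit of codimension-one degenerations (using smoothness of $\bX$ to get regular local rings, whose localizations at height-one primes give DVRs), I would apply Theorem~\ref{t.GenDVR} at each step to conclude $\ed_{k(s)}\cX_s \le \ed_{k(\eta)}\cX_\eta$ — that is, the essential dimension of the gerbe over a more special point is controlled by that over the generic point. Iterating $\codim_{\bX} Z$ times and adding up dimension counts, I get $\ed_{k(Z)}\cX_{k(Z)} \le \ed_{k(\bX)}\cX_{k(\bX)} + \codim_{\bX} Z$. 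Here one must check the gerbe remains \emph{tame} along the chain — which is where the hypothesis that $\cX$ is an \'etale gerbe (hence Deligne--Mumford with automorphism groups of order prime to residue characteristics, automatically in char $0$ or over a perfect base with the right band) is used — and that the relevant local rings are DVRs, which follows from regularity of $\bX$.

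Finally I would combine this with the fiber-dimension input: the map $\Spec K \arr \cX_{k(Z)}$ lands in the gerbe over $k(Z)$, and by definition of essential dimension of the functor $F_{\cX_{k(Z)}}$, the object $\xi$ descends to a field $M$ with $k(Z) \subseteq M \subseteq K$ and $\trdeg_{k(Z)} M \le \ed_{k(Z)}\cX_{k(Z)}$; hence $\ed_k \xi \le \trdeg_k M \le \trdeg_k k(Z) + \ed_{k(Z)}\cX_{k(Z)} = \dim Z + \ed_{k(Z)}\cX_{k(Z)}$. Plugging in the chain estimate and $\dim Z = \dim \bX - \codim_{\bX} Z$, and identifying $\codim_\cX \xi = \codim_{\bX} Z$ (the codimension in $\cX$ of the point $\xi$ equals the codimension of its image $Z$ in $\bX$, since $\cX \arr \bX$ is a gerbe and thus has relative dimension zero on coarse spaces), I obtain
\[
\ed\xi \le \dim Z + \ed_{k(\bX)}\cX_{k(\bX)} + \codim_{\bX} Z - \codim_\cX\xi + \codim_\cX\xi,
\]
wait — more carefully: $\dim Z + \codim_{\bX} Z = \dim\bX$, so $\ed\xi \le \ed_{k(\bX)}\cX_{k(\bX)} + \dim\bX - \codim_{\bX}Z = \ed_{k(\bX)}\cX_{k(\bX)} + \dim\bX - \codim_\cX\xi$, as desired. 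The main obstacle I expect is the careful setup of the codimension-one chain together with the verification that tameness and the DVR hypotheses of Theorem~\ref{t.GenDVR} persist at each stage — equivalently, making precise the spreading-out of $\xi$ to a gerbe over an honest smooth $k$-scheme and controlling how the band behaves under specialization; the numerical accounting, by contrast, is routine once the geometric picture is in place.
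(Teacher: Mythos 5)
Your strategy is the same as the paper's: specialize one codimension at a time via Theorem~\ref{t.GenDVR}, then descend at the generic gerbe of the image of $\spec K$ and count transcendence degrees. Two points in the write-up need repair, though. First, the chain estimate you state, $\ed_{k(Z)}\cX_{k(Z)} \leq \ed_{k(\bX)}\cX_{k(\bX)} + \codim_{\bX}Z$, is not what iterating Theorem~\ref{t.GenDVR} produces and is too weak for your final count: each application gives $\ed_{k(s)}\cX_{s}\leq \ed_{k(\eta)}\cX_{\eta}$ with \emph{no} additive term, so the iteration yields $\ed_{k(Z)}\cX_{k(Z)}\leq \ed_{k(\bX)}\cX_{k(\bX)}$, and only this stronger form, combined with $\ed\xi \leq \dim Z + \ed_{k(Z)}\cX_{k(Z)}$, gives the claimed bound $\ed_{k(\bX)}\cX_{k(\bX)} + \dim\bX - \codim_{\cX}\xi$; your closing algebra in fact silently uses the stronger form.

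Second, and more substantively, the assertion that ``the relevant local rings are DVRs, which follows from regularity of $\bX$'' only covers the first link of your chain. After one specialization you are working over a chain member $\bX_{1}$, and you need the local ring of $\bX_{1}$ at the next codimension-one point to be a DVR (and $\bX_{1}$ to be smooth, so the process can continue); this does not follow from regularity of $\bX$ for an arbitrary chain. This is exactly what the paper's induction on $\codim_{\cX}\xi$ is designed to handle: since $k$ is perfect, the closure $\bY$ of the image of $\spec K$ in $\bX$ is generically smooth, so after shrinking around its generic point one may place $\bY$ inside a smooth hypersurface $\bX'\subseteq\bX$; the local ring of $\bX$ at the generic point of $\bX'$ is a DVR, Theorem~\ref{t.GenDVR} gives $\ed_{k(\bX')}\cX_{k(\bX')}\leq \ed_{k(\bX)}\cX_{k(\bX)}$, and the inductive hypothesis then applies to the pulled-back \'etale gerbe over the smooth scheme $\bX'$ (tameness is inherited by pullback, so that part of your worry is harmless; the smoothness of the chain members is the real point). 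With your chain re-chosen in this way --- each member a smooth hypersurface of the previous one through the image, after shrinking --- your proposal becomes the paper's proof.
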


\begin{proof}
We proceed by induction on $\codim\xi$.
If $\codim_{\cX}\xi = 0$, then the morphism $\xi\colon \spec K \arr \cX$ 
is dominant. Hence 
$\xi$ factors through $\cX_{k(\bX)}$, and the result is obvious.

Assume $\codim_{\cX}\xi > 0$. Let $\bY$ be the closure of 
the image of $\spec K$ in $\bX$.  Since we are assuming that
$k$ is perfect, $\bY$ is generically smooth over $\spec k$.
By restricting to a neighborhood of the generic point of $\bY$, we may 
assume that $\bY$ is contained in a smooth hypersurface $\bX'$ 
of $\bX$. Denote 
by $\cY$ and $\cX'$ the inverse images in $\cX$ of $\bY$ and $\cX'$ 
respectively. Set $R = \cO_{\bX,\bY}$ and call $\cX$ the pullback of $\cX$ to 
$R$. Then we can apply Theorem~\ref{t.GenDVR} to the gerbe $\cX_{R} 
\arr \spec R$ and conclude that
   \[
   \ed_{k(\bX')}\cX'_{k(\bX')} \leq 
   \ed_{k(\bX)}\cX_{k(\bX)}.
   \]
Using the inductive hypothesis we have
   \begin{align*}
   \ed\xi &\leq \ed_{k(\bX')}\cX_{k(\bX')} + \dim \bX' - \codim_{\cX'} \xi\\
   &\leq \ed_{k(\bX)}\cX_{k(\bX)} + \dim \bX - 1 - \codim_{\cX'} \xi\\
   &\leq \ed_{k(\bX)}\cX_{k(\bX)} + \dim \bX - \codim_{\cX} \xi.
   \qedhere
   \end{align*}
\end{proof}

\section{A genericity theorem for a smooth \dm stack}
\label{s.generic}

It is easy to see that Theorem~\ref{thm:genericity-gerbe}
fails if $\cX$ is not assumed to be a gerbe. 
In this section we will 
use Theorem~\ref{thm:genericity-gerbe} to
prove the following weaker result
for a wider class of \dm stacks.

Recall that a \dm stack $\cX$ over a field $k$ is \emph{tame} 
if the order of the automorphism group of any object of $\cX$ 
over an algebraically closed field is prime to the characteristic 
of $k$.

\begin{theorem}\label{thm:generic}
Let $\cX$ be a smooth integral tame \dm stack locally of finite type over a perfect field $k$. Then
   \[
   \ed\cX = \ed_{k(\bX)}\cX_{k(\bX)} + \dim \cX.
   \]
\end{theorem}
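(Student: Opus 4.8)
The plan is to reduce Theorem~\ref{thm:generic} to the gerbe case handled in Theorem~\ref{thm:genericity-gerbe}, by passing from the \dm stack $\cX$ to its generic gerbe. Write $\bX$ for the coarse moduli space of $\cX$ (or, more precisely, work with a suitable ``rigidification''), and let $K = k(\bX)$ be its function field. First I would establish the easy inequality $\ed \cX \geq \ed_{K}\cX_{K} + \dim \cX$. This follows from Theorem~\ref{thm.fiber-dimension} (the fiber dimension theorem), or rather its spirit: a generic object of $\cX_K$ can be ``spread out'' to a family over an open substack, and composing with the $\dim \cX$-dimensional parameter space of points of $\bX$ shows that objects over $K$ of essential dimension $e$ give rise to objects of $\cX$ of essential dimension at least $e + \dim \bX$; since $\dim \bX = \dim \cX$ for a \dm stack, this gives $\ed \cX \geq \ed_K \cX_K + \dim \cX$. (Concretely: take a field $L/K$ and $\xi \in \cX(L)$ with $\ed_K \xi$ large; realize $L$ as the function field of a variety dominating $\bX$ and transport $\xi$ to an object over a field of the right transcendence degree over $k$.)

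The substantive direction is $\ed \cX \leq \ed_{K}\cX_{K} + \dim \cX$. Here I would take an arbitrary extension $E/k$ and an object $\xi \in \cX(E)$, and bound $\ed_k \xi$. The key point is that $\xi$ determines a point of $\bX$ with some residue field, equivalently a morphism $\Spec E \to \bX$ whose image has a closure $\bY \subseteq \bX$; let $\codim_\cX \xi = \codim_{\bX} \bY$. Restricting $\cX$ over the generic point of $\bY$, we get a gerbe: indeed the residual gerbe of $\cX$ at a point of $\bX$ is an honest gerbe over the residue field, and this is precisely the situation Theorem~\ref{thm:genericity-gerbe} is built for. So the strategy is: replace $\cX$ by the gerbe $\cG \to \Spec k(\bY)$ obtained as the residual gerbe (or the pullback of $\cX$ to $\Spec k(\bY)$), observe that $\xi$ factors (after passing to a larger field) through $\cG$, apply Theorem~\ref{thm:genericity-gerbe} to $\cG$ sitting over the smooth base $\bY$, and combine with $\dim \bY = \dim \bX - \codim_\cX \xi = \dim \cX - \codim_\cX \xi$. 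One gets
\[
\ed_k \xi \;\leq\; \ed_{k(\bY)} \cG_{k(\bY)} \;+\; \dim \bY \;+\; (\text{correction}),
\]
and the correction term, which accounts for the transcendence degree needed to descend the point of $\bY$ itself relative to the generic gerbe of $\cX$, is controlled by the gerbe genericity theorem applied along $\bX \supseteq \bY$. The cleanest route is probably to run an induction on $\codim_\cX \xi$ exactly as in the proof of Theorem~\ref{thm:genericity-gerbe}: when $\codim_\cX \xi = 0$, $\xi$ factors through $\cX_K$ and we are done; for the inductive step, cut down to a smooth hypersurface $\bX'$ through $\bY$, use Theorem~\ref{t.GenDVR} on $\cO_{\bX, \bX'}$ to compare the generic gerbe of $\cX$ with that of $\cX|_{\bX'}$, and apply the inductive hypothesis to $\cX|_{\bX'}$.

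The main obstacle I anticipate is the passage from the \dm stack to a gerbe in a way that is compatible with essential dimension: $\cX$ itself is not a gerbe over $\bX$, so Theorem~\ref{thm:genericity-gerbe} does not apply directly, and one must be careful that the residual gerbe at the generic point of $\bY$ genuinely captures the automorphisms of $\xi$ and that an object of this gerbe, defined over a field of small transcendence degree, can be promoted to an object of $\cX$ itself without paying more than $\dim \bY$ extra parameters. Equivalently, one needs to know that $\cX$ is, étale-locally on $\bX$ near $\bY$, a quotient of a scheme by a finite group, so that lifting an object of the residual gerbe to $\cX$ is controlled; this is where tameness and the \dm hypothesis are essential (via the local structure theory of tame \dm stacks, e.g.\ Abramovich--Olsson--Vistoli), and where one must also invoke perfectness of $k$ to ensure $\bY$ is generically smooth so that Theorem~\ref{t.GenDVR} can be applied to the trait $\Spec \cO_{\bX,\bY}$. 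Once the local structure is in hand, the induction itself is a routine transcription of the argument already given for the gerbe case, with the gerbe genericity theorem doing the real work at each step.
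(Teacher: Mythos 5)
Your proposal correctly identifies the target (reduce to Theorem~\ref{thm:genericity-gerbe}) and the easy lower bound, but the inductive step you describe has a genuine gap. You propose to ``cut down to a smooth hypersurface $\bX'$ through $\bY$, use Theorem~\ref{t.GenDVR} on $\cO_{\bX,\bX'}$ to compare the generic gerbe of $\cX$ with that of $\cX|_{\bX'}$, and apply the inductive hypothesis to $\cX|_{\bX'}$.'' But Theorem~\ref{t.GenDVR} applies only to a tame \'etale \emph{gerbe} over the trait, and the pullback of $\cX$ to $\spec \cO_{\bX,\bX'}$ is a gerbe precisely when the automorphism groups do not jump along $\bX'$ --- which is the only situation where there is nothing to prove, since then $\xi$ already lies in the open locus over which $\cX \arr \bX$ is a gerbe and Theorem~\ref{thm:genericity-gerbe} applies directly. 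In the interesting case, where the closure $\bY$ of the image of $\xi$ lies in the jumping locus, the inequality $\ed$ of the residual gerbe at the generic point of $\bY$ $\leq \ed_{k(\bX)}\cX_{k(\bX)} + \codim$ is exactly the content of the theorem, not a ``routine transcription'' of the gerbe argument; knowing that $\cX$ is \'etale-locally a quotient $[X/G]$ does not by itself produce it (compare Example~\ref{ex1.genericity}(a), where the analogous comparison fails for a mildly singular quotient stack, so some geometric input beyond the local quotient structure and the gerbe case is indispensable).

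The paper's proof differs in two respects that your sketch does not supply. First, $\cX$ is only assumed locally of finite type and need not have finite inertia (e.g.\ $\fM^{\mathrm{fin}}_{g,n}$), so before speaking of the moduli space one reduces to the finite-inertia case via Lemma~\ref{lem:keel-mori}; your opening appeal to ``the coarse moduli space of $\cX$'' needs this step. Second, and crucially, instead of running the DVR induction on $\cX$ itself, one forms the deformation to the normal bundle of $\cY \subseteq \cX$ (with $\cY$ the reduced preimage of $\bY$, made smooth by shrinking): this yields a smooth stack $\cM \arr \PP^1_k$ with fiber $\cN$ over $\infty$ equal to the normal bundle, and one checks --- using tameness and the \'etale-local description as $[V/G]$ with $G$ finite acting linearly --- that the open substack $\cM^0$ of points with minimal stabilizer order meets $\cN$. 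One then lifts $\xi$ to an object $\eta$ of $\cN^0(L)$ (assuming $L$ infinite, as one may), notes $\ed\xi \leq \ed\eta$ because $\cN \arr \cY$ is representable, and applies Theorem~\ref{thm:genericity-gerbe} to the gerbe $\cM^0$, whose moduli space has function field $k(\bX)(t)$ and whose generic gerbe is $\cX_{k(\bX)(t)}$; since $\eta$ has codimension at least $1$ in $\cM^0$, the extra $+1$ in the dimension of the moduli space cancels and one obtains $\ed\xi \leq \ed_{k(\bX)}\cX_{k(\bX)} + \dim\bX$. This normal-cone degeneration, which moves an object with large automorphism group into a gerbe having the same generic gerbe as $\cX$, is the key idea missing from your proposal.
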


Here the dimension of $\cX$ is the dimension of the moduli space of any non-empty open substack of $\cX$ with finite inertia.

Before proceeding with the proof, we record two immediate 
corollaries. 

\begin{corollary} \label{cor.generic1}
If $\cX$ is as above and $\cU$ is an open dense substack,
then $\ed_{k}\cM = \ed_{k}\cU$. \qed
\end{corollary}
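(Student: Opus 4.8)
The plan is to deduce Corollary~\ref{cor.generic1} directly from Theorem~\ref{thm:generic} applied twice. First I would observe that an open dense substack $\cU \subseteq \cX$ is itself a smooth integral tame \dm stack locally of finite type over $k$, so Theorem~\ref{thm:generic} applies to $\cU$ as well as to $\cX$. Moreover $\cU$ and $\cX$ have the same generic point, hence the same generic gerbe: since $\cU$ is dense, the function field $k(\bX)$ of the moduli space is unchanged, and $\cU_{k(\bX)} \cong \cX_{k(\bX)}$ as gerbes over $k(\bX)$. Likewise $\dim \cU = \dim \cX$, because the dimension is computed on any non-empty open substack with finite inertia and such a substack can be chosen inside $\cU$. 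Therefore
\[
\ed_k \cU = \ed_{k(\bX)} \cU_{k(\bX)} + \dim \cU = \ed_{k(\bX)} \cX_{k(\bX)} + \dim \cX = \ed_k \cX,
\]
which is the claimed equality (with $\cM$ in the statement understood to be $\cX$).

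There is essentially one point that needs a word of care rather than a genuine obstacle: checking that ``open dense substack'' really does preserve all the hypotheses and all the invariants appearing in Theorem~\ref{thm:generic}. Smoothness, being \dm, tameness, and being locally of finite type are all local properties stable under passing to open substacks, so these are immediate. Integrality follows because a non-empty open substack of an integral stack is integral. The only slightly delicate issue is the identification of the two sides of the formula: one must know that the generic gerbe and the dimension are genuinely intrinsic to the generic point, so that they agree for $\cX$ and for any open dense $\cU$. This is built into the way \emph{dimension} was defined right after the statement of Theorem~\ref{thm:generic} (via any non-empty open substack with finite inertia), and the generic gerbe is literally the restriction of $\cX$ to $\Spec k(\bX)$, which only sees a dense open neighborhood of the generic point. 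So there is no real obstacle here; the proof is a two-line formal consequence, which is presumably why the corollary is stated with a \qed rather than a separate proof.
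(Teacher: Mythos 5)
Your proposal is correct and is exactly the argument the paper intends: Corollary~\ref{cor.generic1} is stated with no separate proof precisely because it follows by applying Theorem~\ref{thm:generic} to both $\cX$ and the dense open $\cU$, which share the same generic gerbe and dimension (and inherit all the hypotheses), just as you write. Your care about the intrinsic nature of the dimension and of the generic gerbe is the right point to check, and your handling of it is fine.
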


\begin{corollary} \label{cor.generic2}
If the conditions of the Theorem~\ref{thm:generic} are satisfied, 
and the generic object of $\cX$ has no non-trivial 
automorphisms (i.e., $\cX$ is an orbifold,
in the topologists' terminology), then $\ed_{k}\cX = \dim \cX$.
\end{corollary}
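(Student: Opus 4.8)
The plan is to read the corollary off from Theorem~\ref{thm:generic} once we know that the generic gerbe contributes nothing. By Theorem~\ref{thm:generic},
\[ \ed_k \cX = \ed_{k(\bX)}\cX_{k(\bX)} + \dim \cX , \]
so it suffices to prove $\ed_{k(\bX)}\cX_{k(\bX)} = 0$.

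To see this, I would first observe that ``the generic object of $\cX$ has no non-trivial automorphisms'' means exactly that the inertia of $\cX$ is trivial at the generic point. Since $\cX$ is Deligne--Mumford, after shrinking to a non-empty open substack with finite inertia — which changes neither $\ed_k\cX$, by Corollary~\ref{cor.generic1}, nor $\dim\cX$ — the inertia stack $I_{\cX}\arr\cX$ is finite and unramified, so the locus on which it is trivial is a non-empty open substack $\cU\subseteq\cX$, and such a $\cU$ is an algebraic space. The generic point of $\bX$ lies in $\cU$, hence $\cX_{k(\bX)} = \cU_{k(\bX)}$ is just $\Spec k(\bX)$; equivalently, $\cX_{k(\bX)}$ is a gerbe over $k(\bX)$ banded by the trivial group and admitting a rational point, i.e.\ the neutral such gerbe. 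Either way the functor $F_{\cX_{k(\bX)}}$ takes a one-point value on every field extension of $k(\bX)$, so $\ed_{k(\bX)}\cX_{k(\bX)} = 0$, and the displayed formula yields $\ed_k\cX = \dim\cX$.

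Alternatively — and this avoids any appeal to Theorem~\ref{thm:generic} beyond what is already packaged in Corollary~\ref{cor.generic1} — one can argue directly: with $\cU$ the open dense substack above, Corollary~\ref{cor.generic1} gives $\ed_k\cX = \ed_k\cU$, and since $\cU$ is an algebraic space, Example~\ref{ex.ed-variety} gives $\ed_k\cU = \dim\cU = \dim\cX$, the last equality because $\cU$ is dense in the integral stack $\cX$. I do not anticipate a genuine obstacle; the only point requiring a little care is the openness of the trivial-inertia locus, which is precisely where the separatedness/finiteness of the inertia of a Deligne--Mumford stack is used.
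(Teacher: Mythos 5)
Your main argument is exactly the paper's proof: the no-nontrivial-automorphisms hypothesis makes the generic gerbe $\cX_{k(\bX)}$ a scheme (indeed $\Spec k(\bX)$), so $\ed_{k(\bX)}\cX_{k(\bX)}=0$ and Theorem~\ref{thm:generic} gives $\ed_k\cX=\dim\cX$; you merely spell out the openness of the trivial-inertia locus, which the paper leaves implicit. Your alternative route via Corollary~\ref{cor.generic1} and Example~\ref{ex.ed-variety} is just a repackaging of the same genericity input, so there is nothing genuinely different to compare.
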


\begin{proof} Here the generic gerbe $\cX_K$ is a scheme,
so $\ed_{K}\cX_K = \dim \cX$.
\end{proof}

\begin{proof}[Proof of Theorem~\ref{thm:generic}]
The inequality 
$\ed\cX \geq \ed_{k(\bX)}\cX_{k(\bX)} + \dim \cX$ 
is obvious: so we only need to show that
   \begin{equation}\label{eq:inequality}
   \ed\xi \leq \ed_{k(\bX)}\cX_{k(\bX)} + \dim \cX
   \end{equation}
for any field extension $L$ of $k$ and any object 
$\xi$ of $\cX(L)$. 

First of all, let us reduce the general result to the case that $\cX$ has finite inertia. The reduction is immediate from the following lemma, that is essentially due to Keel and Mori.

\begin{lemma}[Keel--Mori]\label{lem:keel-mori}
There exists an integral \dm stack with finite inertia $\cX'$, 
together with an étale representable morphism of finite type 
$\cX' \arr \cX$, and a factorization $\spec L \arr \cX' \arr \cX$ 
of the morphism $\spec L \arr \cX$ corresponding to $\xi$.
\end{lemma}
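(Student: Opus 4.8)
The plan is to produce the stack $\cX'$ by taking a suitable étale neighborhood of the point $\xi$ and then applying the Keel--Mori theorem to pass to an open substack with finite inertia. First I would reduce to the case where $L$ is finitely generated over $k$, so that the morphism $\spec L \arr \cX$ factors through a finite-type substack; this uses the limit-preserving property of $\cX$ recorded in Remark~\ref{rem.finite}. Since $\cX$ is a Deligne--Mumford stack, there is an étale atlas $U \arr \cX$ with $U$ a scheme, and the fiber product $\spec L \times_{\cX} U$ is nonempty, so after passing to a connected component we obtain a finite-type affine scheme $V$ together with an étale morphism $V \arr \cX$ and a lift $\spec L \arr V$ of $\xi$. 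Replacing $V$ by a neighborhood of the image point, we may assume $V$ is integral.

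Next I would form $\cX'$ as an open substack of $\cX$ (or of an étale cover) that has finite inertia and still contains the image of $\xi$. The key point is that the locus in $\cX$ where the inertia is finite is open: the inertia stack $I_{\cX} \arr \cX$ is unramified since $\cX$ is Deligne--Mumford, and by generic flatness together with the fact that $\cX$ is integral, there is a dense open substack over which $I_{\cX}$ is finite. The only issue is whether the image of $\xi$ lies in this open substack — but if not, one first replaces $\cX$ by the étale cover $[V/(V\times_{\cX}V \rightrightarrows V)]$ supplied by the atlas above, localized around $\xi$; on a small enough such cover the automorphism group scheme of the image point becomes the "generic" stabilizer, which is finite because $\cX$ is Deligne--Mumford (automorphism groups are finite and étale at every point, hence the inertia is quasi-finite, and on a suitable open it is finite over the base). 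Thus after these reductions $\cX'$ is an integral Deligne--Mumford stack with finite inertia, equipped with an étale representable finite-type morphism $\cX' \arr \cX$, and the lift $\spec L \arr \cX' \arr \cX$ exists by construction.

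I expect the main obstacle to be the bookkeeping needed to guarantee that the étale-locality reductions are compatible: one must ensure simultaneously that (i) the morphism $\cX' \arr \cX$ is representable (not merely étale), (ii) $\cX'$ remains integral, and (iii) the point $\xi$ still factors through $\cX'$. Representability is the delicate one — an arbitrary étale cover of a Deligne--Mumford stack need not be representable over it — so the right construction is to take $\cX'$ to be a genuine open substack of $\cX$ whenever possible, using the openness of the finite-inertia locus, and to invoke an étale cover only to move $\xi$ into that locus, choosing the cover small enough (a single connected component of the atlas neighborhood of $\xi$) that the map to $\cX$ is representable. Once $\cX'$ is in hand, Theorem~\ref{t.GenDVR} and Theorem~\ref{thm:genericity-gerbe} handle the finite-inertia case, so this lemma is exactly the bridge from the general tame Deligne--Mumford stack to the gerbe-over-a-scheme situation already treated, and the remainder of the proof of Theorem~\ref{thm:generic} can proceed by devissage on the rigidification of $\cX'$ along its inertia.
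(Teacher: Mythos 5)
There is a genuine gap, in fact two. Your construction begins by lifting $\spec L \arr \cX$ to a connected component $V$ of a scheme atlas $U \arr \cX$. Such a lift does not exist in general: the fiber $\spec L \times_{\cX} U$ is a nonempty \'etale algebraic space over $L$, but it need not have an $L$-rational point (take $\cX = \cB_{k}G$ for a finite group $G$ and $\xi$ a nontrivial $G$-torsor over $L$; a lift to the atlas $\spec k \arr \cB_{k}G$ would trivialize the torsor). Producing an \'etale representable morphism to $\cX$ through which \emph{every} field-valued point factors is exactly the nontrivial content of the lemma; the paper takes this from \cite[Lemma~2.2]{conrad-keel-mori}, where the cover $\cW$ is a stack, not a scheme, engineered precisely to have this lifting property.

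Second, your mechanism for arranging finite inertia does not work. You argue that the finite-inertia locus is open and dense, and that if the image of $\xi$ misses it, an \'etale localization around $\xi$ repairs this because the automorphism group of $\xi$ is finite. But finiteness of the fibers of the inertia $I_{\cX} \arr \cX$ (which holds at every point of a \dm stack) is much weaker than finiteness, i.e.\ properness, of the inertia over a neighborhood, and the latter can fail in every neighborhood of a point with finite stabilizer: the paper's own example $\fM^{\mathrm{fin}}_{g,n}$ has finite automorphism groups at all of its points and yet does not have finite inertia. So the assertion that ``on a suitable open it is finite over the base'' with the image of $\xi$ inside that open is precisely what has to be proved, and no argument is offered. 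The paper's proof settles both issues simultaneously: Conrad's lemma also supplies a finite flat representable cover $Z \arr \cW$ with $Z$ a scheme, so $\cW$ is the quotient of $Z$ by the finite flat equivalence relation $Z \times_{\cW} Z \double Z$ and therefore has finite inertia structurally, not by shrinking; one then takes $\cX'$ to be a connected component of $\cW$ containing a lift of $\spec L$, and integrality and representability come for free from this choice.
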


\begin{proof}
We follow an argument due to Conrad. By~\cite[Lemma~2.2]{conrad-keel-mori} 
there exist

\begin{enumeratei}

\item an étale representable morphism $\cW \arr \cX$ 
such that every morphism $\spec L \arr \cX$, where $L$ is a field, 
lifts to $\spec L \arr \cW$, and 

\item a finite flat representable map $Z \arr \cW$, where $Z$ is a scheme. 

\end{enumeratei}

Condition (ii) implies that $\cW$ is a quotient of $Z$ by 
a finite flat equivalence relation $Z \times_{\cW} Z \double Z$, 
which in particular tells us that $\cW$ has finite inertia. 
We can now take $\cX'$ to be a connected component of 
$\cW$ containing a lifting $\spec L \arr \cW$ of $\spec L \arr \cX$.
\end{proof}

Suppose that we have proved the inequality~(\ref{eq:inequality}) whenever $\cX$ has finite inertia. If denote by $\xi'$ the object of $\cX'$ corresponding to a lifting $\spec L \arr \cX'$, we have
   \[
   \ed \xi \leq \ed \xi' \leq \ed_{k(\bX')}\cX'_{k(\bX')}.
   \]
On the other hand, the morphism $\cX'_{k(\bX')} \arr \cX_{k(\bX)}$ 
induced by the étale representable morphism 
$\cX' \arr \cX$ is representable with fibers of dimension~0, hence
   \[
   \ed_{k(\bX')}\cX'_{k(\bX')} = \ed_{k(\bX)}\cX'_{k(\bX')}
      \leq \ed_{k(\bX)}\cX_{k(\bX)}
   \]
by Theorem~\ref{thm.fiber-dimension} (the first equality follows immediately from the fact that the extension $k(\bX) \subseteq k(\bX')$ is finite).

So, in order to prove the inequality~(\ref{eq:inequality}) we may assume that $\cX$ has finite inertia. Denote by $\bY \subseteq \bX$ 
the closure of the image of the composite 
$\spec L \arr \cX \arr \bX$, where 
$\spec L \arr \cX$ corresponds to $\xi$, 
and call $\cY$ the reduced inverse image 
of $\bY$ in $\cX$. Since $k$ is perfect, 
$\cY$ is generically smooth; by restricting 
to a neighborhood of the generic point of $\bY$ 
we may assume that $\cY$ is smooth.

Denote by $\cN \arr \cY$ the normal bundle 
of $\cY$ in $\cX$. Consider the deformation 
to the normal bundle $\phi\colon \cM \arr \PP^{1}_{k}$ 
for the embedding $\cY \subseteq \cX$. This is 
a smooth morphism such that 
$\phi^{-1}\AA^{1}_{k} = 
\cX\times_{\spec k}\AA^{1}_{k}$ and $\phi^{-1}(\infty) = \cN$, 
obtained as an open substack of the blow-up of 
$\cX\times_{\spec k}\PP^{1}_{k}$ along $\cY\times\{\infty\}$ 
(the well-known construction, explained for example 
in \cite[Chapter~5]{fulton}, generalizes immediately 
to algebraic stacks). Denote by $\cM^{0}$ the open 
substack whose geometric points are the geometric 
points of $\cM$ with stabilizer of minimal order (this is well 
defined because $\cM$ has finite inertia).

We claim that $\cM^{0} \cap \cN \neq \emptyset$. This would be evident if $\cX$ 
were a quotient stack $[V/G]$, where $G$ is a finite group of order not 
divisible by the characteristic of $k$, acting linearly on a vector space $V$, 
and $\cY$ were of the form $[X/G]$, where $W$ is a $G$-invariant linear 
subspace of $V$. However, étale locally on $\bX$ every tame \dm stack is a 
quotient $[X/G]$, where $G$ is a finite group of order not divisible by the 
characteristic of $k$ (see, e.g.,~\cite[Lemma 2.2.3]{dan-vistoli02}). 
Since $G$ is tame and $X$ is smooth, it is well known that étale-locally on $
\bX$, the stack $\cX$ has the desired form, and this is enough to prove
the claim.

Set $\cN^{0} \eqdef \cM^{0} \cap \cN$. The object $\xi$ corresponds to a 
dominant morphism $\spec L \arr \cY$. The pullback $\cN\times_{\cY}\spec K$ is 
a vector bundle $V$ over $\spec L$, and the inverse image $\cN^{0}\times_{\cY} 
\spec L$ of $\cN^{0}$ is not empty. We may assume that $L$ is 
infinite; otherwise $\ed\xi = 0$ and there is
nothing to prove. Assuming that $L$ is infinite, 
$\cN^{0}\times_{\cY} \spec L$ has an $L$-rational point, so 
there is a lifting $\spec L \arr \cN^{0}$ of $\spec L \arr \cY$, corresponding 
to an object $\eta$ of $\cN^{0}(\spec L)$. Clearly the essential 
dimension of $\xi$ as an object of $\cX$ is the same as its essential dimension 
as an object of $\cY$, and $\ed\xi \leq \ed\eta$. Let us apply Theorem~
\ref{thm:genericity-gerbe} to the gerbe $\cM^{0}$. The function field of the 
moduli space $\bM$ of $\cM$ is $k(\bX)(t)$, and its generic gerbe is $
\cX_{k(\bX)(t)}$; by Proposition~\ref{p.extensions}, we have $\ed_{k(\bX)(t)}
\cX_{k(\bX)(t)} \leq \ed_{k(\bX)}\cX_{k(\bX)}$. The composite $\spec L \arr 
\cN^{0} \subseteq \cM^{0}$ has codimension at least~$1$, hence we obtain
   \begin{align*}
   \ed \xi &< \ed_{k(\bX)(t)}\cX_{k(\bX)(t)} + \dim\bM\\
   &\leq \ed_{k(\bX)}\cX_{k(\bX)} + \dim\bX + 1.
   \end{align*}
This concludes the proof.
\end{proof}


%

\begin{example} \label{ex1.genericity} 

The following examples show that Corollary~\ref{cor.generic2}
(and thus Corollary~\ref{cor.generic1} and Theorem~\ref{thm:generic})
fail for more general algebraic stacks, such as
(a) singular \dm stacks, (b) non \dm stacks, including
quotient stacks of the form $[W/G]$, where
$W$ is a smooth complex affine variety with an action of
a connected complex reductive linear algebraic group $G$ 
acting on $W$.

\smallskip
(a) Let $r, n \ge 2$ be integers. Assume that 
the characteristic of $k$ is prime to $r$. 
Let $W \subseteq \AA^{n}$ be the Fermat hypersurface 
defined by the equation $x_{1}^{r} + \dots + x_{n}^{r} = 0$ and 
$\Delta \subset \AA^n$ be the union of 
the coordinate hyperplanes defined 
by $x_i = 0$, for $i = 1, \dots, n$.
The group $G := \mmu_{r}^{n}$ acts on $\AA^n$ 
via the formula
\[
(s_{1}, \dots, s_{n})(x_{1}, \dots, x_{n}) = (s_{1}x_{1}, \dots, s_{n}x_{n})
\, , 
\]
leaving $W$ and $\Delta$ invariant. Let $\cX := [W/G]$.
Since the $G$-action on $W \setminus \Delta$ is free,
$\cX$ is generically an affine scheme of dimension $n-1$. 
On the other hand, $[\{0\}/G] \simeq \cB_{k}\mmu_{r}^{n}$ 
is a closed substack of $\cX$ of essential dimension $n$;
hence, $\ed(\cX) \ge n$.
%

\smallskip
(b) Consider the action of $G = \GL_n$ on the affine space $M$ of all
$n \times n$-matrices by multiplication on the left. 
Since $G$ has a dense orbit, and the stabilizer of a non-singular 
matrix in $M$ is trivial, we see that $[M/G]$ is generically 
a scheme of dimension $0$.
On the other hand, let $Y$ be the locus of matrices of rank~$n-1$, which 
is a locally closed subscheme of $M$. There is a surjective 
$\GL_n$-equivariant morphism $Y \arr \PP^{n-1}$, sending 
each matrix of rank $n-1$ to its kernel,
which induces a morphism $[Y/G] \arr \PP^{n-1}$. 
If $L$ is an extension of $\CC$, every $L$-valued 
point of $\PP^{n-1}$ lifts to an $L$-valued point of $Y$.
Hence,
   \[
   \ed {[M/G]} \geq \ed {[Y/G]} \geq n-1 \, .
   \]
As an aside, we remark that a similar argument with $Y$ replaced
by the locus of matrices of rank $r$, shows that the essential 
dimension of $[M/G]$ is in fact the maximum of the dimensions 
of the Grassmannians of $r$-planes in $\CC^{n}$, as $r$ ranges 
between $1$ and $n-1$, which is $n^{2}/4$ 
if n is even, and $(n^{2} - 1)/4$, if $n$ is odd.
\end{example}

\begin{question}
Under what hypotheses does the genericity theorem hold? 
Let $\cX \arr \spec k$ be an integral algebraic stack. 
Using the results of \cite[Chapter~11]{LMB}, one can define 
the generic gerbe $\cX_{K} \to \spec K$ of $\cX$, which 
is an fppf gerbe over a field of finite transcendence degree 
over $k$. What conditions on $\cX$ ensure the equality
   \[
   \ed_{k} \cX = \ed_{K}\cX_{K} + \trdeg_{k}K\,?
   \]
Smoothness seems necessary, as there are counterexamples even for \dm 
stacks with very mild singularities; see Example~\ref{ex1.genericity}(a).
We think that the best result that one can hope for is the following. 
Suppose that $\cX$ is smooth with quasi-affine diagonal, 
and let $\xi\in \cX(\spec L)$ be a point. Assume that 
the automorphism group scheme of $\xi$ over $L$ is linearly 
reductive. Then $\ed \xi \leq \ed_{K}\cX_{K} + \trdeg_{k}K$. 
In particular, if all the automorphism groups are linearly reductive, 
then $\ed \cX = \ed_{K}\cX_{K} + \trdeg_{k}K$.
\end{question}

\section{The essential dimension of $\cM_{g,n}$ for $(g,n) \neq (1,0)$}
\label{s.ed-Mgn}

Recall that the base field $k$ is assumed 
to be of characteristic $0$.

The assertion that $\ed\overline{\cM}_{g,n} = \ed \cM_{g,n}$
whenever $2g-2+n > 0$ is an immediate consequence of 
Corollary~\ref{cor.generic1}.  Moreover, if $g \geq 3$, or $g = 2$ and
$n \geq 1$, or $g = 1$ and $n \geq 2$, then
   \[
    \ed_{k}\cM_{g,n} = \ed_{k}\overline{\cM}_{g,n}
    = 3g - 3 + n.
    \]
Indeed, in all these cases the automorphism group of a generic
object of $\cM_{g,n}$ is trivial, so the generic gerbe is trivial, and
$\ed\cM_{g,n} = \dim \cM_{g,n}$ by Corollary~\ref{cor.generic2}.

The remaining cases of Theorem~\ref{thm.curves}, with the exception of
$(g,n) = (1,0)$, are covered by the following proposition.
The case where $(g, n) = (1, 0)$ requires a separate argument 
which will be carried out in the next section.

\begin{proposition} \label{prop.sect8}
\hfil
\begin{enumeratea}

\item $\ed \cM_{0,1} = 2$, 

\item  $\ed \cM_{0,1} = \ed \, \cM_{0,2} = 0$,

\item  $\ed\cM_{1,1}=2$,

\item  $\ed \cM_{2,0} = 5$.

\end{enumeratea}
\end{proposition}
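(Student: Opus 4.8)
The plan is to compute each of the four essential dimensions by combining the genericity theorem (Theorem~\ref{thm:generic}) with the gerbe formula (Theorem~\ref{t.edGerbe}), after identifying the generic automorphism group and the relevant cohomology class in each case. In every case $\cM_{g,n}$ is a smooth integral tame (characteristic $0$) Deligne--Mumford stack of finite type, so Theorem~\ref{thm:generic} applies and gives $\ed\cM_{g,n} = \ed_{k(\bX)}\cX_{k(\bX)} + \dim\cM_{g,n}$, where $\bX$ is the coarse moduli space and $\cX_{k(\bX)}$ the generic gerbe. Thus the whole problem reduces to (i) computing $\dim\cM_{g,n} = 3g-3+n$, (ii) determining the automorphism group $G$ of the generic object, so the generic gerbe is banded by $G$, and (iii) computing the essential dimension of that gerbe.

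For part~(b): $\cM_{0,1}$ and $\cM_{0,2}$ have dimension $3\cdot 0 - 3 + n = n - 3$, i.e. $-2$ and $-1$ respectively, which is the wrong sign, so instead I observe directly that over any field $K$ there is a unique object up to isomorphism: $(\PP^1, \infty)$ for $n=1$ and $(\PP^1, 0, \infty)$ for $n=2$ (a genus-$0$ curve with a rational point is $\PP^1_K$, and one or two marked points can be normalized), so the functor $F_{\cM_{0,n}}$ has a single value on every field and $\ed = 0$. Strictly speaking one should note $\cM_{0,1}$ and $\cM_{0,2}$ are gerbes over $\Spec k$ (banded by $\PGL_2$-stabilizers $\mathbb{G}_a\rtimes\mathbb{G}_m$ and $\mathbb{G}_m$), but the single-orbit description is the cleanest route. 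For part~(a), $\cM_{0,1}$ should instead read $\cM_{0,0}$ — wait, as written (a) and the first half of (b) both say $\cM_{0,1}$; I will treat (a) as the assertion $\ed\cM_{0,0}=2$: here $\cM_{0,0}$ is the gerbe $\cB_k\PGL_2$ over $\Spec k$ (every genus-$0$ curve is a conic, i.e. a Brauer--Severi variety of dimension $1$), so $\ed\cM_{0,0} = \ed_k\PGL_2 = 2$ by the classical computation (conics over $K$ are classified by quaternion algebras, with a versal family over a $2$-dimensional base; see \cite{reichstein}). For part~(c), $\cM_{1,1}$ has dimension $1$, and the generic elliptic curve has automorphism group $\mmu_2$ (the inversion $[-1]$), so the generic gerbe is a $\mmu_2$-gerbe over $k(\bX) = k(j)$; its class in $\H^2(k(j),\mmu_2)$ is the obstruction to the existence of a $\mmu_2$-fixed point, which one checks has index $2$ (it is the class of the quaternion algebra attached to the Weierstrass form, nontrivial since $j$ is transcendental), so by Theorem~\ref{t.edGerbe}(b) the generic gerbe has essential dimension $\ind = 2$... but that would give $\ed\cM_{1,1} = 1 + 2 = 3$, not $2$. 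So I must instead argue more carefully: the correct statement is that $\cM_{1,1}$ is itself a quotient stack of an explicit $2$-dimensional scheme — namely the $(a_4,a_6)$-plane of Weierstrass equations $y^2 = x^3 + a_4 x + a_6$ modulo $\mathbb{G}_m$ acting with weights $(4,6)$ — hence $\ed\cM_{1,1} \le 2$; and the lower bound $\ed\cM_{1,1}\ge 2$ follows because specializing $j$ to a value with extra automorphisms (or using the fact that the $\mmu_2$-gerbe over $k(j)$ already forces $\ed\ge 1 + 1 = 2$ once one knows the gerbe is nontrivial) gives $\ge 2$. I would present it as: upper bound from the Weierstrass quotient presentation; lower bound from Theorem~\ref{thm:generic} plus the fact that the generic gerbe is a nontrivial $\mmu_2$-gerbe, which has essential dimension $\ge 1$.

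For part~(d): $\dim\cM_{2,0} = 3\cdot 2 - 3 = 3$, and a generic genus-$2$ curve is hyperelliptic with automorphism group exactly $\mmu_2$ (the hyperelliptic involution), so the generic gerbe is a $\mmu_2$-gerbe over $k(\bM_2)$. I would compute the Brauer class of this gerbe: a genus-$2$ curve is a double cover of a conic branched at $6$ points; the hyperelliptic involution is central, and the quotient curve is a genus-$0$ curve, i.e. a Brauer--Severi curve, whose class in $\H^2(k(\bM_2),\mmu_2)$ is exactly the band's class of the gerbe. For the generic genus-$2$ curve this conic is nonsplit — equivalently the relevant class has index $2$ — which by Theorem~\ref{t.edGerbe}(b) gives the generic gerbe essential dimension $\ind = 2$, hence $\ed\cM_{2,0} = 3 + 2 = 5$. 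Concretely, the versal presentation is: binary sextics $f(x_0,x_1)$ modulo $\GL_2$ (acting on $(x_0,x_1)$ and by scaling $y$), giving the $\mmu_2$-gerbe structure over the coarse space of binary sextics, and the obstruction class is the one measuring whether the conic $\{y^2 = \text{(value of a discriminant-type quadratic form)}\}$ splits.

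The main obstacle will be pinning down, in cases (c) and (d), that the generic gerbe is a \emph{nontrivial} $\mmu_2$-gerbe and computing its index. The cleanest way is probably not to compute the Brauer class abstractly but to produce, on one hand, an explicit $(\text{something})$-dimensional versal family (Weierstrass equations for $\cM_{1,1}$, binary sextics for $\cM_{2,0}$) giving the upper bound directly, and on the other hand to exhibit a specialization or a valuation-theoretic argument — of exactly the flavor used in the proof of Theorem~\ref{t.cdP} — showing the generic gerbe does not split, hence contributes at least $1$ to the essential dimension on top of $\dim\cM$. I expect the upper bounds to be routine quotient-stack presentations and the genericity theorem to do the rest; the subtle point is the exact value of the index of the generic $\mmu_2$-class, for which one must know that the associated conic (quotient of the curve by its hyperelliptic involution) is anisotropic over the generic point of the coarse moduli space.
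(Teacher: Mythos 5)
Your treatment of (a) and (b) matches the paper's (direct identification of $\cM_{0,0}$ with $\cB_k\PGL_2$, and the observation that a genus-$0$ curve with one or two rational points is $(\PP^1,0)$ or $(\PP^1,0,\infty)$), and is fine. The problems are in (c) and (d).

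In (c) you assert that the generic gerbe of $\cM_{1,1}$ over $k(j)$ is a \emph{nontrivial} $\mmu_2$-gerbe of index $2$. This is false, and you in fact noticed the symptom yourself: if it were true, Theorem~\ref{t.edGerbe}(b) together with Theorem~\ref{thm:generic} would force $\ed\cM_{1,1}=1+2=3$, contradicting both the claimed answer and your own upper bound of $2$ from the weighted $(a_4,a_6)$-presentation. Instead of diagnosing the contradiction you kept the false claim and patched around it, so as written your argument simultaneously implies $\ed\cM_{1,1}=3$ and $\ed\cM_{1,1}\le 2$. The correct fact, which is exactly what the paper uses, is that the generic gerbe is \emph{neutral}: for any $j_0$ there is an elliptic curve with $j$-invariant $j_0$ defined over the field generated by $j_0$ (Silverman, Prop.\ III.1.4(c)), so the gerbe over $\Spec k(j)$ has a section and is isomorphic to $\cB_{k(j)}\mmu_2$, whose essential dimension is $1$; the genericity theorem then gives $\ed\cM_{1,1}=1+1=2$ in one line, with no separate Weierstrass upper bound needed. (Your fallback lower bound ``nontrivial gerbe $\Rightarrow \ed\ge 1$'' happens to reach a true inequality, since even the neutral gerbe $\cB_{k(j)}\mmu_2$ has $\ed=1$, but the stated justification is wrong and the ``index $2$'' claim must be deleted.)

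In (d) your outline coincides with the paper's (which proves $\ed\cM_{2,0}=5$ as the case $g=2$ of Theorem~\ref{t.hyperelliptic}): the generic gerbe is the $\mmu_2$-gerbe of square roots of $\cO_P(-\Delta)\simeq\omega_{P}^{\otimes(g+1)}$ on the quotient conic $P$, and since $g+1=3$ is odd its class is the image of $[P]$ under $\H^1(K,\PGL_2)\to\H^2(K,\mmu_2)$. But the entire substance of the proof is then the assertion you explicitly leave open, namely that $P$ has no rational point over the generic point $K$ of the coarse space, so that the class has index $2$ and Theorem~\ref{t.edGerbe}(b) gives $\ed=2$ for the gerbe. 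The paper does not treat this as known: it proves $P(K)=\emptyset$ by starting from a conic $C$ without rational points over some $L$, taking the tautological section of $\omega_{C_F/F}^{-(g+1)}$ over the function field $F$ of $\H^0(C,\omega_{C/L}^{-(g+1)})$, noting that $C_F(F)=\emptyset$ because $F/L$ is purely transcendental, and observing that the resulting map to the moduli problem is dominant, so $K\subseteq F$ and $P_F\simeq C_F$. Without this (or some equivalent anisotropy argument), your proof of (d) establishes only $\ed\cM_{2,0}\in\{4,5\}$, since a split conic would make the gerbe class trivial. So (d) is incomplete at its one nontrivial step, and (c) needs to be rewritten around neutrality of the generic gerbe.
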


\begin{proof}
(a) Since $\cM_{0,0}\simeq \cB_{k}\PGL_2$, we have
$\ed\cM_{0,0} = \ed \PGL_2 = 2$, where the last inequality is proved
in~\cite[Lemma 9.4 (c)]{reichstein} (the argument there is valid for 
any field $k$ of characteristic $\ne 2$).

Alternative proof of (a):
The inequality $\ed\cM_{0,0} \leq 2$ holds because 
every smooth curve of genus~$0$ over a field $K$ is 
a conic $C$ in $\PP^{2}_{K}$. After a change of coordinates
in $\PP^2_K$ we may assume that $C$ is given by an equation
of the form $ax^{2} + by^{2} + z^{2} = 0$ for some 
$a$, $b \in K$, and hence descends to the field 
$k(a,b)$ of transcendence degree $\le 2$ over $k$.
The opposite inequality follows from Tsen's theorem.

\smallskip
(b) A smooth curve $C$ of genus $0$ with one or two rational points 
over an extension $K$ of $k$ is isomorphic to $(\PP^{1}_{k}, 0)$ or
$(\PP^{1}_{k}, 0, \infty)$. Hence, it is defined over $k$. 

Alternative proof of (b): $\cM_{0,2} = \cB_{k}\gm$ and $\cM_{0,1} = 
\cB_{k}(\gm \ltimes \GG_{\rma})$, 
and the groups $\gm$ and $\gm \ltimes \GG_{\rma}$ are special 
(and hence have essential dimension $0$).

\smallskip
(c) Let $\cM_{1,1} \arr \AA^1_{k}$ denote the map given by the
  $j$-invariant and let $\cX$ denote the pull-back of $\cM_{1,1}$ to
  the generic point $\Spec k(j)$ of $\AA^1$. Then $\cX$ is banded by
  $\mmu_2$ and is neutral by~\cite[Proposition 1.4 (c)]{Silverman}, and so
  $\ed_k \cX = \ed_{k(j)} \cX + 1 = \ed \cB_{k(j)}  \mmu_{2} + 1 = 2$. 

(d) is a special case of Theorem~\ref{t.hyperelliptic} below,
since $\cH_{2} = \cM_{2,0}$.
\end{proof}

Let $\cH_g$ denote the stack of hyperelliptic curves of genus $g>1$
over a field $k$ of characteristic $0$. This must be defined with some care; defining a family of hyperelliptic curves as a family $C \arr S$ in $\cM_{g,0}$ whose fiber are hyperelliptic curves will not yield an algebraic stack. There are two possibilities.

\begin{enumeratea}

\item One can define $\cH_{g}$ as the closed reduced substack 
of $\cM_{g}$ whose geometric points corresponds to hyperelliptic curves.

\item As in \cite{arsie-vistoli}, an object of $\cH_{g}$ can 
be defined as two morphisms of schemes $C \arr P \arr S$, 
where $P \arr S$ is a Brauer--Severi, 
$C \arr P$ is a flat finite finitely presented morphism 
of constant degree~2, and the composite $C \arr S$ 
is a smooth morphism whose fibers are connected curves of constant genus~$g$.

\end{enumeratea}

We adopt the second definition; $\cH_{g}$ is then a smooth algebraic 
stack of finite type over $k$ (this is shown in \cite{arsie-vistoli}). 
Furthermore,  there is a natural morphism $\cH_{g} \arr \cM_{g,0}$, 
which sends $C \arr P \arr S$ into the composite $C \arr S$. This morphism
is easily seen to be a closed embedding. Hence the two stacks defined 
above are in fact isomorphic.

\begin{theorem}
\label{t.hyperelliptic}
$ \ed \cH_g =
\begin{cases}
 2g   & \text{if $g \ge 3$ is odd,}\\
 2g+1 & \text{if $g \ge 2$ is even.}\\
\end{cases}
$
\end{theorem}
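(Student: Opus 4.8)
The plan is to combine the Genericity Theorem~\ref{thm:generic} with the explicit structure of hyperelliptic curves and the gerbe formula of Theorem~\ref{t.edGerbe}. Since $\cH_g$ is a smooth tame (characteristic $0$) \dm stack of dimension $2g-1$, Corollary~\ref{cor.generic1} lets us replace $\cH_g$ by any dense open substack, and the Genericity Theorem gives $\ed\cH_g = \ed_{k(\bH_g)}(\cH_g)_{k(\bH_g)} + (2g-1)$, where $\bH_g$ is the coarse moduli space. So everything reduces to computing the essential dimension of the \emph{generic gerbe} $\cG$ of $\cH_g$ over the function field $F = k(\bH_g)$. The automorphism group of a generic hyperelliptic curve of genus $g$ is $\mmu_2$ (generated by the hyperelliptic involution), so $\cG$ is a $\mmu_2$-gerbe over $F$, and Theorem~\ref{t.edGerbe}(b) tells us $\ed\cG = \ind[\cG]$, which is either $1$ (if the gerbe is neutral, equivalently the associated quaternion algebra splits) or $2$ (if not). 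Thus the whole theorem comes down to: \emph{is the generic hyperelliptic gerbe neutral?}

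First I would set up the generic gerbe concretely. Using the second (Arsie--Vistoli) definition, a hyperelliptic curve is a double cover $C \to P$ of a Brauer--Severi conic $P$ branched at $2g+2$ points. Over the function field $F$ of the moduli space, there is a universal such datum; the relevant Brauer class $[\cG] \in \H^2(F, \mmu_2) = {}_2\Br(F)$ should be expressible in terms of the conic $P$ and the discriminant/branch divisor. Concretely, when $g$ is odd the branch locus has degree $2g+2 \equiv 0 \pmod 4$ and one expects an obstruction-free situation (the involution lifts, the gerbe is neutral, $\ind = 1$, contributing $2g$); when $g$ is even, $2g+2 \equiv 2 \pmod 4$ and there is a genuine class-$2$ obstruction (the gerbe is non-neutral, $\ind = 2$, contributing $2g+1$). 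The parity of $2g+2$ modulo $4$ is exactly what controls whether the hyperelliptic involution, together with the $\PGL_2$-action on the conic, lifts to $\GL_2$ — this is the classical fact that an even branch divisor of the ``wrong'' parity forces a nontrivial central extension. So the plan is: identify $[\cG]$ with a specific quaternion-algebra class built from the universal conic and branch data, then compute its index by a parity count.

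The concrete computation I would run: present the generic hyperelliptic curve over a rational function field as $y^2 = f(x)$ with $f$ of degree $2g+2$ with indeterminate coefficients (this already shows $\ed\cH_g \le 2g+1$ crudely, and $\le 2g$ in the odd case after absorbing a coefficient by scaling $y$ — but the sharp statement needs the gerbe). More precisely, over the \emph{true} moduli field $F$ the conic $P$ need not be split, so one works with a form $N(v,w) = 0$ for a binary quadratic norm form and a section $C \to P$ given by a degree-$(g+1)$ analogue; the gerbe class is then the cup product or the Clifford-type invariant of this data. For the lower bound I would exhibit, over a suitable transcendental extension, a hyperelliptic curve whose associated gerbe has nontrivial class when $g$ is even: take the conic to be a division quaternion conic $\langle\langle a,b\rangle\rangle$ over $k(a,b)$ and a branch divisor that is ``odd'' in the relevant sense, so that the gerbe class equals $(a,b)$ and has index $2$; push this through the Genericity Theorem in reverse. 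For the odd case, I would show the gerbe is always neutral by explicitly constructing a section — i.e. a genuine universal family of genus-$g$ hyperelliptic curves over the coarse space, not just a family with the right fibers — using that degree $2g+2 \equiv 0 \pmod 4$ allows one to twist away the $\mmu_2$ ambiguity.

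The main obstacle I anticipate is the \emph{precise identification} of the generic gerbe's Brauer class and the clean proof that it is neutral exactly when $g$ is odd. The heuristic "parity of $(2g+2)/2 \bmod 2$" is easy to believe but the rigorous version requires carefully tracking how the hyperelliptic involution interacts with the Brauer--Severi conic $P$ and whether the whole automorphism $2$-group of the universal object splits — essentially a computation in $\H^2$ with the Arsie--Vistoli presentation, where the subtlety is that $P$ itself contributes a class in $\Br(F)$ and one must see whether it cancels or reinforces the $\mmu_2$-ambiguity from the double cover. I would handle this by reducing to an explicit versal object: write the branch locus as a binary form of degree $2g+2$ over $P \cong \proj$ of a split quaternion algebra for the upper bound, and over a division quaternion algebra for the lower bound, and in each case compute the obstruction class directly. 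Once that class is pinned down, Theorem~\ref{t.edGerbe}(b) and the Genericity Theorem assemble the answer mechanically.
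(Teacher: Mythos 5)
Your reduction---Genericity Theorem~\ref{thm:generic} plus Theorem~\ref{t.edGerbe}(b) applied to the generic $\mmu_2$-gerbe, with the odd-$g$ neutrality coming from the square root $\omega_{P}^{(g+1)/2}$ of $\cO_{P}(-\Delta)\simeq\omega_{P}^{g+1}$---is exactly the paper's framework, and that part of your sketch is sound. The gap is in the even-$g$ lower bound. The parity count does not by itself produce a nontrivial class: for $g$ even it only shows that the class of the generic gerbe in $\H^2(K,\mmu_2)$, $K=k(\bH_g)$, equals the image of the class of the generic conic $P$ under $\H^1(K,\PGL_2)\to\H^2(K,\mmu_2)$, so everything hinges on proving that $P$ has no $K$-point. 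Your proposed witness cannot establish this as stated: when $g$ is even, any hyperelliptic curve $C\xarr{\pi} P_0$ defined over a field $L$ forces $P_0$ to split, because the trace-zero summand of $\pi_*\cO_C$ is a line bundle of odd degree $-(g+1)$ on $P_0$, which yields an $L$-point of the conic; moreover an actual curve over $L$ is a section of the gerbe at its moduli point, so it trivializes the very class you want to be nonzero. Hence ``a hyperelliptic curve over $k(a,b)$ whose conic is a division quaternion conic'' does not exist for even $g$.

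What you could write down instead is a point of the stack $\cD_g$ of pairs (conic, \'etale divisor of degree $2g+2$) with nonsplit conic---branch data that does \emph{not} lift to a curve. But you would still have to transport the nontriviality of the obstruction from that special point to the generic point of $\bH_g$, and ``pushing through the Genericity Theorem in reverse'' is not a mechanism: Theorem~\ref{thm:generic} relates $\ed\cH_g$ to the essential dimension of the generic gerbe, it does not propagate Brauer classes from special points of the moduli space to its generic point. You would need an extra argument, e.g.\ that over the open locus $U\subseteq\bH_g$ where $\cH_g\to\bH_g$ is a $\mmu_2$-gerbe the class lives in $\H^2(U,\mmu_2)$ and its index at the generic point is at least its index at your special point (together with a check that your special point actually lies in $U$). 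The paper instead proves $P(K)=\emptyset$ directly: take a pointless conic $C$ over some $L$, pass to the purely transcendental extension $F=L(V)$ with $V=\H^0(C,\omega_{C/L}^{-(g+1)})$, and use the zero divisor of the tautological section to get a \emph{dominant} morphism $\Spec F\to\cD_g$; then the generic pair pulls back to $(C_F,\Delta)$, and $C_F(F)=\emptyset$ forces $P(K)=\emptyset$, hence index~$2$ and $\ed$ of the generic gerbe equal to $2$ by Theorem~\ref{t.edGerbe}(b). Some argument of this kind is what your even case is missing; without it the claimed value $2g+1$ is only an upper bound.
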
 

\begin{proof} Denote by $\bH_{g}$ the moduli space of
$\cH_{g}$; the dimension of $\cH_{g}$ is $2g-1$. Let $K$ be the field
of rational functions on $\bH_{g}$, and denote by $(\cH_{g})_{K}
\eqdef \spec K \times_{\bH_{g}} \cH_{g}$ the generic gerbe of
$\cH_{g}$. From Theorem~\ref{thm:generic} we have
   \[
   \ed \cH_{g} = 2g - 1 + \ed_{K}(\cH_{g})_{K},
   \]
so we need to show that
$\ed_{K}(\cH_{g})_{K}$ is $1$ if $g$ is odd, $2$ if $g$ is
even.  For this we need some standard facts about stacks of hyperelliptic
curves, which we will now recall.

Let $\cD_{g}$ be the stack over $k$ whose objects over a $k$-scheme $S$
are pairs $(P\to S, \Delta)$, where $P \to S$ is a conic bundle (that
is, a Brauer--Severi scheme of relative dimension~$1$), and
$\Delta\subseteq P$ is a Cartier divisor that is étale of
degree~$2g+2$ over $S$. Let $C \xarr{\pi} P \arr S$ be an object of $\cH_{g}$; denote by $\Delta \subseteq P$ the ramification locus of $\pi$. Sending $C \xarr{\pi} P \arr S$ to $(P \arr S,
\Delta)$ gives a morphism $\cH_{g} \arr \cD_{g}$. Recall the usual
description of ramified double covers: if we split $\pi_{*}\cO_{C}$ as
$\cO_{P} \oplus L$, where $L$ is the part of trace~$0$, then
multiplication yields an isomorphism $L^{\otimes 2} \simeq
\cO_{P}(-\Delta)$. Conversely, given an object $(P \to S, \Delta)$ of
$\cD_{g}(S)$ and a line bundle $L$ on $P$, with an isomorphism
$L^{\otimes 2} \simeq \cO_{P}(-\Delta)$, the direct sum $\cO_{P}
\oplus L$ has an algebra structure, whose relative spectrum is a
smooth curve $C\to S$ with a flat map $C \arr P$ of degree~$2$.

The morphism $\cH_{g} \arr \bH_{g}$ factors through $\cD_{g}$, and the
morphism $\cD_{g} \arr \bH_{g}$ is an isomorphism over the non-empty
locus of divisors on a curve of genus~$0$ with no non-trivial
automorphisms (this is non-empty because $g \geq 2$, hence $2g+2 \geq
5$). Denote by $(P \arr \spec K, \Delta)$ the object of $\cD_{g}(\spec K)$
corresponding to the generic point $\spec K \arr \bH_{g}$. It is well-known 
that $P(K) = \emptyset$; we give a proof for lack of a suitable reference.

Let $C$ be a conic without rational points defined over some extension $L$ of 
$k$. Let $V$ be the $L$-vector space $\H^{0}(C, \omega_{C/L}^{-(g+1)})$;
denote the function field of $V$ by $F = L(V)$.
Then there is a tautological section $\sigma$ of $\H^{0}
(C_{F}, \omega_{C_{F}/F}^{-(g+1)}) = \H^{0}(C, \omega_{C/L}^{-(g+1)}) 
\otimes_{L} F$. Note that $C_{F}(F) = \emptyset$, because the extension $L 
\subseteq F$ is purely transcendental. The zero scheme of $\sigma$ is a divisor 
on $C_{F}$ that is étale over $\spec F$, and defines a morphism $C_{F} \arr 
\cD_{g}$. This morphism is clearly dominant: so $K \subseteq F$, and $C_{F} = P 
\times_{\spec L} \spec F$. Since $C_{F}(F) = \emptyset$ we have $P(K) = 
\emptyset$, as claimed.

By the description
above, the gerbe $(\cH_{g})_{K}$ is the stack of square roots of
$\cO_{P}(-\Delta)$, which is banded by $\mmu_{2}$. When $g$ is odd then
there exists a line bundle of degree $g+1$ on $P$, whose square is
isomorphic to $\cO_{P}(-\Delta)$; this gives a section of
$(\cH_{g})_{K}$, which is therefore isomorphic to $\cB_{K}\mmu_{2}$,
whose essential dimension over $K$ is $1$. If $g$ is even then
such a section does not exist, and the stack is isomorphic to the
stack of square roots of the relative dualizing sheaf $\omega_{P/K}$ (since $\cO_{P/K}(-\Delta) \simeq \omega_{P/K}^{g+1}$, and $g+1$ is odd), whose class in $\H^{2}(K,
\mmu_{2})$ represents the image in $\H^{2}(K, \mmu_{2})$ of the class
$[P]$ in $\H^{1}(K, \PGL_{2})$ under the non-abelian boundary map
$\H^{1}(K, \PGL_{2}) \arr \H^{2}(K, \mmu_{2})$. According to
Theorem~\ref{t.edGerbe} its essential dimension is the index of $[P]$,
which equals $2$.
\end{proof}

The results above apply to more than stable curves. Assume that 
we are in the stable range $2g - 2 + n > 0$.
Denote by $\fM_{g,n}$ the stack of all reduced $n$-pointed 
\lci curves of genus $g$. This is the algebraic stack 
over $\spec k$ whose objects over a $k$-scheme $T$ are 
finitely presented proper flat morphisms 
$C \arr T$, where $C$ is an algebraic space, whose geometric 
fibers are connected reduced \lci curves of genus $g$, 
together with $n$ sections $T \arr C$ whose images are 
contained in the smooth locus of $C \arr T$. We do not require 
the sections to be disjoint.

The stack $\fM_{g,n}$ contains $\cM_{g,n}$ as an open substack. 
By standard results in deformation theory, every reduced 
\lci curve is unobstructed, and is a limit of smooth curves.
Furthermore there is no obstruction to extending the sections, 
since these map into the smooth locus. Therefore 
$\fM_{g,n}$ is smooth and connected, and $\cM_{g,n}$ is 
dense in $\fM_{g,n}$. However, the stack $\fM_{g,n}$ is very 
large (it is certainly not of finite type), and in fact it is 
very easy to see that its essential dimension is infinite. However, 
consider the open substack $\fM_{g,n}^{\mathrm{fin}}$ consisting 
of objects whose automorphism group is finite. Then 
$\fM^{\mathrm{fin}}_{g,n}$ is a \dm stack, and 
Theorem~\ref{thm:generic} applies to it. Thus we 
get the following strengthened form of Theorem~\ref{thm.curves}
(under the assumption that $2g-2+n > 0$).

\begin{theorem} \label{thm.finite}
If $2g-2+n > 0$ and the characteristic of $k$ is $0$, then
\[ \ed\fM^{\mathrm{fin}}_{g,n} = 
   \begin{cases} 
   2         & \text{if }(g,n)=(1,1), \\
   5         & \text{if }(g,n)=(2,0),\\
   3g-3 + n  & \text{otherwise}.
\end{cases}
\]
\end{theorem}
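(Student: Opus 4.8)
The strategy is to apply the Genericity Theorem~\ref{thm:generic} to the tame (characteristic $0$) smooth connected \dm stack $\fM^{\mathrm{fin}}_{g,n}$, exactly as in the proof of Theorem~\ref{thm.curves}. First I would observe that $\dim \fM^{\mathrm{fin}}_{g,n} = 3g-3+n$: the stack $\fM_{g,n}$ is smooth with $\cM_{g,n}$ dense, so the dimension of $\fM^{\mathrm{fin}}_{g,n}$ equals $\dim \cM_{g,n} = 3g-3+n$, the moduli space dimension of any dense open substack with finite inertia (e.g. $\cM_{g,n}$ itself after removing the locus of curves with infinite automorphism group, which in the stable range $2g-2+n>0$ is all of $\cM_{g,n}$ when $(g,n)\neq(1,0)$). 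By Theorem~\ref{thm:generic},
\[
\ed \fM^{\mathrm{fin}}_{g,n} = 3g-3+n + \ed_{k(\bX)}\bigl(\fM^{\mathrm{fin}}_{g,n}\bigr)_{k(\bX)},
\]
where $\bX$ is the moduli space and the generic gerbe is the gerbe of the generic \lci curve. The key point is then that $\cM_{g,n}$ is \emph{dense} in $\fM^{\mathrm{fin}}_{g,n}$, so the generic object — hence the generic gerbe — is the same for both stacks; in particular $\bigl(\fM^{\mathrm{fin}}_{g,n}\bigr)_{k(\bX)} \simeq (\cM_{g,n})_{k(\bX)}$.

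Next I would invoke Corollary~\ref{cor.generic1}: since $\cM_{g,n}$ is a dense open substack of the smooth tame \dm stack $\fM^{\mathrm{fin}}_{g,n}$ (one must check $\cM_{g,n}$ actually lands in the finite-automorphism locus, which holds in the stable range when $(g,n)\neq(1,0)$), we get
\[
\ed \fM^{\mathrm{fin}}_{g,n} = \ed \cM_{g,n}.
\]
Then the values of $\ed\cM_{g,n}$ in the stable range, with $(g,n)\neq(1,0)$, are precisely those tabulated in Theorem~\ref{thm.curves}: $2$ when $(g,n)=(1,1)$, $5$ when $(g,n)=(2,0)$, and $3g-3+n$ otherwise (note that in the stable range the degenerate cases $(0,0),(0,1),(0,2)$ with small essential dimension do not occur, since $2g-2+n>0$ forces $n\geq 3$ when $g=0$, where the generic curve is automorphism-free and $\ed=n-3$). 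This gives exactly the claimed table.

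The only genuine content beyond quoting Theorem~\ref{thm.curves} and the genericity results is the verification that $\fM^{\mathrm{fin}}_{g,n}$ genuinely satisfies the hypotheses of Theorem~\ref{thm:generic} — smoothness and connectedness of $\fM_{g,n}$ (already asserted via deformation theory: reduced \lci curves are unobstructed and smoothable, and sections into the smooth locus extend without obstruction), and that passing to the open substack $\fM^{\mathrm{fin}}_{g,n}$ preserves these and produces a \dm stack (the finite-automorphism condition is exactly what cuts out the \dm locus). I expect this bookkeeping to be routine; the main conceptual obstacle is simply making sure the density of $\cM_{g,n}$ in $\fM^{\mathrm{fin}}_{g,n}$ is correctly set up so that Corollary~\ref{cor.generic1} applies, which is where the deformation-theoretic input (smoothability of \lci curves, extendability of sections) does its work.
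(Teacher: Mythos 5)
Your proposal is correct and follows essentially the same route as the paper: the paper's justification is exactly that $\fM^{\mathrm{fin}}_{g,n}$ is a smooth, connected, tame \dm stack containing $\cM_{g,n}$ as a dense open substack (via the deformation-theoretic facts you cite), so the genericity results (Theorem~\ref{thm:generic}, Corollary~\ref{cor.generic1}) give $\ed\fM^{\mathrm{fin}}_{g,n}=\ed\cM_{g,n}$, and the values are read off from Theorem~\ref{thm.curves} in the stable range. Your extra remarks (matching generic gerbes, checking $\cM_{g,n}$ lies in the finite-automorphism locus, excluding the degenerate $(g,n)$) are consistent with, and no more than, what the paper itself asserts.
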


It is not hard to show that $\fM^{\mathrm{fin}}_{g,n}$ does not have 
finite inertia.

\section{Tate curves and the essential dimension of $\cM_{1,0}$}
\label{s.Tate}

In this section we will finish the
proof of Theorem~\ref{thm.curves} by showing
that $\ed\cM_{1,0}=+\infty$.  

We remark that the moduli stack $\cM_{1,0}$ of genus $1$ curves
should not be confused with the moduli stack $\cM_{1,1}$ 
of elliptic curves.
The objects of $\cM_{1,0}$ are torsors for elliptic curves, 
where as the objects of $\cM_{1, 1}$ are elliptic curves 
themselves. The stack $\cM_{1,1}$ is \dm and, as we saw 
in the last section, its essential dimension is $2$. 
The stack $\cM_{1, 0}$ is not \dm, and we will now
show that its essential dimension is $\infty$.

 Let $R$ be a complete discrete valuation ring with
  function field $K$ and uniformizing parameter $q$.  For simplicity,
  we will assume that $\chr K=0$.  Let $E=E_q/K$ denote
  the Tate curve over $K$~\cite[\S 4]{Silverman}.  This is an elliptic
  curve over $K$ with the property that, for every finite field
  extension $L/K$, $E(L)\cong L^*/q^{\ZZ}$.  It follows that the 
  kernel  $E[n]$ of multiplication by an integer $n>0$ fits
  canonically into a short exact sequence
\begin{equation}
\label{t.ses}
0 \arr \mmu_n \arr E[n] \arr \ZZ/n \arr 0.
\end{equation}
Let $\partial\colon\H^0(K,\ZZ/n) \arr \H^1(K,\mmu_n)$ denote the connecting
homomorphism.  Then it is well-known (and easy to see) that
$\partial(1)=q\in \H^1(K,\mmu_n)\cong K^*/(K^*)^n$.  
%
%

\begin{lemma}
\label{l.TateTorsion}
  Let $E=E_q$ be a Tate curve as above and let $l$ be a
  prime integer not equal to $\chr R/q$.  Then, for any integer $n>0$, 
  \begin{equation*}
    \ed E[l^n] = l^n.
  \end{equation*}
\end{lemma}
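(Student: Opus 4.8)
The plan is to prove the two inequalities $\ed E[l^{n}]\le l^{n}$ and $\ed E[l^{n}]\ge l^{n}$ separately, working throughout over the base field $K$; both arguments rest on the exact sequence~\eqref{t.ses} (with $l^{n}$ in place of $n$), which exhibits $E[l^{n}]$ as a central extension of $\ZZ/l^{n}$ by $\mmu_{l^{n}}$.

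\emph{Upper bound.} The idea is to produce a faithful $l^{n}$-dimensional representation of $E[l^{n}]$ over $K$ and then apply the standard estimate $\ed G\le\dim V-\dim G$, valid whenever a group scheme $G$ acts generically freely and linearly on a vector space $V$ (see~\cite{reichstein} or~\cite[Proposition 4.11]{bf1}). Since $\chr K=0$, the group scheme $E[l^{n}]$ is finite and \'etale, and by~\eqref{t.ses} the subgroup scheme $\mmu_{l^{n}}$ has index $l^{n}$ in it. Let $\chi\colon\mmu_{l^{n}}\into\gm$ be the tautological character, and set $V\eqdef\Ind_{\mmu_{l^{n}}}^{E[l^{n}]}\chi$, the induced representation, which has dimension $[E[l^{n}]:\mmu_{l^{n}}]=l^{n}$. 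To see that $V$ is faithful I would base change to $\overline K$: by the Tate parametrization $E[l^{n}](\overline K)=\langle\zeta_{l^{n}}\rangle\times\langle q^{1/l^{n}}\rangle$ with $\mmu_{l^{n}}(\overline K)=\langle\zeta_{l^{n}}\rangle$, and a direct computation of the character of $V$ identifies the kernel of $E[l^{n}]\to\GL(V)$ with $\{g\in\mmu_{l^{n}}:\chi(g)=1\}=1$. A faithful linear action of a finite \'etale group scheme is automatically generically free, so $\ed E[l^{n}]\le\dim V-\dim E[l^{n}]=l^{n}$.

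\emph{Lower bound.} Here the plan is to apply Corollary~\ref{cor.lower-bound2} to the central extension~\eqref{t.ses}, namely $1\arr\mmu_{l^{n}}\arr E[l^{n}]\arr\ZZ/l^{n}\arr1$ ($\mmu_{l^{n}}$ is indeed central, $E[l^{n}]$ being commutative). This is of type~(b), so the prime-power hypothesis is automatic and $\dim E[l^{n}]=0$; it therefore suffices to exhibit a field extension $K'/K$ and a class $t\in\H^{1}(K',\ZZ/l^{n})$ with $\ind\bigl(\partial_{K'}(t)\bigr)\ge l^{n}$, where $\partial_{K'}\colon\H^{1}(K',\ZZ/l^{n})\arr\H^{2}(K',\mmu_{l^{n}})$ is the connecting map of~\eqref{t.ses}. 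The key point is that, since the degree-$0$ connecting map of~\eqref{t.ses} sends $1$ to $q$, the standard description of connecting homomorphisms via the extension class identifies $\partial_{K'}$ with cup product against the class $(q)\in\H^{1}(K',\mmu_{l^{n}})$ (for the Tate curve this is classical; cf.~\cite[\S 4]{Silverman}). Because $l\neq\chr(R/q)$, the group scheme $\mmu_{l^{n}}$ is \'etale over $R$ (as $l$ is invertible in $R/q$), so $K(\zeta_{l^{n}})/K$ is unramified; hence $q$ is still a uniformizer of $K(\zeta_{l^{n}})$ and has order exactly $l^{n}$ in $K(\zeta_{l^{n}})^{*}/\bigl(K(\zeta_{l^{n}})^{*}\bigr)^{l^{n}}$. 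I would then take $K'=K(\zeta_{l^{n}})\dr{s}$ for an indeterminate $s$, so that $K\subseteq K'$, and let $t$ be the image of the Kummer class $(s)$ under the isomorphism $\H^{1}(K',\mmu_{l^{n}})\cong\H^{1}(K',\ZZ/l^{n})$ determined by $\zeta_{l^{n}}\in K'$. Then $\partial_{K'}(t)=(s)\cup(q)$ is the symbol algebra $(s,q)_{l^{n}}$; since $s$ is a uniformizer of $K'$ and the residue of $q$ generates a degree-$l^{n}$ Kummer extension of the residue field $K(\zeta_{l^{n}})$, this algebra is a division algebra of degree $l^{n}$. Thus $\ind(E[l^{n}],\mmu_{l^{n}})\ge l^{n}$, and Corollary~\ref{cor.lower-bound2} gives $\ed_{K}E[l^{n}]\ge l^{n}$.

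I expect the main obstacle to be the lower-bound construction: one must correctly identify $\partial_{K'}$ with cup product against $q$, keeping track of the canonical identification $\ZZ/l^{n}\otimes\mmu_{l^{n}}\cong\mmu_{l^{n}}$, and then produce a field extension of $K$ on which this cup product attains index $l^{n}$. The choice $K(\zeta_{l^{n}})\dr{s}$ is designed to work uniformly, but in the mixed-characteristic case one should double-check that $K$ genuinely embeds into it and that the residue-field computation is as stated. The faithfulness verification for $V$ in the upper bound is routine and is cleanest over $\overline K$ via the Tate parametrization.
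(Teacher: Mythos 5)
Your proposal is correct and follows essentially the same route as the paper: the identical induced representation $\Ind_{\mmu_{l^n}}^{E[l^n]}\chi$ gives the upper bound, and the lower bound comes from Corollary~\ref{cor.lower-bound2} applied to the sequence~\eqref{t.ses}, with the boundary map identified as cup product with $q$. The only (harmless) difference is in how the index $l^n$ is certified: you use valuation theory for the symbol $(s)\cup(q)$ over the Laurent series field $K(\zeta_{l^n})\dr{s}$, whereas the paper works over the rational function field $K(\zeta_{l^n})(t)$ and deduces exponent (hence index) $l^n$ from the injectivity of $\alpha\mapsto\alpha\cup(t)$ via cohomological purity.
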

\begin{proof}
First observe that $E[l^n]$ admits an 
$l^n$-dimensional generically free representation 
$V=\Ind_{\mmu_{l^n}}^{E[l^n]} \chi$, over $K$,
where $\chi\colon\mmu_{l^n} \arr \GG_m$ is the tautological character.
Thus, 
\[ \ed \cB E[l^n] \le \dim(V) = l^n \, ; \]
see~\cite[Theorem 3.1]{bur} or~\cite[Proposition 4.11]{bf1}.

It remains to show that 
\begin{equation} \label{e.Tate}
\ed E[l^n]\geq l^n \, .
\end{equation}
Let $R'\eqdef R[1^{1/l^n}]$ with fraction field $K'=K[1^{1/l^n}]$.  Since
$l$ is prime to the residue characteristic, $R'$ is a complete
discrete valuation ring, and the Tate curve $E_q/K'$ is the pullback
to $K'$ of $E_q/K$.  Since $\ed (E_q/K')\leq \ed (E_q/K)$, it suffices
to prove the lemma with $K'$ replacing $K$.  In other words, it
suffices to prove the inequality~\eqref{e.Tate}
under the assumption that $K$ contains the $l^n$-th roots of unity.

In that case, we can pick a primitive $l^n$-th root of unity $\zeta$
and write $\mmu_{l^n}=\ZZ/l^n$.   Let $L=K(t)$ and consider the 
class $(t)\in \H^1(L,\mmu_{l^n})=L^*/(L^*)^n$.  
It is not difficult to see that 
$$
\partial(t)=q\cup (t).
$$
Since the map $\alpha\mapsto \alpha\cup (t)$ is injective by
cohomological purity, the exponent of $q\cup (t)$ is $l^n$.
Therefore $\ind(q\cup (t))=l^n$.  Then, since $\dim\ZZ/l^n=0$,
Corollary~\ref{cor.lower-bound2}, applied to 
the sequence~\eqref{t.ses}
implies that $\ed \cB E[l^n]\geq l^n$, as claimed. 
\end{proof}

\begin{theorem}
\label{t.Tate} Let $E=E_q$ denote the Tate curve over a field $K$ as
above.  Then  $\ed_K E=+\infty$.
\end{theorem}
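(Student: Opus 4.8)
The plan is to deduce the theorem from Lemma~\ref{l.TateTorsion} by comparing the classifying stack of $E$ with those of its finite $l^{n}$-torsion subgroup schemes and letting $n\to\infty$. Fix a prime $l$ different from the residue characteristic of $R$; such an $l$ exists (any prime works if the residue characteristic is $0$), and is exactly what Lemma~\ref{l.TateTorsion} requires. For each $n\ge 1$, the inclusion $E[l^{n}]\hookrightarrow E$ of the subgroup scheme of $l^{n}$-torsion points induces a morphism of algebraic stacks over $K$
\[
\pi_{n}\colon \cB_{K}E[l^{n}]\arr \cB_{K}E,
\]
sending an $E[l^{n}]$-torsor $T$ to the pushed-forward $E$-torsor $T\times^{E[l^{n}]}E$.

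The key observation is that $\pi_{n}$ is representable, of finite type, and of fiber dimension $\le 1$. Indeed, for any scheme $T$ with a morphism $T\arr \cB_{K}E$ classifying an $E$-torsor $P$, the fiber product $\cB_{K}E[l^{n}]\times_{\cB_{K}E}T$ is the scheme $P/E[l^{n}]$ of reductions of structure group of $P$ to $E[l^{n}]$ --- this is the standard fact that $\cB N\arr \cB G$, for $N$ a closed subgroup scheme of $G$, is representable with fibers $G/N$. Since multiplication by $l^{n}$ is an isogeny $E\arr E$ with kernel $E[l^{n}]$, we have $E/E[l^{n}]\cong E$, so $P/E[l^{n}]$ is a form of the elliptic curve $E$, hence an algebraic space proper and smooth of relative dimension $1$ over $T$. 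Applying Theorem~\ref{thm.fiber-dimension}(b) to $\pi_{n}$ therefore gives
\[
\ed_{K}\cB_{K}E[l^{n}]\le \ed_{K}\cB_{K}E + 1,
\]
and since $\ed_{K}\cB_{K}E[l^{n}] = \ed E[l^{n}] = l^{n}$ by Lemma~\ref{l.TateTorsion}, we conclude $\ed_{K}E = \ed_{K}\cB_{K}E\ge l^{n}-1$ for every $n$, whence $\ed_{K}E = +\infty$.

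The only step that takes any work is identifying the fibers of $\pi_{n}$ and verifying the hypotheses of Theorem~\ref{thm.fiber-dimension}; once the fiber product over $T$ is recognized as a twist of $E$ via the isogeny $E\xrightarrow{l^{n}}E$, the relative dimension is $\dim E-\dim E[l^{n}]=1$ and the rest is formal. The passage to the limit is not an obstacle, since the inequality $\ed_{K}E\ge l^{n}-1$ already holds for one fixed admissible prime $l$ and all $n$.
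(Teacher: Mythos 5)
Your argument is correct and is essentially the paper's own proof: both deduce the result by applying Theorem~\ref{thm.fiber-dimension} to the representable morphism $\cB_K E[l^n]\arr\cB_K E$ of fiber dimension $1$ and invoking Lemma~\ref{l.TateTorsion} to get $\ed_K E\ge l^n-1$ for all $n$. Your identification of the fibers as forms of $E$ (via $E/E[l^n]\cong E$) just makes explicit a step the paper asserts without comment.
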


\begin{proof}
  For each prime power $l^n$, the morphism $\cB E[l^n] \arr \cB E$ is
  representable of fiber dimension $1$.  By Theorem~\ref{thm.fiber-dimension}
\[ \ed E \geq \ed \cB E[l^n] =l^n-1 \]
for every $n \ge 1$.
\end{proof}

\begin{remark} It is shown in~\cite{bs} that if
$A$ is an abelian variety over $k$ and $k$ is 
a number field then $\ed_k A = +\infty$.
On the other hand, if $k = \mathbb{C}$ is the field of complex 
numbers then $\ed_{\mathbb{C}}(A)= 2 \dim(A)$; see ~\cite{brosnan}. 
\end{remark}

Now we can complete the proof of Theorem~\ref{thm.curves}.

\begin{theorem}
  \label{t.m1}  Let $k$ be a field. Then $\ed_{k}\cM_{1,0}=+\infty$.   
\end{theorem}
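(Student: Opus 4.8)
The plan is to deduce this from the fact that a Tate curve has infinite essential dimension (Theorem~\ref{t.Tate}), by comparing the classifying stack of a Tate curve with $\cM_{1,0}$. First I would reduce to the case in which $k$ itself is the fraction field of a complete discrete valuation ring: put $K = k\dr{q}$, the fraction field of the complete discrete valuation ring $k\ds{q}$ with uniformizing parameter $q$ (recall $\chr k = 0$, so $\chr K = 0$). The base change of $\cM_{1,0}$ to $K$ is again the stack of genus~$1$ curves, now over $K$-schemes, which I still denote $\cM_{1,0}$; by Proposition~\ref{p.extensions}, $\ed_{k}\cM_{1,0} \ge \ed_{K}\cM_{1,0}$, so it suffices to prove $\ed_{K}\cM_{1,0} = +\infty$. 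Letting $E = E_{q}$ be the Tate curve over $K$, Theorem~\ref{t.Tate} gives $\ed_{K}\cB_{K}E = \ed_{K}E = +\infty$, so it is enough to produce a representable $K$-morphism $\cB_{K}E \arr \cM_{1,0}$ of fiber dimension~$0$ and invoke Theorem~\ref{thm.fiber-dimension}(b).

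The morphism $\Phi\colon \cB_{K}E \arr \cM_{1,0}$ sends an $E$-torsor $P \arr T$ to the underlying curve $P \arr T$; this is smooth and proper with geometrically connected genus~$1$ fibers, since fppf-locally on $T$ it becomes $E_{T}$ and these properties descend. The key step is to show that $\Phi$ is representable and of fiber dimension~$0$. Fix an object of $\cM_{1,0}(T)$, i.e.\ a genus~$1$ curve $C \arr T$, and recall that $C$ is canonically a torsor under its relative Jacobian $\pic^{0}_{C/T}$, an elliptic curve over $T$. A lift of $C$ along $\Phi$ is the same as a simply transitive action of $E_{T}$ on $C$ over $T$, equivalently --- via the canonical $\pic^{0}_{C/T}$-torsor structure --- an isomorphism of $T$-group schemes $E_{T} \larrowsim \pic^{0}_{C/T}$; one checks moreover that such a lift has no nontrivial automorphisms, so $\Phi$ is representable. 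Thus $\cB_{K}E \times_{\cM_{1,0}} T$ is represented by the scheme $\underisom_{T}\bigl(E_{T}, \pic^{0}_{C/T}\bigr)$ of $T$-group-scheme isomorphisms, which is empty or a torsor under the finite $T$-group scheme $\underaut_{T}(E_{T})$, hence finite over $T$. In particular $\Phi$ is locally of finite type and of fiber dimension~$0$.

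Applying Theorem~\ref{thm.fiber-dimension}(b) to $\Phi$ then gives $\ed_{K}\cB_{K}E \le \ed_{K}\cM_{1,0}$, so $\ed_{K}\cM_{1,0} = +\infty$, and hence $\ed_{k}\cM_{1,0} = +\infty$. The main obstacle --- mild, but the one place where real input is needed --- is the analysis of the fibers of $\Phi$ in the previous paragraph: it rests on the standard facts that a family of smooth genus~$1$ curves is canonically a torsor under its relative Jacobian, and that the automorphism group scheme of an elliptic curve is finite. Everything else is formal, given Theorem~\ref{t.Tate}, Proposition~\ref{p.extensions}, and the fiber-dimension theorem.
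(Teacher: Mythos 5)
Your argument is correct and is essentially the paper's own proof: the same reduction to $K=k\dr{q}$ via Proposition~\ref{p.extensions}, the same use of the Tate curve together with Theorem~\ref{t.Tate}, and the same application of Theorem~\ref{thm.fiber-dimension}(b) to a representable morphism $\cB_{K}E\to\cM_{1,0}$ of fiber dimension~$0$; your direct identification of the fibers with $\underisom_T\bigl(E_T,\pic^0_{C/T}\bigr)$ is exactly the content of the paper's Cartesian square over the Jacobian morphism $\cM_{1,0}\to\cM_{1,1}$. The only cosmetic point is your parenthetical $\chr k=0$: the statement allows any field, and this hypothesis is not needed (in Lemma~\ref{l.TateTorsion} one simply takes $l$ prime to the residue characteristic), the paper's Tate-curve section making the same simplifying assumption.
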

\begin{proof}
Set $F=k\dr{t}$.  By Proposition~\ref{p.extensions} 
$\ed_F (\cM_{1,0}\otimes_kF) \le \ed_k \cM_{1,0}$, so
it suffices to show that $\ed_{F} (\cM_{1,0}\otimes_k F)$ is infinite. 
  Consider the morphism $\cM_{1,0} \arr \cM_{1,1}$ which sends a genus $1$
  curve to its Jacobian.  Let $E$ denote the Tate
  elliptic curve over $F$, which is classified by a morphism $\Spec
  F \arr \cM_{1,1}$.  We have a Cartesian diagram:
\[ \xymatrix{
    \cB_{k}E\ar[r]\ar[d] & \cM_{1,0}\otimes_k F\ar[d]\\
    \Spec F\ar[r]  & \cM_{1,1}\otimes_k F.
}
  \]
It follows that the morphism $\cB_{k}E \arr \cM_{1,0}$ is representable, with 
fibers of dimension $\leq 0$. Applying
Theorem~\ref{thm.fiber-dimension} once again, we see that
 \[ +\infty=\ed \cB_{F}E\leq \ed_F (\cM_{1,0}\otimes_kF) \le 
\ed_k \cM_{1,0} \, ,
\]
as desired.
\end{proof}

%
 
 
 
 
 
\let\oldmarginpar\marginpar 
\renewcommand\marginpar[1]{\-\oldmarginpar{\raggedright\small\sf #1}} 
 
 
  
 
 
\newcommand{\nc}{\newcommand} 
 
\nc{\rnc}{\renewcommand} 
 
\nc{\bs}{\backslash} 
\nc{\te}{\otimes} 
\nc{\lf}{\lfloor} 
\nc{\rf}{\rfloor} 
\nc{\lc}{\lceil}  
\nc{\rc}{\rceil} 
\nc{\lr}{\longrightarrow} 
\nc{\sr}{\stackrel} 
\nc{\dar}{\dashrightarrow} 
\nc{\thra}{\twoheadrightarrow} 
\nc{\from}{\leftarrow}

\nc{\mc}{\mathcal} 
\nc{\mb}{\mathbb} 
\nc{\mf}{\mathbf} 
\nc{\mr}{\mathrm} 
 
\rnc{\bP}{\mathbb{P}} 
\rnc{\P}{\mathbb{P}} 
\nc{\Q}{\mathbb{Q}} 
\nc{\Z}{\mathbb{Z}} 
\nc{\C}{\mathbb{C}} 
\nc{\R}{\mathbb{R}} 
\nc{\A}{\mathbb{A}} 
\nc{\V}{\mathbb{V}} 
\nc{\W}{\mathbb{W}} 
\nc{\N}{\mathbb{N}} 
\nc{\G}{\mathbb{G}} 
\nc{\F}{\mathbb{F}} 
 
\nc{\aff}{{\A}^1} 
\nc{\naive}{\!\sim_n} 
\rnc{\Spec}{\mr{Spec}} 
 
\nc{\omx}{\omega_X} 
 
\nc{\ep}{\epsilon} 
\nc{\ve}{\varepsilon} 
 
\nc{\wt}{\widetilde} 
\nc{\wh}{\widehat} 
\nc{\ol}{\overline} 
\rnc{\sl}{\shoveleft} 
 
 
\nc{\Pic}{\operatorname{Pic}} 
\rnc{\Br}{\operatorname{Br}} 
 
\rnc{\chr}{\operatorname{char}}

\numberwithin{equation}{section} 
 

 
\section{Appendix: Essential dimension of moduli of abelian varieties.
By Najmuddin Fakhruddin} 
 
In Theorem~\ref{thm.curves}, Brosnan, Reichstein and Vistoli compute 
the essential dimension of various moduli stacks of curves as an 
application of their ``genericity theorem'' for the essential 
dimension of smooth and tame Deligne--Mumford stacks. Here we use this 
theorem to compute the essential dimension of some stacks of abelian 
varieties. Our main result is: 
 
\begin{thm} 
\label{thm:mainab} 
Let $g \geq 1$ be an integer, $\mc{A}_g$ the stack of $g$-dimensional 
principally polarised abelian varieties over a field $K$ and 
$\mc{B}_g$ the stack of all $g$-dimensional abelian varieties over 
$K$. 
\begin{enumerate} 
\item If $\chr(K) = 0$ then $\ed \mc{A}_g = g(g+1)/2 + 2^a = \ed 
  \mc{B}_g$, where $2^a$ is the largest power of $2$ dividing $g$. 
\item If $\chr(K) = p > 0$ and $p \nmid |Sp_{2g}(\Z/\ell\Z)|$ for some 
  prime $\ell > 2$ then $\ed \mc{A}_g = g(g+1)/2 + 2^a$ with $a$ as 
  above. 
\end{enumerate} 
\end{thm}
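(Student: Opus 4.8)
The plan is to apply the Genericity Theorem~\ref{thm:generic} to $\mathcal{A}_g$ and thereby reduce the whole statement to computing the index of a single $2$-torsion Brauer class. First I would record that $\mathcal{A}_g$ is a smooth integral Deligne--Mumford stack of dimension $g(g+1)/2$ which is tame: in characteristic $0$ this is automatic, and in characteristic $p$ the hypothesis $p\nmid|\mathrm{Sp}_{2g}(\Z/\ell\Z)|$ for some $\ell>2$ forces it, since then $\ell\ne p$ and $p\ne2$, and for $\ell\ne p$ the action on $\ell$-torsion together with Serre's lemma embeds the automorphism group of every principally polarised abelian variety into $\mathrm{Sp}_{2g}(\Z/\ell\Z)$. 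Because $-1$ is always an automorphism fixing the polarisation while the generic principally polarised abelian variety has no other automorphisms, the automorphism group equals $\{\pm1\}$ on a dense open substack. Working over a perfect subfield where needed (reducing via Proposition~\ref{p.extensions}), Theorem~\ref{thm:generic} gives $\ed\mathcal{A}_g=g(g+1)/2+\ed_K\cG$, where $K=k(\bA_g)$ is the function field of the coarse space and $\cG=(\mathcal{A}_g)_K$ is the generic gerbe, which is banded by $\mmu_2$; by Theorem~\ref{t.edGerbe}(b), $\ed_K\cG=\ind[\cG]$ with $[\cG]\in\H^2(K,\mmu_2)=\Br(K)[2]$. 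So the theorem amounts to proving $\ind[\cG]=2^a$.

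\emph{Upper bound.} Let $\mathbb{E}$ be the Hodge bundle on $\mathcal{A}_g$, i.e.\ the pushforward of the relative cotangent sheaf of the universal abelian scheme; it has rank $g$, and $-1$ acts on it by $-\mathrm{id}$, hence through the non-trivial character $\mmu_2\to\GG_m$. Therefore $\mathbb{E}|_\cG$ is a twisted sheaf of rank $g$ for the $\GG_m$-gerbe obtained by pushing $\cG$ out along that character, whose Brauer class is the image of $[\cG]$ under the injection $\H^2(K,\mmu_2)\hookrightarrow\H^2(K,\GG_m)$. Existence of a rank-$g$ twisted sheaf forces $\ind[\cG]\mid g$, and since $[\cG]$ has exponent $2$ its index is a power of $2$; hence $\ind[\cG]\mid 2^a$. (More conceptually, the twisted exterior powers $\wedge^j\mathbb{E}$ give twisted sheaves of rank $\binom{g}{j}$ for all odd $j$, and $\gcd_{j\ \mathrm{odd}}\binom{g}{j}=2^a$.)

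\emph{Lower bound --- the crux.} One must show $\ind[\cG]\ge2^a$, which is vacuous for $g$ odd, so assume $g$ even. The class $[\cG]$ is inflated, along the surjection $\Gal(K)\twoheadrightarrow\mathrm{PSp}_{2g}(\Z/\ell\Z)$ coming from the level-$\ell$ cover $\bA_g[\ell]\to\bA_g$ (for a suitable prime $\ell\ge3$, $\ell\ne p$), from the class of the central extension $1\to\mmu_2\to\mathrm{Sp}_{2g}(\Z/\ell\Z)\to\mathrm{PSp}_{2g}(\Z/\ell\Z)\to1$, this extension being exactly the record of the $\{\pm1\}$-ambiguity in lifting a level structure. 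Now take the extraspecial $2$-group $E$ of order $2^{1+2a}$ whose faithful $2^a$-dimensional representation carries an invariant symplectic form (the ``minus type''); it sends the centre to $-\mathrm{id}$, and a block-diagonal embedding of several copies of this representation realises $E\hookrightarrow\mathrm{Sp}_{2g}(\Z/\ell\Z)$ with centre still mapping to $-\mathrm{id}$, so that $\overline E\eqdef E/\mmu_2\cong(\Z/2)^{2a}$ lies in $\mathrm{PSp}_{2g}(\Z/\ell\Z)$ with preimage $E$. Over a sufficiently generic $\overline E$-torsor the restricted class --- namely the class of $E$ in $\H^2((\Z/2)^{2a},\mmu_2)$ --- becomes a tensor product of $a$ quaternion algebras over a rational function field, and so has index $2^a$; combined with the fact that $\ed_K\cG$ dominates the essential dimension of the gerbe over any point (Theorem~\ref{thm:generic}, applied with $\codim=\dim$), this yields $\ind[\cG]\ge2^a$. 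The delicate point --- the step I expect to be the real work --- is to make ``sufficiently generic'' precise: naive full versality of the level cover is too strong (it would contradict the upper bound, using extraspecial groups of larger order), so one must control exactly how generic the relevant sub-torsor of $\bA_g[\ell]\to\bA_g$ is, or else construct by hand an explicit principally polarised abelian variety over a function field whose field-of-moduli obstruction has index $2^a$. For $g=2$ this can all be bypassed: $\mathcal{M}_2=\mathcal{H}_2$ is a dense open substack of $\mathcal{A}_2$, so $\ed\mathcal{A}_2=\ed\mathcal{H}_2=5$ by Theorem~\ref{t.hyperelliptic}.

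\emph{The stack $\mathcal{B}_g$ in characteristic $0$.} The forgetful morphism $\mathcal{A}_g\to\mathcal{B}_g$ is representable of relative dimension $0$, its fibres being the schemes of principal polarisations, so Theorem~\ref{thm.fiber-dimension} gives $\ed\mathcal{A}_g\le\ed\mathcal{B}_g$. Conversely, any $g$-dimensional abelian variety $A$ over a field $L$ carries a polarisation of some degree $d$ defined over $L$ (average an ample class over its finite Galois orbit), whence $\ed_k A\le\ed\mathcal{A}_{g,d}$, where $\mathcal{A}_{g,d}$ is the stack of abelian varieties with a degree-$d$ polarisation; and the genericity-plus-Hodge-bundle argument applied to $\mathcal{A}_{g,d}$ gives $\ed\mathcal{A}_{g,d}\le g(g+1)/2+2^a$ for every $d$. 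Hence $\ed\mathcal{B}_g\le g(g+1)/2+2^a=\ed\mathcal{A}_g\le\ed\mathcal{B}_g$, so all are equal.
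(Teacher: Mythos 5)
Your upper bound (the rank-$g$ Hodge/Lie-algebra twisted sheaf forcing $\ind[\cG]\mid\gcd(g,2^\infty)=2^a$) and your treatment of $\mc{B}_g$ in characteristic $0$ are essentially the paper's Proposition~\ref{prop:pall} and the corresponding step of its proof, and the $g=2$ shortcut via $\cH_2$ is fine. But the lower bound $\ind[\cG]\geq 2^a$ for even $g\geq 4$ --- which is the actual content of the theorem --- is not proved in your proposal, and you say so yourself: the step you label ``sufficiently generic'' is exactly the point at issue, and as you correctly observe, naive versality of the level-$\ell$ cover is false (it would contradict your own upper bound via a larger extraspecial group), so nothing in your sketch controls the index of the inflated class at the generic point of $\mathfrak{A}_g$. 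Two further steps of the sketch are also shaky: the claim that $[\cG]$ is inflated from the extension class of $\mathrm{Sp}_{2g}(\mathbb{Z}/\ell)\to\mathrm{PSp}_{2g}(\mathbb{Z}/\ell)$ needs an argument (over a non-closed base the monodromy lands in $\mathrm{GSp}$, and one must identify the gerbe class with this group-theoretic class); and the transfer device you invoke, ``Theorem~\ref{thm:generic} applied with $\codim=\dim$,'' is not the right tool --- that theorem bounds essential dimension of objects from above and says nothing about indices at points of positive transcendence degree (moreover the fields over which one can realize such classes are typically not finitely generated over $k$, so they are not residue fields of points of $\mathfrak{A}_g$ at all). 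What is needed, and what you defer, is an actual principally polarised abelian variety over some field whose gerbe/obstruction class has index $\geq 2^a$, together with a mechanism to push that bound to the generic gerbe.

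The paper fills precisely this gap by a geometric construction rather than group cohomology. It proves Theorem~\ref{thm:double}: the generic gerbe of the moduli stack $\mc{R}_{g+1}$ of \'etale double covers has index $2^a$ (note $2^a\|\,(g+1)-1=g$). This rests on Theorem~\ref{thm:torsion}, which constructs, over an explicit field, a smooth curve of genus $g+1$ with trivial automorphisms and a $2$-torsion class $\tau$ in its Picard group whose obstruction $\delta(\tau)\in\Br$ has index exactly $2^a$; the construction (Proposition~\ref{prop:ell}) uses a Tate elliptic curve over a Laurent series field, the symbol class $(s)\cup(t)$, theorems of Amitsur and Schofield--Van den Bergh to decouple order and index, and then gluing to a stable curve and smoothing over $M[[x]]$. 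The bound is then transported to $\mc{A}_g$ via the Prym map $\mc{R}_{g+1}\to\mc{A}_g$, whose image meets the locus where $\Aut=\{\pm1\}$, so the generic gerbe of $\mc{R}_{g+1}$ is a pullback of the $\mu_2$-gerbe of $\mc{A}_g$; the correct transfer mechanism is the semicontinuity of the index of a Brauer class on a smooth variety (the index at the generic point dominates the index at every point), not the genericity theorem. So your proposal reproduces the easy half of the argument but leaves the decisive lower-bound construction --- the bulk of the appendix --- unproved.
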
 
 
For $g$ odd this result is due to Miles Reid. 
 
We do not know if the restriction on $\chr(K)$ is really necessary; in 
Theorem \ref{thm:eda1} we show by elementary methods that for $g=1$ it 
is not. 
 
\smallskip 
 
The main ingredient in the proof, aside from Theorem~\ref{thm:generic}, 
is: 
\begin{thm} 
\label{thm:double} 
Let $K$ be a field with $\chr(K) \neq 2$ and let $\mc{R}_g$ be the 
moduli stack of (connected) etale double covers of smooth projective 
curves of genus $g$ with $g >2$ over $K$. Then the index of the 
generic gerbe of $\mc{R}_g$ is $2^b$, where $2^b$ is the largest power 
of $2$ dividing $g-1$. Furthermore, if $\mc{R}_g$ is tame then $\ed 
\mc{R}_g = 3g -3 + 2^b$. 
\end{thm}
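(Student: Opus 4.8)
The plan is to use the Genericity Theorem~\ref{thm:generic} to reduce the computation of $\ed \cR_g$ to a computation of the essential dimension of the generic gerbe, and then to identify that gerbe explicitly in terms of a $\mmu_2$-class whose index I control. First I would verify that $\cR_g$ is a smooth integral tame Deligne--Mumford stack locally of finite type over $K$: smoothness and irreducibility are standard (the stack of curves of genus $g$ is smooth and irreducible, and the \'etale double covers are parametrized by the nonzero $2$-torsion of the Picard, which is an \'etale cover of $\cM_g$); tameness is where the hypothesis $\chr(K) \ne 2$ enters and, for the second sentence, where I simply assume it. Granting this, Theorem~\ref{thm:generic} gives
\[
\ed \cR_g = \dim \cR_g + \ed_{K(\bR_g)} (\cR_g)_{K(\bR_g)} = (3g-3) + \ed_{K(\bR_g)} (\cR_g)_{K(\bR_g)},
\]
since $\dim \cR_g = \dim \cM_g = 3g-3$ (the double cover data is discrete). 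So the theorem reduces to showing the generic gerbe has essential dimension $2^b$, and the index claim is really the heart of the matter.

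Next I would describe the generic gerbe. Over the function field $K(\bR_g)$ we have a generic curve $C$ of genus $g$ together with a nontrivial point $\alpha \in \Pic^0(C)[2]$, i.e.\ a line bundle $\cL$ with $\cL^{\otimes 2} \cong \cO_C$ defining the connected \'etale double cover $\widetilde C \to C$. The automorphisms of such an object over the generic point: the generic curve of genus $g \ge 3$ has no nontrivial automorphisms, so the automorphism group of $(C,\alpha)$ is just the deck group $\ZZ/2$ of the cover (equivalently $\mmu_2$ in characteristic $\ne 2$). Hence the generic gerbe is banded by $\mmu_2$, and it is the gerbe of ``square roots of $\cO_C$ twisted by $\alpha$'' — more precisely it classifies, over extensions $L/K(\bR_g)$, lifts of the $\mmu_2$-torsor $\widetilde C$ together with a choice of the line bundle $\cL$. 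The class of this gerbe in $\H^2(K(\bR_g), \mmu_2)$ is the obstruction to the existence of such data over $K(\bR_g)$ itself; by the theory of gerbes this is a cup product / boundary class, and I claim its index is $2^b$, the largest power of $2$ dividing $g-1$. The appearance of $g-1$ comes from the fact that a square root of a line bundle of degree $d$ exists Zariski-locally iff $d$ is even, and the relevant degree here is forced by Riemann--Roch / adjunction considerations on the double cover to be congruent to $g-1$ mod $2$ in the obstructed situation — this is exactly the phenomenon that occurred in the proof of Theorem~\ref{t.hyperelliptic}, where the parity of $g+1$ governed whether the $\mmu_2$-gerbe of hyperelliptic curves was neutral. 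So I would compute $[\,(\cR_g)_{K(\bR_g)}\,] \in \H^2(K(\bR_g),\mmu_2)$ as (a multiple of) the image of a Brauer class under $\H^1(K(\bR_g),\PGL_n) \to \H^2(K(\bR_g),\mmu_2)$, or directly as a cup product, and show its index is $2^b$.

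Granting the index computation, the rest follows from Theorem~\ref{t.edGerbe}(b): a $\mmu_2$-gerbe with class of index $2^b$ has essential dimension exactly $2^b$. Combining with the genericity reduction yields
\[
\ed \cR_g = (3g-3) + 2^b,
\]
as asserted (and when $g-1$ is odd, $2^b = 1$ and the gerbe is neutral, recovering $\ed \cR_g = 3g-3$, consistent with $\cR_g$ being generically an orbifold in that case). The main obstacle I expect is the index computation: I need to pin down precisely which class in $\H^2(K(\bR_g),\mmu_2)$ the generic gerbe represents, show that over a purely transcendental-type extension it is nonneutral exactly to the extent measured by $2 \mid g-1$, and bound its index both above (by exhibiting a splitting field of the right transcendence degree, e.g.\ via a tautological-section construction mimicking the argument in the proof of Theorem~\ref{t.hyperelliptic}) and below (via an injectivity/purity argument showing a suitable cup product has the claimed exponent, as in Lemma~\ref{l.TateTorsion}). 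Verifying tameness of $\cR_g$ in positive characteristic is a secondary technical point, but for the stated theorem it is hypothesized, so I would not belabor it.
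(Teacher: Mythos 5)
Your framing is the same as the paper's: reduce via the Genericity Theorem~\ref{thm:generic} to the generic gerbe, note it is banded by $\mmu_{2}$, and then invoke Theorem~\ref{t.edGerbe}(b) so that everything comes down to showing the index of that gerbe is exactly $2^{b}$. But the index computation, which you yourself flag as ``the heart of the matter,'' is precisely what is missing, and the route you sketch for the lower bound cannot work. The correct identification of the class is the Picard-functor obstruction: over $K(\bR_g)$ one has the curve $C$ and a $2$-torsion point $\sigma$ of $\operatorname{Pic}^{0}_{C}$ as a functor point, and the gerbe class is $\delta(\sigma)\in\Br(K(\bR_g))$, the obstruction to representing $\sigma$ by an honest line bundle (not a square-root-parity obstruction as in Theorem~\ref{t.hyperelliptic}). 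The upper bound $\ind\delta(\sigma)\mid 2^{b}$ is then the easy half, via the maps $\operatorname{Sym}^{d}(C)\to\operatorname{Pic}^{d}_{C}$ and Riemann--Roch (index divides $g-1$) combined with $2$-torsionness of $\delta(\sigma)$ (this is \S\ref{sec:delta} of the paper), not via ``a splitting field of the right transcendence degree,'' which controls canonical dimension rather than index.

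The genuine gap is the lower bound $\ind \geq 2^{b}$. Your proposed tool --- ``an injectivity/purity argument showing a suitable cup product has the claimed exponent, as in Lemma~\ref{l.TateTorsion}'' --- is structurally unable to give this: the gerbe is banded by $\mmu_{2}$, so its Brauer class has exponent dividing $2$, while the index to be established is $2^{b}$, which exceeds the exponent as soon as $4\mid g-1$. Exponent-based cup-product arguments (which is all that Lemma~\ref{l.TateTorsion} provides) therefore top out at $2$; one needs genuine index control with exponent $2$. This is exactly what the paper's Theorem~\ref{thm:torsion} and Proposition~\ref{prop:ell} supply: starting from $\alpha=(s)\cup(t)$ of index and exponent $n=2^{b}$, one passes to the function field of the Brauer--Severi variety of $m\alpha$ so that (Amitsur, Schofield--Van den Bergh) the exponent drops to $m=2$ while the index stays $n$; a Tate elliptic curve over a Laurent series field then realizes this class as $\delta$ of a $2$-torsion Picard point of a torsor; gluing two copies of that genus-$1$ curve along closed points of degree $n$ and smoothing over $M[[x]]$ (Raynaud, plus lifting the torsion class by \'etaleness of multiplication by $m$) produces a smooth genus-$g$ curve with trivial automorphisms and a $2$-torsion Picard class whose obstruction still has index $2^{b}$. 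Finally, because that curve defines a point in the locus where $\cR_g\to\mathfrak{R}_g$ is a $\mmu_{2}$-gerbe and the index of a Brauer class on a smooth variety can only drop under specialization from the generic point, the generic gerbe has index $\geq 2^{b}$. Without some construction of this kind your argument establishes only $\ind\in\{1,2,\dots\}$ dividing $2^{b}$, hence at best $\ed\cR_g\leq 3g-3+2^{b}$ together with the trivial lower bound $3g-3$, and the theorem does not follow.
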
 
 
 
\smallskip 
 
The two theorems stated above are connected via the Prym map 
$\mc{R}_{g+1} \to \mc{A}_g$. 
 
\subsection{} 
 
It is easy to get an upper bound on the index of the generic gerbe of 
$\mc{A}_{g,d}$ over any field. This gives an upper bound on the essential 
dimension whenever $\mc{A}_{g,d}$ is smooth and tame. 
 
\begin{prop} 
\label{prop:pall} 
Let $d>0$ be an integer and $\mc{A}_{g,d}$ the moduli stack of abelian 
varieties with a polarisation of degree $d$ over $K$. Then 
\begin{enumerate} 
\item The index of the generic gerbe of each irreducible component of 
  $\mc{A}_{g,d}$ is $\leq 2^a$ if $\chr(K) \neq 2$. If $\chr(K) =2$ 
  then the generic gerbes are all trivial. 
\item If $p = \chr(K) > 0$  assume that $p \nmid d \cdot 
  |GL_{2g}(\Z/\ell \Z)|$ (or if $d=1$, $p \nmid |Sp_{2g}(\Z/\ell 
  \Z)|$) for some prime $\ell >2$. Then $\ed \mc{A}_{g,d} \leq 
  g(g+1)/2 + 2^a$. 
\end{enumerate} 
\end{prop}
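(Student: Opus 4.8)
The plan is to identify the band and bound the index of the generic gerbe of each irreducible component of $\mc{A}_{g,d}$ by an explicit construction, and then to feed this into the genericity theorem. First I would fix an irreducible component $\mc{C}$ of $\mc{A}_{g,d}$, write $K$ for its function field, and let $(A,\lambda)$ over $K$ be the tautological polarised abelian variety. It is standard that the automorphism group of the generic object of $\mc{C}$ is $\{\pm1\}$ (loci with larger automorphism groups being of strictly smaller dimension); this is an \'etale group scheme of order $2$, hence isomorphic to $\ZZ/2$, and to $\mmu_2$ when $\chr K\neq2$. So the generic gerbe $\cX_K\arr\Spec K$ of $\mc{C}$ is banded by this group. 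When $\chr K=2$ one has $\H^2_{\et}(K,\ZZ/2)=0$, since the Artin--Schreier sequence $0\arr\ZZ/2\arr\ga\xrightarrow{x\mapsto x^2-x}\ga\arr0$ together with the vanishing of the higher cohomology of $\ga$ over a field forces $\H^2(K,\ZZ/2)=0$; thus $\cX_K$ is trivial, which settles the characteristic $2$ part of (1).

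Assume now $\chr K\neq2$. By the Kummer sequence $\H^2(K,\mmu_2)=\Br(K)[2]$, so $\ind[\cX_K]$ is a power of $2$. The key observation is that the Hodge bundle $\omega\eqdef e^*\Omega^1_{A/K}$, i.e.\ the cotangent space of $A$ at the identity section $e$, is a rank $g$ vector bundle on $\cX_K$ on which the inertia $\mmu_2=\{1,[-1]\}$ acts through its nontrivial character: indeed $[-1]$ induces $-\id$ on $\mathrm{Lie}(A)$, hence $-\id$ on $\omega$, and $-1\neq1$ in $K$. Consequently $\PP(\omega)$ has trivial inertia action and descends to a Brauer--Severi variety of dimension $g-1$ over $K$ whose class in $\Br(K)$ is the image of $[\cX_K]$ (equivalently, $\underhom(\omega,\omega)$ descends to an Azumaya $K$-algebra of degree $g$ representing $[\cX_K]$). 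Hence $\ind[\cX_K]$ divides $g$, and being a power of $2$ it divides $2^a$, the largest power of $2$ dividing $g$. (More generally $\wedge^i\omega$ with $i$ odd shows $\ind[\cX_K]\mid\binom{g}{i}$, and $\gcd\{\binom{g}{i}:i\text{ odd}\}=2^a$ by Kummer's theorem on carries, but $i=1$ already suffices.) This proves (1).

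For (2): the hypothesis forces $p$ to be odd, since $|GL_{2g}(\ZZ/\ell\ZZ)|$ is even for $\ell>2$. As $p\nmid d$, the stack $\mc{A}_{g,d}$ is smooth and Deligne--Mumford; and since any automorphism of a polarised abelian variety over an algebraically closed field acts faithfully on the $\ell$-torsion for $\ell>2$, all automorphism groups have order dividing $|GL_{2g}(\ZZ/\ell\ZZ)|$ (and, when $d=1$, dividing $|Sp_{2g}(\ZZ/\ell\ZZ)|$, as the automorphisms then preserve the symplectic Weil pairing), which is prime to $p$; hence $\mc{A}_{g,d}$ is tame. By Proposition~\ref{p.extensions} I may replace $K$ by the prime field, which is perfect, so Theorem~\ref{thm:generic} applies to each component $\mc{C}$ and gives $\ed\mc{C}=\dim\mc{C}+\ed_K\cX_K=g(g+1)/2+\ed_K\cX_K$. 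Since $\ind[\cX_K]$ is a prime power, Theorem~\ref{t.edGerbe}(b) together with (1) gives $\ed_K\cX_K=\ind[\cX_K]\leq2^a$; taking the maximum over the components of $\mc{A}_{g,d}$ yields $\ed\mc{A}_{g,d}\leq g(g+1)/2+2^a$.

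The crux is the observation in the second paragraph: recognising the Hodge bundle as a weight-one twisted sheaf on the generic gerbe is what converts the elementary fact that $[-1]$ acts by $-1$ on invariant differentials into a bound on the index of the Brauer class. The remaining ingredients are routine: the \'etale computation in characteristic $2$, the standard fact that a weight-one locally free sheaf of rank $r$ on a $\mmu_2$-gerbe over a field forces the associated Brauer class to have index dividing $r$, and the verification that the characteristic hypotheses in (2) make $\mc{A}_{g,d}$ smooth, tame and Deligne--Mumford.
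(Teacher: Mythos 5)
Your proposal is correct and follows essentially the same route as the paper's proof: identify the generic band as $\ZZ/2$ (equal to $\mmu_2$ away from characteristic $2$), use the rank-$g$ bundle attached to the universal abelian variety (the paper uses the Lie algebra, you use its dual, the Hodge bundle) as a weight-one twisted sheaf to bound the index by $g$ and hence by $2^a$, handle characteristic $2$ via the vanishing of $\H^2(K,\ZZ/2)$, and deduce (2) from smoothness, tameness and Theorems~\ref{thm:generic} and~\ref{t.edGerbe}(b). Your explicit reduction to the prime field via Proposition~\ref{p.extensions} to meet the perfect-field hypothesis of Theorem~\ref{thm:generic} is a detail the paper leaves implicit, but it is not a different argument.
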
 
 
\begin{proof} 
  For any $g,d$, $\mc{A}_{g,d}$ is a Deligne--Mumford stack over $K$ 
  with each irreducible component of dimension $g(g+1)/2$ (see 
  \cite{oort-norman} for the case $\chr(K)| d$).  It is a consequence 
  of a theorem of Grothendieck \cite[Theorem 2.4.1]{oort-lifting}, 
  that if $p \nmid d$ then $\mc{A}_{g,d}$ is smooth. Furthermore, if 
  $p \nmid |GL_{2g}(\Z/\ell \Z)|$ (or if $d=1$, $p \nmid 
  |Sp_{2g}(\Z/\ell \Z)|$) for $\ell$ as above then $\mc{A}_{g,d}$ is 
  also tame.  By Theorem~\ref{thm:generic} we see that (2) follows 
  from (1). 
 
  Assume $\chr(K) \neq 2$. The generic gerbe is a gerbe banded by 
  $\Z/2\Z = \mu_2$ so the index is a power of $2$. The Lie algebra 
  ${Lie}_{g,d}$ of the universal family of abelian varieties over 
  $\mc{A}_{g,d}$ is a vector bundle of rank $g$ on which the 
  automorphism $x\mapsto -x$ of the universal family induces 
  multiplication by $-1$. So ${Lie}_{g,d}$ gives rise to a 
  \emph{twisted} sheaf (see e.g. \cite[Section 3]{lieblich-twisted}) 
  on the generic gerbe of each component, hence the index divides 
  $g$. We conclude that the index divides the largest power of $2$ 
  dividing $g$ i.e. $2^a$. 
 
  For any field $L$ of characteristic $2$,  $H^2(L, \Z/2\Z) = 0$ so the 
  generic gerbes above are all trivial if $\chr(K) =2$. 
\end{proof} 
 
If $g$ is odd then it follows that $\ed \mc{A}_{g,d} = g(g+1)/2$ 
whenever $\mc{A}_{g,d}$ is tame and smooth; this was first proved by 
Miles Reid using Kummer varieties. For even $g$ we now use Theorem 
\ref{thm:double}, which we will prove later, to complete the proof of 
Theorem \ref{thm:mainab}. 
 
\begin{proof}[Proof of Theorem \ref{thm:double} implies Theorem 
  \ref{thm:mainab}] 
  We may assume that $g > 1$ since it is known that if $g=1$ then 
  $\ed \mc{A}_g ( = \mc{B}_g) = 2$ (by Theorem~\ref{thm.curves} or Section \ref{sec:a1}). 
 
  We first recall the construction of the Prym map $P: \mc{R}_{g+1} 
  \to \mc{A}_g$. 
 
  Let $f:X \to S$ be a family of smooth projective curves of genus 
  $g+1$ and let $\pi:Y \to X$ be a finite etale double cover (so that 
  the fibres of the composite morphism $f':Y \to S$ are smooth 
  projective curves of genus $2g +1$). Let $\Pic^0_{X/S}, 
  \Pic^0_{Y/S}$ be the corresponding relative Jacobians and let $N : 
  \Pic^0_{Y/S} \to \Pic^0_{X/S}$ be the norm map. The identity 
  component of the kernel of $N$ is an abelian scheme $Prym(Y/X)$ over 
  $S$ of relative dimension $g$ and the involution of $Y$ over $X$ 
  induces an automorphism of $\Pic^0_{Y/S}$ which restricts to 
  multiplication by $-1$ on $Prym(Y/X)$. Furthermore, the canonical 
  principal polarisation on $\Pic^0_{Y/S}$ restricts to $2\lambda$, 
  where $\lambda$ is a principal polarisation on $Prym(Y/X)$. Then $P$ 
  is given by sending $(f:X \to S, \pi:Y\to X)$ to $(Prym(Y/X) \to S, 
  \lambda)$.  The coarse moduli space $\mf{R}_{g+1}$ of $\mc{R}_{g+1}$ 
  is an irreducible variety and $P$ induces a morphism, which we also 
  denote by $P$, $\mf{R}_{g+1} \to \mf{A}_g$. 
 
  Let $\mf{A}_g'$ be the open subvariety of $\mf{A}_g$ corresponding 
  to principally polarised abelian varities $A$ with $Aut(A) = \{ \pm 
  Id \}$. Then $\mc{A}_g |_{\mf{A}_g'} \to \mf{A}_g'$ is a $\mu_2$ 
  gerbe.  Since $P(\mf{R}_{g+1}) \cap \mf{A}_g' \neq \emptyset$ it 
  follows that the generic gerbe of $\mc{R}_{g+1}$ is isomorphic to 
  $\mc{A}_g \times_{\mf{A}_g} \Spec \ K(\mf{R}_{g+1})$.  Since the 
  index at the generic point of an element of the Brauer groups of a 
  smooth variety is greater than or equal to the index at any other 
  point, it follows that the index of the generic gerbe of $\mc{A}_g$ 
  is greater than or equal to the index of the generic gerbe of 
  $\mc{R}_{g+1}$. By Theorem \ref{thm:double} the latter index is $2^a$ 
  and then using Proposition \ref{prop:pall} we deduce the first 
  equality of Theorem \ref{thm:mainab} (1) and also (2), since 
  $\mc{A}_g$ is tame whenever $p \nmid |Sp_{2g}(\Z/\ell \Z)|$ for some 
  prime $\ell \neq 2$. 
 
  Now suppose $\chr(K) = 0$ and let $A$ be any abelian variety of 
  dimension $g$ over an extension field $L$ of $K$. Since $A$ is 
  projective, it follows that $A$ has a polarisation of degree $d$ for 
  some $d>0$ and hence corresponds to an object of $\mc{A}_{g,d}(L)$. 
  By Proposition \ref{prop:pall}, it follows that $A$ together with 
  its polarisation can be defined over a field of transcendence degree 
  $\leq g(g+1)/2 + 2^a$ over $K$, hence $\ed \mc{B}_g \leq g(g+1)/2 = 
  2^a$. A principally polarised abelian variety $A$ over $L$ such that 
  the image of $\Spec \ L$ in $\mf{A}_g$ is the generic point has a 
  unique polarisation which is defined whenever the abelian variety is 
  defined.  It then follows from the previous paragraph that there 
  exists an abelian variety defined over an extension of transcendence 
  degree $g(g+1) + 2^a$ over $K$ which cannot be defined over a 
  subextension of lesser transcendence degree. This proves the second 
  equality of Theorem \ref{thm:mainab} (1). 
\end{proof}

\subsection{} 
\label{sec:delta} 
For any morphism $f:X \to S$, we denote by $\Pic_{X/S}$ the relative 
Picard functor \cite[Chapter 8]{BLR}.  If $\Pic_{X/S}$ is 
representable we use the same notation to denote the representing 
scheme and if $S = \Spec(K)$ is a field we drop it from the notation. 
 
We recall from \cite[Chapter 8, Proposition 4]{BLR} that if $f$ is 
proper and cohomologically flat in dimension $0$, then for any 
$S$-scheme $T$ we have a canonical exact sequence 
\begin{equation} 
\label{seq:relpic} 
0 \to \Pic(T) \to \Pic(X\times_S T) \to \Pic_{X/S}(T) \sr{\delta}{\to} \Br(T) \to \Br(X \times T)  
\end{equation} 
so $\delta(\tau) \in \Br(T)$, for $\tau \in \Pic_{X/S} (T)$, is the 
obstruction to the existence of a line bundle $\mc{L}$ on $X \times_S 
T$ representing $\tau$. 
 
If $X$ is a smooth projective curve over a field, then using the 
morphisms $Sym^d(X) \to \Pic^d_X$ for $d > 0$, the Riemann--Roch 
theorem and Serre duality one sees that the index of $\delta(\tau)$ 
divides $\chi(\tau) = \deg(\tau) + 1 - g$. Since $\delta$ is a 
homomorphism it follows that if $\tau$ is of order $m$ then the order 
of $\delta(\tau)$ divides $m$. We deduce that in this case the index 
of $\delta(\tau)$ divides the largest integer dividing $ g - 1$ all of 
whose prime divisors also divide $m$. Note that if $g=1$ then we do 
not get any bound on the index 
 
\subsection{} 
 
Let $A$ be an abelian variety over a field $K$, let $\tau \in 
\Pic^0_X(K)$, let $\theta \in H^1(K,A)$ and let $P$ be the $A$-torsor 
corresponding to $\theta$. Since $\Pic^0_P$ is canonically isomorphic to 
$\Pic^0_A$, we may view $\tau$ as an element $\tau_P$ of 
$\Pic^0_P(K)$. 
\begin{lem} 
\label{lem:ext} 
With the notation as above, the subgroups of $\Br(K)$ generated by 
$\delta(\tau_P)$ and $\partial(\theta)$ are equal, where $\partial$ is 
the boundary map in the long exact sequence of Galois cohomology 
corresponding to the extension of commutative group schemes 
\[ 
1 \to \G_m \to S \to A \to 0 
\] 
associated to $\tau$ via the isomorphism $\Pic^0_A(K) = Ext^1(A, \G_m)$. 
\end{lem}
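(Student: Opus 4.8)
\subsection{}

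The plan is to represent both $\delta(\tau_P)$ and $\partial(\theta)$ by explicit $2$-cocycles valued in a suitable $L^{*}$ and check that they agree up to sign. Fix a separable closure $\bar K$ of $K$. Since $\theta$ is continuous it is split by a finite Galois extension, and after enlarging it we may pick a finite Galois extension $L/K$, with $\Gamma=\Gal(L/K)$, such that: $\theta$ is represented by a $1$-cocycle $(a_\sigma)_{\sigma\in\Gamma}$ with $a_\sigma\in A(L)$; $P(L)\neq\emptyset$; $\tau_P$ is represented by an honest line bundle on $P_L$; and each $a_\sigma$ lifts to a point $s_\sigma\in S(L)$ under the surjection $S(L)\to A(L)$. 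This is harmless because both $\partial(\theta)$ and $\delta(\tau_P)$ then lie in $\Br(L/K)=H^{2}(\Gamma,L^{*})\subseteq\Br(K)$: $\partial(\theta)$ dies over $L$ since $\theta$ does, and $\delta(\tau_P)$ dies over $L$ since $\tau_P$ becomes representable there.

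With these choices, $\partial(\theta)$ is the class of the $2$-cocycle $e_{\sigma,\rho}:=s_\sigma\cdot{}^{\sigma}s_\rho\cdot s_{\sigma\rho}^{-1}$, which lies in $\ker\bigl(S(L)\to A(L)\bigr)=L^{*}$ by the cocycle identity for $(a_\sigma)$. For $\delta(\tau_P)$, pick $p_0\in P(L)$ and let $\psi\colon A_L\xrightarrow{\ \sim\ }P_L$ be translation by $p_0$. Under $\psi$ the Galois action on $P_L$ becomes the semilinear twisted action $\rho_\sigma=t_{a_\sigma}\circ\sigma_A$ on $A_L$, where $\sigma_A$ is base change by $\sigma$ and $t_{a_\sigma}$ is translation by $a_\sigma$; moreover $\psi^{*}\tau_P=\tau$, since translations act trivially on $\Pic^{0}$ and $\Pic^{0}_{P/K}\cong\Pic^{0}_{A/K}$ canonically. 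Hence $\delta(\tau_P)$ is exactly the obstruction to descending, along $(\rho_\sigma)$, the $\G_m$-torsor $\cT:=\bigl(S_L\to A_L\bigr)$ on $A_L$, whose class is $\tau$. By \cite[Chapter~8]{BLR} (equivalently, by the low-degree terms of the Leray spectral sequence for $P_L\to\Spec L$ with $\G_m$-coefficients, in which $\Gamma(P_L,\G_m)=L^{*}$ because $P(L)\neq\emptyset$ and $P$ is proper and geometrically connected) this obstruction is computed by the $2$-cocycle in $L^{*}$ described next.

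For each $\sigma\in\Gamma$ build an isomorphism $\phi_\sigma\colon\rho_\sigma^{*}\cT=\sigma_A^{*}t_{a_\sigma}^{*}\cT\xrightarrow{\ \sim\ }\cT$ as a composite: first the isomorphism $t_{a_\sigma}^{*}\cT\xrightarrow{\ \sim\ }\cT$ induced by left translation $L_{s_\sigma}\colon S_L\to S_L$ by $s_\sigma$ in the group scheme $S$ — this covers $t_{a_\sigma}$ and is $\G_m$-equivariant because $\G_m$ is central in $S$, and $L_{\lambda s_\sigma}=\lambda\,L_{s_\sigma}$ for $\lambda\in L^{*}$ — followed by the tautological descent isomorphism $c_\sigma\colon\sigma_A^{*}\cT\xrightarrow{\ \sim\ }\cT$ coming from the fact that $S_L\to A_L$ is the base change of $S_K\to A_K$; these $c_\sigma$ satisfy $c_\sigma\circ\sigma_A^{*}(c_\rho)=c_{\sigma\rho}$ on the nose. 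Then $\delta(\tau_P)$ is the class of the $2$-cocycle $e'_{\sigma,\rho}\in L^{*}$ determined by $\phi_\sigma\circ{}^{\sigma}\phi_\rho=e'_{\sigma,\rho}\,\phi_{\sigma\rho}$. Unwinding, the $c_\sigma$ contribute nothing since they already form a strict cocycle, while $L_{s_\sigma}\circ{}^{\sigma}L_{s_\rho}$ and $L_{s_{\sigma\rho}}$ differ precisely by left multiplication by the central element $s_\sigma\cdot{}^{\sigma}s_\rho\cdot s_{\sigma\rho}^{-1}=e_{\sigma,\rho}$, so $e'_{\sigma,\rho}=e_{\sigma,\rho}^{\pm1}$. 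Therefore $\delta(\tau_P)=\pm\,\partial(\theta)$ in $\Br(K)$, and in particular the two elements generate the same subgroup of $\Br(K)$.

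The substantive part is the bookkeeping in the last paragraph: tracking the variances of the pullbacks ($\sigma_A^{*}$ versus $\rho_\sigma^{*}$, and the composition order in the semilinear twisted action), checking that the descent obstruction for a $\G_m$-torsor is given by the $2$-cocycle $e'_{\sigma,\rho}$ above, and confirming that the scalar discrepancy of the $L_{s_\sigma}$'s is exactly the untwisted expression $s_\sigma\,{}^{\sigma}s_\rho\,s_{\sigma\rho}^{-1}$ rather than a version distorted by an extra automorphism. The sign $\pm$, which depends on whether one works with $\cT$ or its inverse and on the chosen directions of the isomorphisms, is immaterial for the statement, which only asserts equality of the generated subgroups.
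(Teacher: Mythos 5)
Your argument is correct, and it proves more than the lemma asks, but it is a genuinely different route from the one taken in the paper. The paper never writes down cocycles: it shows that for \emph{every} extension $L/K$ the class $\delta(\tau_P)$ vanishes in $\Br(L)$ if and only if $\partial(\theta)$ does, using the geometric dictionary that a line bundle on $P_L$ representing $\tau_P$ yields, by deleting its zero section, an $S_L$-torsor $Q$ with $Q\times_{S_L}A_L\cong P_L$, and conversely; the key input, cited from Mumford, is that $S$ is the complement of the zero section of the line bundle $\cL$ on $A$ with class $\tau$. Having the same splitting fields, the two classes generate the same subgroup of $\Br(K)$ (implicitly via the generic splitting field of the Brauer--Severi variety and Amitsur's theorem). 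You instead prove the sharper identity $\delta(\tau_P)=\pm\,\partial(\theta)$ by explicit Galois descent, which is exactly the strengthening the paper remarks is ``very likely'' but deliberately avoids. Your deferred bookkeeping does close: if you work with total-space maps rather than pullbacks, setting $\Phi_\sigma:=L_{s_\sigma}\circ\sigma_S$ (translation by $s_\sigma$ in the commutative group $S_L$ composed with the canonical semilinear automorphism $\sigma_S$ of $S_L=S\times_K L$), then $\Phi_\sigma$ covers $\rho_\sigma=t_{a_\sigma}\circ\sigma_A$, is semilinearly $\G_m$-equivariant, and $\Phi_\sigma\circ\Phi_\rho=\bigl(s_\sigma\cdot{}^{\sigma}s_\rho\cdot s_{\sigma\rho}^{-1}\bigr)\cdot\Phi_{\sigma\rho}$ on the nose, so the discrepancy is the untwisted expression and the sign ambiguity is only a matter of conventions, which is harmless for the statement. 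Two small points you should make explicit: the fact that the $\G_m$-torsor $S\to A$ attached to $\tau$ under $\Pic^0_A(K)\cong \ext^1(A,\G_m)$ has class $\tau$ is the same Barsotti--Weil/Mumford fact the paper quotes, and should be invoked rather than taken silently; and the lifts $s_\sigma\in S(L)$ exist over any field $L$ by Hilbert~90 applied to $1\to\G_m\to S\to A\to 0$, so no enlargement of $L$ is needed for that item. In exchange for the heavier semilinear bookkeeping, your approach is self-contained at the cochain level and yields the finer equality of classes; the paper's approach is softer and purely geometric, at the cost of routing the final step through the splitting-field criterion for membership in the cyclic subgroup generated by a Brauer class.
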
 
 
\begin{proof} 
  We first remark that as a $\G_m$ bundle on $A$, $S$ is just the 
  complement of the zero section of $\mc{L}$, where $\mc{L}$ is the 
  line bundle on $A$ corresponding to $\tau$ (see e.g.~\cite[Theorem 
  1, p.225]{av}). 
   
  Now let $L$ be any field extension of $K$. If $\delta(\tau_p) = 0$ 
  in $\Br(L)$ then $\tau_P$ is represented by a line bundle $\mc{L}$ 
  on $P_L$. Using the remark above, one sees that $Q$, the complement 
  of the zero section in $\mc{L}$, is an 
  $S_L$-torsor such that $Q \times_{S_L} A_L = P_L$. This implies that 
  $\partial(\theta) = 0$ in $\Br(L)$. Conversely, if $\partial(\theta) 
  = 0$ in $\Br(L)$ then there is a (unique) $S_L$-torsor $Q$ such that 
  $Q \times_{S_L} A_L = P_L$. This gives a $\G_m$ bundle over $P_L$ 
  and hence a line bundle on $P_L$ which represents $\tau_P$, so we 
  must have $\delta(\tau_p) = 0$ in $\Br(L)$. 
 
  It follows that the splitting fields of $\partial(\theta)$ and 
  $\delta(\tau_P)$ are the same, hence the two elements must generate 
  the same subgroup in $\Br(K)$. 
\end{proof} 
 
It is very likely that $\delta(\tau_P)$ and $\partial(\theta)$ are equal, 
at least upto sign, but we shall not need this.

\subsection{} 
 
 
Given a smooth projective curve over a field $K$ and an element $\tau$ 
of $\Pic_X(K)$, one may ask how large the index of $\delta(\tau)$ can 
be. In the case that $\tau$ is torsion, the theorem below shows that 
the best upper bound on the index which is valid over all fields is 
the one given in Section \ref{sec:delta}. 
 
\begin{thm} 
\label{thm:torsion} 
Let $g > 0$ be an integer, $n >0$ an integer such that $n$ divides $ 
g-1$ and $\chr(K)\nmid n$, and $m> 0$ such that $m|n$ and $m,n$ have 
the same prime factors.  Then there exists an extension $L$ of $K$, a 
smooth projective curve $X$ of genus $g$ over $L$ with 
$Aut(X_{\ol{L}}) = \{Id\}$ if $g>2$, and an element $\tau$ of order $m$ 
in $\Pic_X(L)$ such that the index of $\delta(\tau)$ is $n$. 
\end{thm}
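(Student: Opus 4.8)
The plan is to compute $\delta(\tau)$ through the boundary map of an extension of an abelian variety by $\G_m$ (Lemma \ref{lem:ext}), to realise the largest index permitted by \S\ref{sec:delta} on a totally degenerate curve built in the spirit of \S\ref{s.Tate}, and then to cut the index down to the prescribed value by passing to the function field of a generalised Severi--Brauer variety. We may first dispose of small genus: when $g=2$ the hypothesis $n\mid g-1=1$ forces $n=m=1$, so $\tau=\mc{O}_X$ and the statement is trivial, and when $g=1$ the divisibility hypothesis is vacuous and the assertion follows from the Tate--curve computation of \S\ref{s.Tate} together with the Severi--Brauer reduction described below. So assume $g\ge 3$, and, replacing $K$ by a finite extension (harmless as $\chr(K)\nmid n$), assume $\mu_n\subseteq K$.

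First I would construct a Mumford curve $X_0$ of genus $g$ over a complete discretely valued field $K_1=K'((q))$, with $K'/K$ finite, chosen generically so that $\Aut((X_0)_{\ol{K_1}})=\{\mathrm{Id}\}$ (possible for $g\ge 2$, since a generic rank-$g$ Schottky group is self-normalising); then set $F_0=K_1(t_1,\dots,t_g)$ and choose $\tau_0\in\Pic^0_{X_0}(F_0)$ of exact order $m$. Pulling back along the Abel--Jacobi embedding $X_0\hookrightarrow\Pic^1_{X_0}$, which induces an isomorphism on degree-zero Picard schemes, identifies $\tau_0$ (via the canonical principal polarisation) with a class in $\Pic^0_{\Pic^1_{X_0}}(F_0)=\mathrm{Ext}^1(\Pic^0_{X_0},\G_m)$, hence with an extension $1\to\G_m\to S\to\Pic^0_{X_0}\to 1$ with boundary map $\partial\colon H^1(F_0,\Pic^0_{X_0})\to\Br(F_0)$. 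By Lemma \ref{lem:ext} applied to the $\Pic^0_{X_0}$-torsor $\Pic^1_{X_0}$, together with functoriality of the sequence \eqref{seq:relpic} along $X_0\hookrightarrow\Pic^1_{X_0}$, the classes $\delta(\tau_0)$ and $\partial(\theta_0)$, where $\theta_0=[\Pic^1_{X_0}]$, generate the same subgroup of $\Br(F_0)$, so they have equal index. Since $\chi(\tau_0)=\deg\tau_0+1-g=1-g$ and $\delta(\tau_0)$ is killed by $m$, \S\ref{sec:delta} shows this common index divides $N\eqdef\prod_{p\mid m}p^{v_p(g-1)}$.

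Second, the Mumford uniformisation presents the Galois module $\Pic^0_{X_0}[n]$ in an exact sequence $0\to\mu_n^g\to\Pic^0_{X_0}[n]\to(\Z/n)^g\to 0$, and makes $\partial(\theta_0)$ an explicit combination of cyclic classes $q\cup(t_j)$ whose coefficients are read off from the symmetric positive-definite valuation matrix of $X_0$, in the manner of the computation proving Lemma \ref{l.TateTorsion}. Choosing the Schottky data suitably, one arranges that $\ind(\delta(\tau_0))$ equals $N$ (matching the upper bound of the previous paragraph). Finally, writing $D=\bigotimes_i D_{p_i}$ for the primary decomposition of the underlying division algebra of $\delta(\tau_0)$ and $n=\prod_i p_i^{e_i}$ (with $1\le e_i\le v_{p_i}(g-1)$, since $n\mid g-1$ and $n,m$ have the same prime factors), I would pass to the function field $L$ of the $F_0$-variety $\prod_i\mathrm{SB}_{p_i^{e_i}}(D_{p_i})$; by the standard behaviour of the index under function fields of generalised Severi--Brauer varieties in the prime-power case, $\ind_L(D_{p_i})=p_i^{e_i}$ for each $i$, hence $\ind_L(D)=n$, and by base-change compatibility of \eqref{seq:relpic} this is the index of $\delta$ of the pulled-back class.

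Putting $X=(X_0)_L$ and $\tau=(\tau_0)_L$: then $X$ is a smooth projective curve of genus $g$ over $L\supseteq K$ with $\Aut(X_{\ol L})=\{\mathrm{Id}\}$ when $g>2$ (geometric automorphisms are unaffected by extension of the base field), $\tau\in\Pic^0_X(L)$ still has exact order $m$, and $\ind(\delta(\tau))=n$, as desired. The step I expect to be the main obstacle is the explicit index computation for the degenerate curve $X_0$: identifying $\partial(\theta_0)$ precisely in terms of the uniformisation and verifying that its index can be made to equal $N$, i.e.\ that the divisibility bound of \S\ref{sec:delta} is sharp. It is here, through the interplay between the genus and the period matrix, that the hypothesis $n\mid g-1$ is seen to be optimal.
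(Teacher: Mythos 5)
There is a genuine gap at the core of your construction. A Mumford curve $X_0=\Omega/\Gamma$ over $K_1=K'((q))$ always has $K_1$-rational points: the limit set of a Schottky group is contained in the union of the $2g$ disks defining it, so $\Omega(K_1)\supseteq\P^1(K_1)\setminus(\text{these disks})\neq\emptyset$. Hence $X_0(F_0)\neq\emptyset$ for $F_0=K_1(t_1,\dots,t_g)$, and by the exact sequence \eqref{seq:relpic} a rational point makes $\Pic(X_0\times_{K_1} F_0)\to\Pic_{X_0}(F_0)$ surjective, so $\delta\equiv 0$ on $\Pic_{X_0}(F_0)$; likewise $\theta_0=[\Pic^1_{X_0}]$ is the class of the \emph{trivial} torsor. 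Moreover, even ignoring this, $\theta_0$ is pulled back from $K_1$ (the curve and $\Pic^1_{X_0}$ are defined over $K_1$), so $\partial(\theta_0)$ lies in the image of $\Br(K_1)\to\Br(F_0)$ and cannot be ``an explicit combination of the classes $q\cup(t_j)$''; it does not involve the $t_j$ at all. Consequently no choice of Schottky data can force $\ind\delta(\tau_0)=N$: with your setup the obstruction is identically zero and the argument only yields the trivial case $n=1$. The final index-reduction step via function fields of generalized Severi--Brauer varieties is reasonable in itself, but there is nothing left to reduce. (Your reduction of $\delta$ on the curve to $\partial$ of the torsor via $j\colon X_0\hookrightarrow\Pic^1_{X_0}$ and Lemma \ref{lem:ext} is fine up to sign, and your $g=1$ and $g=2$ remarks are fine; the failure is in producing a nontrivial class.)

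What is missing is precisely the mechanism the paper uses to manufacture a smooth genus-$g$ curve whose obstruction has prescribed order $m$ and index $n$, and it is done indirectly rather than by writing down the smooth curve. Proposition \ref{prop:ell} first produces a genus-$1$ torsor $P$ \emph{without} rational points: one prepares the base field with Amitsur and Schofield--Van den Bergh so that $(s)\cup(t)$ has order exactly $m$ and index $n$, takes the Tate curve $E$ with parameter $sq^{n}$, passes to $E'=E/\mu_m$, and uses Lemma \ref{lem:ext} to get $\sigma\in\Pic_P(M)$ of order $m$ with $\ind\delta(\sigma)=n$, plus a degree-$n$ extension $M'$ with $P(M')$ infinite. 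Then two copies of $P$ are glued along $(g-1)/n$ closed points of degree $n$ to give a stable curve $Y$ of genus $g$ carrying the same class, and a \emph{generic smoothing} over $R=M[[x]]$ (Raynaud's representability of $\Pic^0_{\mathfrak Y/R}$, \'etaleness of multiplication by $m$ to lift $\sigma'$, and $\Br(R)=\Br(M)$) produces the smooth curve $X$ over $L=M((x))$ with $\Aut(X_{\ol L})=\{Id\}$ for $g>2$ and $\ind\delta(\tau)=n$. If you wish to keep your outline, you must replace the Mumford curve by a genus-$g$ curve with no rational points and controlled obstruction; constructing such a curve directly is not easier than the glue-and-deform argument, which is exactly why the degeneration through the genus-$1$ case is the heart of the proof.
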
 
 
The theorem for all $g$ will be deduced from the slightly stronger 
result below for $g=1$. 
 
\begin{prop} 
\label{prop:ell} 
Let $n>0$ be an integer such that $\chr(K)\nmid n $ and $m>0$ such 
that $m|n$ and $m,n$ have the same prime factors.  Then there exists 
an extension $M$ of $K$, a smooth projective geometrically irreducible 
curve $P$ of genus $1$ over $M$ and an element $\sigma$ of order 
$m$ in $\Pic_P(M)$ such that the index of $\delta(\sigma)$ is 
$n$. Furthermore, there exists an extension $M'$ of $M$ of degree 
$n$ such that $P(M')$ is infinite. 
\end{prop}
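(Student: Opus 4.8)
The plan is to produce the curve $P$ as a torsor under a suitable elliptic curve over a rational function field, by iterating the Tate-curve construction that appeared already in Section \ref{s.Tate}. First I would reduce to the case $m = n$: if $m$ properly divides $n$ but has the same prime factors, one can start from the case $n' = m$ and then extend scalars by a tower of radical extensions to inflate the index from $m$ to $n$ while keeping the order of $\sigma$ equal to $m$ (each step multiplies the index by a prime dividing $m$, using cohomological purity as in the proof of Lemma \ref{l.TateTorsion}). So assume $m = n$ and $\chr(K) \nmid n$. Enlarging $K$, we may also assume $\mu_n \subset K$.

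The main step is the construction. Let $E = E_q$ be the Tate curve over $F = K\dr{q}$, so that for every finite extension $L/F$ we have $E(L) \cong L^{\times}/q^{\ZZ}$ and the $n$-torsion sits in the exact sequence $0 \to \mmu_n \to E[n] \to \ZZ/n \to 0$, with $\partial(1) = q$ in $\H^1(F,\mmu_n) = F^{\times}/(F^{\times})^n$. Now set $M = F(t) = K\dr{q}(t)$ and consider the class $\theta \in \H^1(M, E)$ obtained as the image of the class $q \cup (t) \in \H^2(M,\mmu_n)$ under a chosen splitting, or more directly: take $\theta$ to be the image of $(t) \in \H^1(M, \ZZ/n) = M^{\times}/(M^{\times})^n$ under the connecting map $\H^1(M, \ZZ/n) \to \H^1(M, E)$ coming from the sequence above, analogously to the computation $\partial(t) = q \cup (t)$ in the proof of Lemma \ref{l.TateTorsion}. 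Let $P$ be the $E$-torsor over $M$ representing $\theta$; it is a smooth projective geometrically irreducible genus $1$ curve. For the element $\sigma$, take a generator $\tau$ of the $\mmu_n$ inside $E[n] \subset E = \Pic^0_E$ coming from the subgroup $\mmu_n \hookrightarrow E[n]$; transporting via $\Pic^0_P \cong \Pic^0_E$ gives $\sigma = \tau_P \in \Pic^0_P(M)$ of order exactly $n$. By Lemma \ref{lem:ext}, $\delta(\sigma)$ generates the same subgroup of $\Br(M)$ as $\partial(\theta)$, where $\partial$ is the boundary map for the extension $1 \to \gm \to S \to E \to 0$ classified by $\tau \in \Pic^0_E(M) = \ext^1(E,\gm)$. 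Because $\tau$ generates the image of $\mmu_n$, the pushout of the $n$-torsion sequence $0 \to \mmu_n \to E[n] \to \ZZ/n \to 0$ along $\mmu_n \to \gm$ is precisely $1 \to \gm \to S \to E \to 0$ restricted to $E[n]$, so $\partial(\theta)$ is identified with the cup product $q \cup (t) \in \H^2(M,\mmu_n) \hookrightarrow \Br(M)$. By cohomological purity (injectivity of $\alpha \mapsto \alpha \cup (t)$ over $F(t)$, exactly as used for Lemma \ref{l.TateTorsion}) this class has exponent $n$, and since its symbol presentation shows it is split by the degree-$n$ extension $M(q^{1/n})$, its index is exactly $n$.

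It remains to handle the last sentence: the degree-$n$ extension $M' = M(q^{1/n})$ splits $\partial(\theta)$, hence by Lemma \ref{lem:ext} and the exact sequence \eqref{seq:relpic} it splits $\delta(\sigma)$ as well; but splitting $\partial(\theta)$ means $\theta$ dies in $\H^1(M', E)$, i.e. $P(M') \neq \emptyset$, and since $E(M') \cong (M')^{\times}/q^{\ZZ}$ is infinite, $P(M')$ is infinite. I expect the main obstacle to be bookkeeping: matching up the three boundary maps — the connecting map $\H^1(\ZZ/n) \to \H^1(E)$ for the $n$-torsion sequence, the non-abelian-style map $\delta$ from \eqref{seq:relpic}, and the map $\partial$ from the $\gm$-extension in Lemma \ref{lem:ext} — and checking that under these identifications $\delta(\sigma)$ really is the symbol $q \cup (t)$ rather than merely having the same splitting fields. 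Lemma \ref{lem:ext} gives the equality of generated subgroups for free, so for the index and exponent computation that is all one needs; the identification of the class with an explicit symbol is only needed to invoke purity, and for that the computation $\partial(1) = q$ for the Tate curve together with the projection formula suffices. If one prefers to avoid the explicit symbol, one can instead argue directly that the exponent of $\partial(\theta)$ is $n$ by specialization at $q$, as in Lemma \ref{l.TateTorsion}, which is the cleaner route.
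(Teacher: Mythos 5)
The central gap is your ``reduction to $m=n$''. Since $\delta$ is a homomorphism, an element $\sigma$ of order $m$ has $\delta(\sigma)$ of exponent dividing $m$; so what the proposition really demands, when $m<n$, is a Brauer class of exponent $m$ but index $n$, and that is exactly the hard case (and the case the appendix needs later: Theorem~\ref{thm:double} invokes Theorem~\ref{thm:torsion} with $m=2$, $n=2^b$, $b\geq 2$ whenever $4\mid g-1$). Your proposed fix --- start with the case $n'=m$ and then ``extend scalars by a tower of radical extensions to inflate the index from $m$ to $n$'' --- cannot work: the index of a fixed Brauer class never increases under base field extension, so once $\delta(\sigma)$ has index $m$ over the initial field it has index dividing $m$ over every extension. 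The paper manufactures the exponent-$m$, index-$n$ class by a genuinely different device: take $\alpha=(s)\cup(t)$ over $\overline{K}(s,t)$, pass to the function field $K'$ of the Brauer--Severi variety of $m\alpha$ (Amitsur: the order of $\alpha$ drops to $m$; Schofield--Van den Bergh: its index stays $n$), then run the Tate construction over $M=K'((q))$ with parameter $sq^{n}$ and, crucially, work with the quotient elliptic curve $E'=E/\mu_m$, taking $\sigma$ to be the $M$-rational point of order $m$ coming from $E[m]/\mu_m\cong\ZZ/m\ZZ$. Purity over a rational function field gives you exponent $=$ index and so cannot substitute for this step.

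Even in the case $m=n$ your construction has two defects. First, $\theta$ is not defined as stated: the sequence $0\to\mu_n\to E[n]\to\ZZ/n\to 0$ has connecting map $\H^1(M,\ZZ/n)\to\H^2(M,\mu_n)$, not a map into $\H^1(M,E)$, and the Tate curve $E_q$ has no $M$-rational subgroup $\ZZ/n$ of its $n$-torsion through which to push $(t)$; one must pass to $E'=E/\mu_n$ (where the image of $E[n]$ is a rational $\ZZ/n$) and take $\theta\in\H^1(M,E')$, which is what the paper does via the group $I_n\subset E'$. Second, in the ``Furthermore'' step the inference ``$\partial(\theta)$ splits over $M'$, hence $\theta|_{M'}=0$, i.e.\ $P(M')\neq\emptyset$'' is invalid: the boundary map $\H^1(M',E')\to\Br(M')$ attached to $1\to\GG_m\to S\to E'\to 0$ has kernel the image of $\H^1(M',S)$, so killing the Brauer class does not kill the torsor; moreover $M'=M(q^{1/n})$ is the wrong extension. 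The paper instead takes $M'=M(t^{1/n})$, where $(t)$ itself dies, and disposes of the remaining $\mu_{n/m}$-contribution by Hilbert~90 (the inclusion $\mu_{n/m}\hookrightarrow E'$ factors through $\GG_m$), so that $P_{M'}\cong E'_{M'}$ has infinitely many points. Your identification of $\partial(\theta)$ with the symbol $q\cup(t)$ via the pushout of the torsion sequence is the right idea in spirit (the paper proves the corresponding statement at the level of generated subgroups, which suffices for Lemma~\ref{lem:ext}), but it only becomes meaningful after the construction is corrected as above.
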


\begin{proof} 
  We first replace $K$ by $\ol{K}(s,t)$ where $s,t$ are inderminates. 
  We fix an isomorphism $\mu_n \cong \Z/n \Z$ which we use to identify 
  all $\mu_n^{\otimes i}$, $i \in \Z$. For the elements $(s),(t) \in 
  H^1(K,\mu_n)$ consider $\alpha = (s) \cup (t) \in H^2(K, 
  \mu_{n}^{\otimes 2}) \cong H^2(K, \mu_{n}) = {}_n\Br(K)$. It is well 
  known and easy to see that this element of $\Br(K)$ has both order 
  and index equal to $n$.  Let $K'$ be the function field of the 
  Brauer--Severi variety corresponding to the division algebra over 
  $K$ representing $m\alpha$. By a theorem of Amitsur \cite[Theorem 
  9.3]{amitsur-generic} the image of $\alpha$ in $\Br(K')$ has order 
  $m$ and by a theorem of Schofield and Van den Bergh \cite[Theorem 
  2.1]{schofield-bergh} its index is still $n$. 
 
  Let $M$ be the field of Laurent series $K'((q))$ and let $E$ be the 
  Tate elliptic curve over $M$ associated to the element $sq^{n} 
  \in M^{\times}$. For any finite extension $M'$ of $M$ there is a 
  canonical Galois equivariant isomorphism 
  \[ 
  E(M') \cong {M'}^{\times}/\langle sq^{n}\rangle \ . 
  \] 
  From this we get a canonical exact sequence 
  \[ 
  1 \to \mu_{n} \to E[n] \to \Z/n \Z \to 0 \ 
  \] 
  where $1 \in \Z/n \Z$ is the image of any $n$'th root of $sq^{n}$ in 
  $\ol{M'}$. 
  For any $\phi \in H^1(M, \Z/ n \Z)$, one easily checks using the 
  definitions that $\partial(\phi) \in H^2(M, \mu_n) = H^2(M, \mu_n 
  \otimes \Z/n\Z)$ is equal to $(s) \cup \phi$, where $\partial$ 
  denotes the boundary map in the long exact sequence of Galois 
  cohomology associated to the above short exact sequence of Galois 
  modules. It follows that if we identify $(t) \in H^1(M, \mu_{n})$ 
  with an element of $H^1(M, \Z/n \Z)$ using our chosen isomorphism 
  $\Z/n \cong \mu_{n}$, then $\beta :=\partial((t)) = (s) \cup (t)\in 
  H^2(K, \mu_{n}) \subset \Br(M)$.  Thus $\beta$ also has order $m$ 
  and index $n$ (since the index is the smallest dimension of a linear 
  subvariety of the Brauer--Severi variety and such varieties are 
  preserved by specialisation). In particular, it is in the image of 
  the inclusion map $H^1(M, \mu_{m}) \to H^1(M,\mu_{n})$.

  Let $E'$ be the quotient of $E$ by $\mu_m$, so $E'$ is also an elliptic 
  curve over $M$. Let $I_n \subset 
  E'[n]$ be the image of $E[n]$, so we have  exact sequences 
  \[ 
  1 \to \mu_m \to E[n] \to I_n \to 0 \ \mbox{ and } 
  1 \to \mu_{n/m} \to I_n \to \Z/n \to 0 \ . 
  \] 
  By construction, the boundary map of the second sequence maps the 
  element $(t) \in H^1(M,\Z/n)$ to $0$ in 
  $H^1(M,\mu_{n/m})$, hence $(t)$ lifts to an element $\gamma 
  \in H^1(M,I_n)$. Clearly $\gamma$ is mapped to $\beta \in 
  H^2(M,\mu_m)$ by the boundary map of the first exact sequence.

  Now let $M' = M(t^{1/n}) = K'(t^{1/n})((q))$. The 
  restriction of $\gamma$ in $H^1(M', I_n)$ goes to $0$  
  in  $H^1(M', \Z / n \Z)$ by 
  construction, hence it comes from an element of $H^1(M', 
  \mu_{n/m})$.  We have a commutative diagram 
  \[ 
  \xymatrix{ 
    H^1(M',\mu_{n}) \ar[r] \ar[d] &  H^1(M', E) \ar[d] \\ 
    H^1(M',\mu_{n/m}) \ar[r] & H^1(M',E') 
} 
\] 
where the vertical maps are induced by quotienting by $\mu_m$.  The 
first vertical map is surjective and the inclusion $\mu_{n} \to 
E(\ol{M'})$ factors as $\mu_{n} \to \ol{M'}^{\times} \to 
E(\ol{M'})$, so it follows from Hilbert's Theorem 90 that the bottom 
horizontal map is zero. Therefore $\theta$, the image of 
$\gamma$ in $H^1(M,E')$, restricts to $0$ in $H^1(M',E')$. 
 
Let $P$ be the $E'$-torsor corresponding to $\theta$, so $\Pic^0_P$ is 
canonically isomorphic to $E'$. The image of $E[m]$ in $E'$ is 
naturally isomorphic to $\Z/m\Z$; let $\sigma$ denote $1 \in \Z/ m \Z 
\subset E'(M) = \Pic_P(M)$. Pushing out the exact 
sequence 
\[ 
1 \to \mu_m \to E \to E' \to 0 
\] 
via the inclusion $\mu_m \to \G_m$ we get an exact sequence 
\[ 
1 \to \G_m \to S \to E' \to 0 
\] 
whose class in $Ext^1(E', \G_m)$ generates the kernel of the map 
$Ext^1(E', \G_m) \to Ext^1(E, \G_m)$. Under the canonical isomorphisms 
$Ext^1(E', \G_m) \cong \Pic^0_{E'}(M) \cong E'(M)$, $1 \in \Z/m \Z 
\subset E'(M)$ is a generator of the above kernel, so it follows that the 
two elements generate the same subgroup of $Ext^1(E', \G_m)$. 
 
It now follows from Lemma \ref{lem:ext} that $\delta(\sigma)$ and $\beta$ 
generate the same subgroup of $\Br(M)$; in particular, 
$\delta(\sigma)$ has index $n$.  Since $\theta$ becomes $0$ in 
$H^1(M',E')$, it follows that $P_{M'} \cong E'_{M'}$. Since $E'(M)$ is 
infinite, so is $E(M')$ and therefore also $P(M')$ 
 
We conclude that $M$, $P$, $\sigma$ and $M'$ satisfy all the 
conditions of the proposition. 
\end{proof}

\begin{proof}[Proof of Theorem \ref{thm:torsion}] 
  If $g=1$ the result follows from Proposition \ref{prop:ell} so we may 
  assume that $g>1$. 
 
 
  Let $r = g-1/n$ and let $M$, $P$, $\sigma$ and $M'$ be as in 
  Proposition \ref{prop:ell}.  Note that since the index of 
  $\delta(\sigma)$ is $n$, any closed point of $P$ must have degree 
  divisible by $n$.  Let $p_1,p_2,\dots, p_r$ be distinct closed 
  points of $P$ of degree $n$ and let $Y$ be the stable curve over $M$ 
  obtained by gluing two copies of $P$ along all the $p_i$'s, i.e. the 
  $p_i$ in one copy is identified with the $p_i$ in the other copy 
  using the identity map on residue fields. The arithmetic genus of 
  $Y$ is $ 1 + 1 + rn - 1 = 1 + rn = g$.  Using the natural map $\pi: 
  Y \to P$ which is the identity on both components, we get a morphism 
  $\pi^*: \Pic_P \to \Pic_Y$ and we let $\sigma' = \pi^*(\sigma) \in 
  \Pic_Y(M)$.  Note that $\delta(\sigma) = \delta(\sigma') \in 
  \Br(M)$. 
 
  Let $R = M[[x]]$ and let $f:\mf{Y} \to \Spec \ R$ be a generic 
  smoothing of $Y$. So $\mf{Y}$ is a regular scheme and $f$ is a flat 
  proper morphism with closed fibre equal to $Y$ (see for example 
  \cite[Section 1]{deligne-mumford}). By a theorem of Raynaud 
  \cite[Th\'eor\`eme 8.2.1]{raynaud-picard}, $\Pic^0_{\mf{Y}/R}$ is 
  representable by a separated and smooth group scheme of finite type 
  over $R$.  Since $\chr(M) \nmid m$, the endomorphism of 
  $\Pic^0_{\mf{Y}/R}$ given by multiplication by $m$ is etale. Since 
  $R$ is complete, it follows that $\sigma'$ can be lifted to an 
  element $\boldsymbol{\sigma}$ in $\Pic^0_{\mf{Y}/R}(R)$ of order 
  $m$. 
   
  Consider $\delta(\boldsymbol{\sigma}) \in \Br(R)$. Since $\Br(R) = 
  \Br(M)$, we see by the functoriality of the exact sequences in 
  (\ref{seq:relpic}) that $\delta(\boldsymbol{\sigma}) = \delta(\sigma')= 
  \delta(\sigma)$. 
   
  Now let $L = M((x))$, let $X$ be the generic fibre of $f$ and let 
  $\tau$ be the restriction of $\boldsymbol{\sigma}$ in $\Pic^0_X(L)$; 
  by the genericity of the deformation it follows that 
  $Aut(X_{\ol{L}}) = \{Id\}$ if $g>2$. Again by the functoriality of 
  the exact sequences in \eqref{seq:relpic} we see that $\delta(\tau)$ 
  is the image of $\delta(\boldsymbol{\sigma}) = \delta(\sigma)$ in 
  $\Br(L)$.  Thus $\delta(\tau)$ has index $n$ as required. 
\end{proof} 
 
Theorem \ref{thm:double} is a simple consequence of Theorem 
\ref{thm:torsion}. 
\begin{proof}[Proof of Theorem \ref{thm:double}] 
  Since $\mc{R}_g$ is a smooth irreducible Deligne--Mumford stack of 
  dimension $3g -3$, it follows from Theorem~\ref{thm:generic} that to 
  compute $\ed \mc{R}_g$ when $\mc{R}_g$ is tame it suffices to 
  compute the index of the generic gerbe. 
 
  The coarse moduli space $\mf{R}_g$ of $\mc{R}_g$ is generically a 
  fine moduli space parametrizing smooth projective curves $X$ of 
  genus $g$ over $S$ with a non-trivial element of order $2$ of 
  $\Pic_{X/S}(S)$.  Thus over the generic point $\Spec \ K(\mf{R}_g) 
  \in \mf{R}_g$ we have a smooth projective curve $C$ of genus $g$ and 
  an element $\sigma\in \Pic_C(K(\mf{R}_g))$ of order $2$.  It follows 
  that the element of $\Br(K(\mf{R}_g))$ represented by the generic 
  gerbe of $\mc{R}_g$ is the obstruction to the existence of a line bundle 
  $\mc{L}$ over $C$ whose class in $\Pic_C(K(\mf{R}_g))$ is equal to 
  $\sigma$. 
 
  If $b=0$, then $g-1$ is odd hence the generic gerbe is trivial. So 
  assume $b>0$ and let $X$, $L$ and $\sigma $ be obtained by applying 
  Theorem \ref{thm:torsion} with $m=2$ and $n=2^b$.  Since 
  $Aut(X_{\ol{L}}) = \{id\}$ it follows that the image of the map 
  $\Spec \ L \to \mf{R}_g$ lies in the smooth locus $\mf{R}_g'$ of 
  $\mc{R}_g$. Since the restriction of the map $\mc{R}_g \to \mf{R}_g$ 
  is a $\mu_2$ gerbe, it follows that the index of the generic gerbe 
  is $\geq 2^b$. Since the index must also divide $g-1$ it follows 
  that we must have equality as claimed. 
 
\end{proof} 
 
\subsection{The essential dimension of $\mc{A}_1$ over arbitrary fields} 
\label{sec:a1} 
 
We do not know the essential dimension of $\mc{A}_g$ over fields 
of small characteristic. However, 
it follows from classical 
formulae~\cite[Appendix A, Proposition 1.1]{Silverman} 
that $\ed \mc{A}_1 = 2$ over any field of 
characteristic $\neq 2$ and $\ed \mc{A}_1 \leq 3$ over any field of 
characteristic $2$. We prove here the following 
\begin{thm} 
\label{thm:eda1} 
$\ed \mc{A}_1 = 2$ over any field of characteristic $2$. 
\end{thm}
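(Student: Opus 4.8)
The plan is to identify $\mathcal{A}_1$ with the moduli stack $\mathcal{M}_{1,1}$ of elliptic curves and prove the two inequalities $\ed_k\mathcal{A}_1 \le 2$ and $\ed_k\mathcal{A}_1 \ge 2$ directly, by manipulating Weierstrass equations. The genericity theorem~\ref{thm:generic} is of no help here, since $\mathcal{M}_{1,1}$ is not tame in characteristic $2$: the automorphism group of every elliptic curve contains $[-1]$, hence has even order.

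For the upper bound, let $E$ be an elliptic curve over a field extension $K/k$; I must produce a model of $E$ over a subfield of $K$ generated over $k$ by at most two elements. Using the normal forms for Weierstrass equations in characteristic $2$ (see~\cite[Appendix A, Proposition 1.1]{Silverman}), split into cases according to $j(E)$. If $j(E)\ne 0$ (ordinary case), $E$ has a model $y^2+xy=x^3+a_2x^2+a_6$ with $a_6=1/j(E)$, so it descends to $k(a_2,a_6)$. If $j(E)=0$ (supersingular case), $E$ has a model $y^2+a_3y=x^3+a_4x+a_6$ with $a_3\ne 0$; this has three coefficients, and the crux is to collapse one of them. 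If $a_4=0$ the model already descends to $k(a_3,a_6)$. If $a_4\ne 0$, apply the admissible change of variables $x=u^2X$, $y=u^3Y$ with $u=a_4/a_3\in K^\times$, which yields $Y^2+\beta Y=X^3+\beta X+\gamma$ with $\beta=a_3^4/a_4^3$ and $\gamma=a_6a_3^6/a_4^6$ --- a model over $k(\beta,\gamma)$. In every case the field of definition has transcendence degree at most $2$.

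For the lower bound I will exhibit one elliptic curve of essential dimension $2$. Over $K=k(s,t)$ take $E=E_{s,t}\colon y^2+xy=x^3+sx^2+t$; it is ordinary with $j=1/t$, and it is the quadratic (Artin--Schreier) twist, by the class of $s$ in $H^1(K,\ZZ/2)=K/\wp K$ (where $\wp(x)=x^2+x$), of the curve $E_{0,t}$, which is defined over $k(t)$. Suppose $E$ descended to a curve $E'$ over a subfield $K_0\subseteq K$ with $\trdeg_k K_0\le 1$. Then $1/t\in K_0$, so $K_0$ is algebraic over $k(t)$; since $k(t)$ is algebraically closed in the rational function field $K=k(t)(s)$, this forces $K_0=k(t)$. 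Hence $E'$ is a twist of $E_{0,t}$ over $k(t)$ by some class $[c]$ with $c\in k(t)$, and comparing the twist classes over $K$ gives $s+c\in\wp K$, say $s+c=f^2+f$ with $f\in K$. Now evaluate the valuation $v=-\deg_s$ on $k(t)(s)$: since $s\notin k(t)$ we have $v(s+c)=-1$, whereas $v(f^2+f)\ne -1$ for every $f\in K$ (if $v(f)\ge 0$ then $v(f^2+f)\ge 0$, and if $v(f)<0$ then $v(f^2+f)=2v(f)$ is even). This contradiction gives $\ed_k E_{s,t}=2$, hence $\ed_k\mathcal{A}_1\ge 2$.

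The main obstacle is the supersingular part of the upper bound. In characteristic $\ne 2$ every elliptic curve has the two-parameter model $y^2=x^3+ax+b$ and the bound is immediate; in characteristic $2$ the supersingular curves genuinely need three Weierstrass coefficients, and the content is the rescaling $u=a_4/a_3$ that merges two of them. Equivalently, this is the assertion that the essential dimension of the (form of the) automorphism group $\mathrm{SL}_2(\FF_3)$ of the supersingular curve is at most $2$ in characteristic $2$ --- a fact not visible from any linear representation. I should also verify the routine book-keeping: that the Weierstrass normalizations used are valid over an arbitrary field of characteristic $2$, and that $\mathrm{Aut}(E_{0,t})=\ZZ/2$ as a $K$-group scheme, so that the twist set is genuinely $K/\wp K$.
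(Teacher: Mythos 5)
Your proof is correct, but for the crucial supersingular case it takes a genuinely different route from the paper, and it also supplies a detail the paper omits. The paper splits by $j$-invariant exactly as you do and uses the same two-parameter normal form when $j\neq 0$; but for $j=0$ it does not manipulate Weierstrass equations. Instead it reduces to the base field $\FF_2$, identifies the supersingular locus with the (neutral) residual gerbe $\cB\Aut(E)$ at $E\colon y^2+y=x^3$, and bounds $\ed\Aut(E)\le 2$ group-theoretically: $\Aut(E)$ is an \'etale group scheme of order $24$ with center $\ZZ/2$, the quotient $G$ embeds into $\PGL_2$ (so $\ed G=1$), and since $\H^2(K,\ZZ/2)=0$ in characteristic $2$ the map $\H^1(K,\Aut(E))\to \H^1(K,G)$ is surjective with $\H^1(K,\ZZ/2)$ acting on its fibers, whence $\ed\Aut(E)\le 1+1=2$. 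Your rescaling $u=a_4/a_3$, fusing $a_3$ and $a_4$ into the single parameter $\beta=a_3^4/a_4^3$, is an explicit, coordinate-level substitute for that cohomological argument: the substitution $x=u^2X$, $y=u^3Y$ sends $a_i$ to $a_i/u^i$, your formulas for $\beta$ and $\gamma$ are right, and $\beta\neq 0$ keeps the model nonsingular, so every $j=0$ curve does descend to transcendence degree $\le 2$. This is shorter, needs no knowledge of the structure of $\Aut(E)$, and works over any base field of characteristic $2$ without the reduction to $\FF_2$; what the paper's route buys in exchange is the structural statement $\ed\cB\Aut(E)\le 2$ for this form of $\SL_2(\FF_3)$, which is of independent interest and fits the gerbe-theoretic framework of the rest of the paper. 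For the lower bound the paper only asserts that $\ed\mathcal{A}_1\ge 2$ over any field is ``easy to see''; your Artin--Schreier twist $y^2+xy=x^3+sx^2+t$ over $k(s,t)$, the reduction to $K_0=k(t)$ via the $j$-invariant and algebraic closedness of $k(t)$ in $k(t)(s)$, and the parity-of-valuation argument ruling out $s+c\in\wp K$ constitute a complete and correct justification; the facts you flag for verification (the normal forms of \cite[Appendix A, Proposition 1.1]{Silverman} hold over an arbitrary field of characteristic $2$, and the automorphism group scheme of an ordinary curve is the constant $\ZZ/2$, so twists with fixed $j\neq 0$ are classified by $K/\wp K$ via $a_2\mapsto a_2+c$) are indeed standard and hold as stated.
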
 
 
\begin{proof} 
It suffices to prove the theorem over $\F_2$ since it is easy to see 
that $\ed \mc{A}_1 \geq 2$ over any field.  
 
Any elliptic curve $E$ over a field $K$ of characteristic $2$ with 
$j(E) \neq 0$ has an affine equation \cite[Appendix A]{Silverman} 
\[ 
y^2 + xy = x^3 + a_2x^2 + a_6, \ \ \ a_2, 0 \neq a_6 \in K \ , 
\] 
hence it suffices to compute the essential dimension of the residual 
gerbe corresponding to elliptic curves $E$ with $j(E) = 0$. Any such 
curve has an affine equation 
\[ 
y^2 + a_3y = x^3 + a_4x + a_6, \ \ a_3 \neq 0,a_4,a_6 \in K \ . 
\] 
We let $E$ be the curve corresponding to the equation $y^2 + y = x^3$ 
over $\F_2$ and denote by $\Aut(E)$ its automorphism group scheme. 
 
By \cite[Appendix A, Proposition 1.2]{Silverman} and its proof, 
$\Aut(E)$ is an etale group scheme over $\F_2$ of order $24$. As a 
scheme it is given by the equations $U^3 = 1$, $S^4 + S = 0$ and $T^2 
+ T = 0$, where $U,S,T$ are coordinates on $\A^3$. Given a solution 
$(u,s,t)$ of these equations, the corresponding automorphism $E \to E$ 
is given in the above coordinates by $(x,y) \mapsto (x',y')$ with $x = 
u^2x' + s^2$ and $y = y' + u^2sx' + t$. Thus, if $f_i:E \to E$, 
$i=1,2$, over a field $K$ is given by a tuple $(u_i,s_i,t_i)$ then 
$f_2 \circ f_1:E \to E$ is given by the coordinate change 
\[ 
x = u_1^2x_1  + s_1^2 = u_1^2(u_2^2x_2 + s_2^2) + s_1^2 = (u_1u_2)^2x_2 + (u_1s_2 + s_1)^2 
\] 
and  
\begin{multline*} 
y = y_1 + u_1^2s_1x_1 + t_1 = (y_2 + u_2^2s_2x_2 + t_2) + u_1^2s_1(u_2^2x_2 + s_2^2) + t_1 \\ 
= y_2 + (u_1u_2)^2(u_1s_2 + s_1)x_2 + (t_1 + u_1^2s_1s_2^2 + t_2) \ . 
\end{multline*} 
Thus $f_2 \circ f_1$ corresponds to the triple $(u_1u_2, u_1s_2 + 
s_1,t_1 + t_2 + u_1^2s_1s_2^2 )$. 
 
Clearly $\Aut(E)$ becomes a constant group scheme over any field 
containing $\F_4$; one may see that this constant group scheme is 
isomorphic to $SL_2(\F_3)$ by considering its action on $E[3]$. The 
centre of $\Aut(E)$ is the constant group scheme $\Z/2$, the 
non-trivial element corresponds to the tuple $(1,0,1)$ and acts by 
multiplication by $-1$ on $E$. Let $G$ be the quotient of $\Aut(E)$ by 
its centre. It is given by the equations $U^3 = 1, S^4 + S = 0$ and 
the quotient map corresponds to forgetting the last coordinate. 
 
Let $B \subset SL_2(\F_4)$ be the subgroup of upper triangular 
matrices, viewed as a closed subgroup scheme of $SL_{2,\F_2}$ in the 
natural way.  The formula for compostion in $\Aut(E)$ given above 
shows that the map on points $G \to B$ given by $(u,s) \mapsto \bigl 
( \begin{smallmatrix} u & us \\ 0 & u^2 \end{smallmatrix} \bigr )$ 
induces an isomorphism of group schemes over 
$\F_2$.  Thus $G$ is a closed subgroup scheme of $GL_{2,\F_2}$ which 
maps injectively into $PGL_{2,\F_2}$, so $\ed G = 1$. 
 
Now we have a central extension of group schemes over $\F_2$, 
\[ 
0 \to \Z/2 \to \Aut(E) \to G \to 1 \ , 
\] 
which for any extension field $K$ of $\F_2$ gives rise to an exact 
sequence of pointed sets 
\[ 
H^1(K,\Z/2) \sr{\alpha}{\to} H^1(K,\Aut(E)) \sr{\beta}{\to} H^1(K,G) 
\sr{\partial}\to H^2(K,\Z/2) . 
\] 
Since $H^2(K,\Z/2) = 0$ it follows that $\beta$ is surjective. Thus 
$H^1(K,\Z/2)$ operates on $H^1(K,\Aut(E))$ and the quotient is 
$H^1(K,G)$ by \cite[III, Proposition 3.4.5 (iv)]{G}. Since both 
$\Z/2$ and $G$ have essential dimension $1$, it follows that $\ed 
\Aut(E) \leq 2$. 
 
The residual gerbe at the point $E$ of $\mc{A}_1$ is neutral, so it is 
isomorphic to $\mc{B} \Aut(E)$, hence has $\ed \leq 2$. Since the 
generic gerbe is isomorphic to $\mc{B}\ \Z/2\Z$, we conclude that $\ed 
\mc{A}_1 = 2$. 
\end{proof} 
 
 
\bigskip 
 
\noindent {\bf Acknowledgements.} 
I 
thank Arvind Nair and Madhav Nori for 
useful conversations. 
 
 
 
 
\bibliographystyle{amsalpha}
\bibliography{ed}
\end{document}